\newtheorem{theorem}{Theorem}[section]
\newtheorem{definition}[theorem]{Definition}
\newtheorem{lemma}[theorem]{Lemma}
\newtheorem{remark}[theorem]{Remark}
\newtheorem{proposition}[theorem]{Proposition}
\newcommand{\vep}{\varepsilon}
\renewcommand{\d}{\text{\rm d}}
\newcommand{\essinf}{\mathop{\rm ess~inf~}}
\newcommand{\esssup}{\mathop{\rm ess~sup~}}
\begin{document}

\title[Doubly nonlinear evolution equations]{Subdifferential calculus and 
 doubly nonlinear evolution equations in $L^p$-spaces with variable exponents}

\author{Goro Akagi}
\address[Goro Akagi]{Graduate School of System Informatics, Kobe
University,  1-1 Rokkodai-cho, Nada-ku, Kobe 657-8501, Japan.}
\email{akagi@port.kobe-u.ac.jp}

\author{Giulio Schimperna}
\address[Giulio Schimperna]{Dipartimento di Matematica, Universit\`a di
Pavia, Via Ferrata, 1, I-27100 Pavia, Italy.}
\email{giusch04@unipv.it}

\date{\today}

\thanks{G.~A.~is supported by JSPS KAKENHI Grant Number 25400163,
and by  Hyogo Science and Technology Association, and by Nikko Co.~Ltd.
 Both authors are also supported by the JSPS-CNR bilateral joint
 research project \emph{VarEvol: Innovative Variational methods for
 Evolution PDEs}}

\begin{abstract}
 This paper is concerned with the Cauchy-Dirichlet problem for a doubly
 nonlinear parabolic equation involving variable exponents and provides
 some theorems on existence and regularity of strong
 solutions. In the proof of these results, we also analyze the relations
 occurring between Lebesgue spaces of space-time variables and 
 Lebesgue-Bochner spaces of vector-valued functions,
 with a special emphasis on measurability issues
 and particularly referring to the case of space-dependent 
 variable exponents.
 %
 %
 Moreover, we establish a chain rule for (possibly nonsmooth) 
 convex functionals defined on variable exponent spaces.
 Actually, in such a peculiar functional setting the
 proof of this integration formula is nontrivial and 
 requires a proper reformulation of some basic concepts
 of convex analysis, like those of resolvent, of Yosida 
 approximation, and of Moreau-Yosida regularization.
\end{abstract}

\keywords{Doubly nonlinear evolution equation; variable exponent
 Lebesgue space; Bochner space; subdifferential.}

\subjclass[2010]{Primary: 35K92, 47J35; Secondary: 46E30.}

\maketitle
\pagestyle{myheadings}


\section{Introduction}

Nonlinear parabolic equations of the form
\begin{equation}\label{dnp-1}
\beta(\partial_t u) - \Delta u = f \ \mbox{ in } \Omega \times (0,T),
\end{equation}
with a maximal monotone graph $\beta : \mathbb R \to \mathbb R$, a domain
$\Omega$ of $\mathbb R^N$, and a given function $f = f(x,t): \Omega
\times (0,T) \to \mathbb R$, have been studied in various contexts (see,
e.g.,~\cite{Visintin96}). The linear \emph{Laplacian} is often replaced
with nonlinear variants such as the so-called $m$-\emph{Laplacian}
$\Delta_m$ given by
$$
\Delta_m u = \mathrm{div}~ \left(|\nabla u|^{m-2}\nabla u \right), \quad
1 < m < \infty.
$$
In that case, the equation above is called a \emph{doubly nonlinear}
parabolic equation. Very often,  by  setting $u(t) := u(\cdot,t)$,
such a nonlinear parabolic equation is interpreted as an abstract 
evolution equation, i.e., an ordinary differential equation in an
infinite-dimensional space $X$. Namely, one has
\begin{equation}\label{dne}
A(u'(t)) + B(u(t)) = f(t) \ \mbox{ in } X, \quad 0 < t < T,
\end{equation}
with unknown function $u : (0,T) \to X$, two (possibly nonlinear)
operators $A, B$ in $X$, and $f : (0,T) \to X$.
Therefore, it is natural to build the
existence and regularity theory for~\eqref{dne}
in some class of vector-valued functions, like
the Lebesgue-Bochner space $L^p(0,T;X)$. 

Indeed, \eqref{dnp-1} has been studied mostly 
by following two lines: the first one has been originally
developed by Barbu~\cite{B75}, Arai~\cite{Arai} and Senba~\cite{Senba}, 
who analyze \eqref{dne} in a Hilbert space $L^2(0,T;H)$
($H$ denoting here a Hilbert space of functions of
space variables, like for instance $H=L^2(\Omega)$).
Their methods is based on a time differentiation of~\eqref{dne},
which transforms it into another (more tractable) 
type of doubly nonlinear equation,
as well as on some peculiar monotonicity condition, which is, roughly
speaking, formulated by asking that
\begin{equation}\label{cl-hypo1}
\left( Bu - Bv, A(u-v) \right)_H \geq 0,
\end{equation}
where $(\cdot,\cdot)_H$ denotes the inner product of $H$, along with the
homogeneity of $A$.
The other approach has been initiated by Colli-Visintin~\cite{CV} and
Colli~\cite{Colli}, and it relies on the assumption of a power growth for the
maximal monotone operator $A : X \to X^*$ (here $X$ is a Banach space,
for example $X = L^p(\Omega)$), e.g., $p$-power growth given as
\begin{equation}\label{cl-hypo2}
|Au|_{X^*}^{p'} \leq C(|u|_X^p + 1), \quad c_0 |u|_X^p \leq \langle Au,
u \rangle_X + C 
\end{equation}
with constants $c_0 > 0$, $C \geq 0$ and $p' := p/(p-1)$.
Particularly, in~\cite{Colli}, 
equation~\eqref{dne} is analyzed in the Banach space $L^p(0,T;X)$.
For other results on doubly nonlinear equations of the form \eqref{dne},
the reader is referred 
to~\cite{AiziYan,GO3,GradFlow,AsFrK,BlaDam,MiTh,MiRo,MiRoSa,Roubicek,SSS,Segatti,S-ken}
and references therein.

In this paper, we are concerned with the following Cauchy-Dirichlet
problem for doubly nonlinear parabolic equations with \emph{variable
exponents}:
\begin{alignat}{3}
 |\partial_t u|^{p(x)-2} \partial_t u - \Delta_{m(x)} u &= f(x,t)
\quad &\mbox{ in }& \ Q := \Omega \times (0,T), \label{pde}\\
u &= 0 \quad &\mbox{ on }& \ \partial \Omega \times (0,T), \label{bc}\\
u(\cdot,0) &= u_0 &\mbox{ in }& \ \Omega, \label{ic}
\end{alignat}
where $1 < p(x), m(x) < \infty$ are variable exponents and
$\Delta_{m(x)}$ stands for the \emph{$m(x)$-Laplacian} given by
$$
\Delta_{m(x)} u
= \mathrm{div}~ \left(
|\nabla u|^{m(x)-2} \nabla u
\right).
$$
The constant exponent case, i.e., $p(x) \equiv p$, $m(x) \equiv m$, can be
treated within the classical frame mentioned above by making
appropriate assumptions. However, the variable exponent case presents
a number of peculiarities which do not permit to apply the standard
theory. Indeed, due to the $x$-dependence of
variable exponents, it is difficult to check the monotonicity
condition \eqref{cl-hypo1} used in the first approach (e.g.,~\cite{Arai})
without taking extra assumptions (e.g., a smooth dependence
of the variable exponent with respect to the space variables).
On the other hand, differently from \cite{CV} and~\cite{Colli},
here the operator $A : u \mapsto |u(\cdot)|^{p(\cdot)-2}u(\cdot)$ 
with the variable exponent $p(x)$ does not satisfy any 
mathematically tractable $p$-growth 
condition \eqref{cl-hypo2}, because the 
growth order is inhomogeneous over $\Omega$. 
For all these reasons, a proper functional setting for 
addressing Problem~\eqref{pde}--\eqref{ic} seems still to be lacking,
and, actually, the main aim of this paper consists exactly in
the construction of such a framework.

In order to understand which are our main ideas, we have 
to focus on the boundedness of the operator $\mathcal A : u \mapsto |u|^{p(x)-2}u$ 
from $L^{p(x)}(Q)$ to the dual space $L^{p'(x)}(Q) = (L^{p(x)}(Q))^*$, 
where $L^{p(x)}(Q)$ and $L^{p'(x)}(Q)$ stand for variable exponent 
Lebesgue spaces of space-time variables over 
the set $Q = \Omega \times (0,T)$ with $p'(x) := p(x)/(p(x) - 1)$. 
Then, it is easy to notice that this boundedness cannot be
formulated in terms of any Lebesgue-Bochner space of vector-valued
functions with no loss of integrability, due to the presence of 
variable exponents. To say it in simple words, the identification 
$L^{p(x)}(Q) \sim L^{p(x)}(0,T;L^{p(x)}(\Omega))$,
which is standardly used for constant exponents $p\in[1,\infty)$,
turns out to be meaningless in the variable-exponent setting; actually,
once one tries to embed $L^{p(x)}(Q)$ into some vector-valued function 
space of the time variable, a loss of integrability occurs. 
For this reason, we are obliged to address equation \eqref{pde}
by mainly working in the space $L^{p(x)}(Q)$, which
plays a critical role as far as we need to exploit 
the fine properties of the operator $\mathcal A$. 
This functional setting forces us to pay attention 
to the different \emph{measures} and \emph{measurability}
concepts characterizing Lebesgue spaces of space-time
variables and Lebesgue-Bochner spaces of vector-valued
functions, particularly in the variable-exponent case.
In addition to this, the monotone structure
of the nonlinear operators appearing in equation~\eqref{pde}
has to be properly managed in the setting of the space 
$L^{p(x)}(Q)$. Indeed, while for constant exponents
any (maximal) monotone operator acting on $L^p(\Omega)$ 
can be extended to the time-dependent space $L^p(Q)$
(or, equivalently, to $L^p(0,T;L^p(\Omega))$)
in a  straightforward  way, here this procedure is far
from being obvious because space and time variables
cannot be ``decoupled'' in the definition of $L^{p(x)}(Q)$.
In order to overcome this problem, we have to revise
some concepts in the theory of monotone operators 
and of subdifferentials and adapt them to the variable
exponent setting. In particular, we need to properly
modify the notions of Yosida approximation for monotone
operators and of Moreau-Yosida regularization for convex
functionals. This permits us to prove a chain rule for 
subdifferentials, extending the classical result
\cite[Lemme~3.3, p.~76]{HB1}) to the space $L^{p(x)}(Q)$
(cf.~Prop.~\ref{P:chain} below). This chain rule will
play a crucial role in the existence proof for
\eqref{pde}--\eqref{ic}.

It is worth noting that the functional framework 
and the convex analysis tools developed in the present paper 
could also be applied to more general classes of doubly nonlinear parabolic
equations, like for instance
$$
\beta(x,\partial_t u) - \mathrm{div} ~{\bf a}(x,\nabla u) = f,
$$
with $x$-dependent maximal monotone graphs $\beta(x,\cdot)$ in $\mathbb
R$ and ${\bf a}(x,\cdot)$ in $\mathbb R^N$ under $p(x)$- and
$m(x)$-growth conditions on $\beta(x,\cdot)$ and ${\bf a}(x,\cdot)$,
respectively, at each point $x \in \Omega$.
%

Prior to stating main results, let us exhibit our basic assumptions (H):
\begin{align}
 m \in \mathcal P_{\log}(\Omega), \quad
 p \in \mathcal P(\Omega), \quad
 1 < p^-, m^-, p^+, m^+ < \infty, \label{H1}\tag{H1}\\
 \displaystyle \essinf_{x \in \Omega} \left( m^*(x) - p(x) \right) > 0,
 \label{H2}\tag{H2}\\
 f \in L^{p'(x)}(Q), 
 \quad u_0 \in W^{1,m(x)}_0(\Omega), \label{H3}\tag{H3}
\end{align}
where $\mathcal P_{\log}(\Omega)$ (resp., $\mathcal P(\Omega)$) stands
for the set of log-H\"older continuous (resp., measurable) exponents $1
\leq p(x) \leq \infty$ over $\Omega$, $p^- := \essinf p(x)$, $p^+
:= \esssup p(x)$, $m^\pm$ are defined analogously for $m(x)$, and
$m^*(x) := (Nm(x))/(N-m(x))_+$ (see \S \ref{S:P} below for more details).

We are concerned with solutions of \eqref{pde}--\eqref{ic} defined in
the following sense:
\begin{definition}[Strong solutions]\label{D:ssol}
 We call $u \in L^{p(x)}(Q)$ a \emph{strong solution} of
 \eqref{pde}--\eqref{ic} in $Q$ whenever the following conditions hold
 true\/{\rm :}
\begin{enumerate}
 \item $t \mapsto u(\cdot,t)$ is continuous with values in
       $L^{p(x)}(\Omega)$ on $[0,T]$ and is weakly
       continuous with values in $W^{1,m(x)}_0(\Omega)$ on $[0,T]$,
 \item $\partial_t u \in L^{p(x)}(Q)$, $\Delta_{m(x)} u \in
       L^{p'(x)}(Q)$,
 \item equation \eqref{pde} holds for a.e.~$(x,t) \in Q$, 
 \item the initial condition \eqref{ic} is satisfied for a.e.~$x \in \Omega$.
\end{enumerate} 
\end{definition}
Now, our result reads
\begin{theorem}[Existence of strong solutions]\label{T:ex}
 Assume {\rm (H)}. Then the Cauchy-Dirichlet problem
 \eqref{pde}--\eqref{ic} admits (at least) one strong
 solution $u$ in the sense of Definition {\rm \ref{D:ssol}}.
\end{theorem}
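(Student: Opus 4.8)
The plan is to construct solutions via a time-discretization (implicit Euler) scheme, combined with the variational structure of the problem, and then pass to the limit using the chain rule for subdifferentials (Prop.~\ref{P:chain}) that has been set up in the variable-exponent setting. First I would reformulate \eqref{pde}--\eqref{ic} as a doubly nonlinear evolution equation of the form $\mathcal A(\partial_t u) + \partial \phi(u) \ni f$, where $\mathcal A : u \mapsto |u|^{p(x)-2}u$ is the duality-type operator from $L^{p(x)}(Q)$ to $L^{p'(x)}(Q)$, realized as the subdifferential of the convex functional $\psi(v) = \int_Q \frac{1}{p(x)}|v|^{p(x)}\,\d x\,\d t$, and $\phi$ is the convex energy $\phi(u) = \int_\Omega \frac{1}{m(x)}|\nabla u|^{m(x)}\,\d x$ associated with the $m(x)$-Laplacian on $W^{1,m(x)}_0(\Omega)$. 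Both $\mathcal A$ and $\partial\phi$ are maximal monotone, and the coercivity/growth built into (H1) (with $1 < p^-, m^- \le p^+, m^+ < \infty$) guarantees that $\mathcal A$ is bounded and coercive in the appropriate sense. The crucial embedding $W^{1,m(x)}_0(\Omega) \hookrightarrow L^{p(x)}(\Omega)$, which must be \emph{compact}, is provided precisely by assumption \eqref{H2}, namely $\essinf_x(m^*(x) - p(x)) > 0$; this is the hypothesis that couples the two exponents and makes the energy dissipation control the $L^{p(x)}$-norm of $\partial_t u$.

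Next I would set up the discrete scheme. Partitioning $[0,T]$ into steps of size $\tau = T/n$ and writing $u_k$ for the approximation at $t_k = k\tau$, the implicit Euler discretization reads
\begin{equation}
\mathcal A\!\left(\frac{u_k - u_{k-1}}{\tau}\right) + \partial\phi(u_k) \ni f_k,
\end{equation}
with $u_0$ given and $f_k$ a local time average of $f$. Each step is a stationary problem whose solvability follows from standard surjectivity results for maximal monotone operators (the sum $\mathcal A(\,\cdot\,/\tau) + \partial\phi$ being coercive and maximal monotone on $L^{p(x)}(\Omega)$); alternatively each $u_k$ minimizes a strictly convex functional. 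I would then derive the a~priori estimates by testing the discrete equation with $(u_k - u_{k-1})/\tau$: the monotone term yields the nonnegative dissipation $\langle \mathcal A(\partial_t u), \partial_t u\rangle$, and the $\partial\phi$-term telescopes, using convexity, into a discrete energy inequality controlling $\sup_k \phi(u_k)$ and $\sum_k \tau\,\langle \mathcal A(\delta u_k), \delta u_k\rangle$. By the modular--norm relations in variable exponent spaces this gives uniform bounds on the piecewise-linear and piecewise-constant interpolants $\hat u_\tau, \bar u_\tau$ in $L^\infty(0,T;W^{1,m(x)}_0(\Omega))$ and on $\partial_t \hat u_\tau$ in $L^{p(x)}(Q)$, together with a bound on $\Delta_{m(x)}\bar u_\tau$ in $L^{p'(x)}(Q)$.

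Finally I would pass to the limit as $\tau \to 0$. The bounds give weak(-star) limits $u$, with $\partial_t u \in L^{p(x)}(Q)$ and $\Delta_{m(x)} u \in L^{p'(x)}(Q)$; the compact embedding from \eqref{H2} upgrades this to strong convergence of $\bar u_\tau$ in $L^{p(x)}(Q)$, and an Ascoli-type/Aubin--Lions argument yields the continuity and weak continuity asserted in Definition~\ref{D:ssol}(i). The delicate point is identifying the nonlinear limits: for the elliptic operator I would use the standard Minty/monotonicity trick on $\partial\phi$, but for the \emph{doubly} nonlinear coupling one cannot simply pass $\mathcal A(\partial_t \hat u_\tau) \rightharpoonup \mathcal A(\partial_t u)$ by weak convergence alone, and here is where the main obstacle lies. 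I expect the hard part to be justifying the energy (chain-rule) identity $\frac{d}{dt}\phi(u(t)) = \langle \partial\phi(u(t)), \partial_t u(t)\rangle$ \emph{in the variable-exponent space} $L^{p(x)}(Q)$, where the usual Brezis chain rule \cite[Lemme~3.3, p.~76]{HB1} does not apply off-the-shelf because space and time variables do not decouple and the naive identification $L^{p(x)}(Q)\sim L^{p(x)}(0,T;L^{p(x)}(\Omega))$ fails. This is exactly the gap filled by Prop.~\ref{P:chain}: using it I can pass to the limit in the discrete energy inequality, recover the limiting energy equality, and then close the monotonicity argument to identify $\mathcal A(\partial_t u)$, thereby verifying that $u$ solves \eqref{pde} a.e.\ and attains the initial datum, completing the proof.
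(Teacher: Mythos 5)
Your proposal is correct and follows essentially the same route as the paper: reduction to the abstract doubly nonlinear equation $\partial\psi(\partial_t u)+\partial\phi(u)\ni f$, an implicit Euler scheme solved by convex minimization, a priori estimates from testing with the discrete time derivative, Aubin--Lions compactness via the compact embedding given by \eqref{H2}, and finally identification of the limit of $\mathcal A(\partial_t u_\tau)$ through the chain rule of Proposition~\ref{P:chain} combined with a $\limsup$/maximal-monotonicity argument. You also correctly locate the genuine difficulty exactly where the paper does, namely in establishing the chain rule (energy identity) in $L^{p(x)}(Q)$, where the naive identification with a Lebesgue--Bochner space fails.
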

In the case when the forcing term $f$ is more regular,
namely
\begin{equation}\label{f-regu}
 t \partial_t f \in L^{p'(x)}(Q),
\end{equation}
we can also prove parabolic regularization properties 
of strong solutions:
\begin{theorem}[Time-regularization of strong solutions]\label{T:regu}
 Assume \eqref{f-regu} together with {\rm (H)}. Then, the
 Cauchy-Dirichlet problem \eqref{pde}--\eqref{ic} admits a strong
 solution $u$ in the sense of Definition~{\rm \ref{D:ssol}},
 which additionally satisfies
 \begin{equation}\label{regu}
  \esssup_{t \in (\delta,T)} \|\partial_t u(\cdot,t)\|_{L^{p(x)}(\Omega)} <
   \infty 
   \quad \mbox{ and } \quad
   \esssup_{t \in (\delta,T)} \|\Delta_{m(x)}
   u(\cdot,t)\|_{L^{p'(x)}(\Omega)} < \infty 
\end{equation}
 for any $\delta \in (0,T)$.
\end{theorem}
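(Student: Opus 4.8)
The plan is to adapt the classical parabolic smoothing argument (based on time-differentiation, as in the Barbu--Arai--Senba line quoted above) to the variable-exponent setting, deriving a \emph{weighted} second a priori estimate and carrying it through the very approximation scheme used to produce the solution of Theorem~\ref{T:ex}. Writing $\psi(v):=\int_\Omega \frac{1}{p(x)}|v|^{p(x)}\,\d x$ and $\phi(u):=\int_\Omega \frac{1}{m(x)}|\nabla u|^{m(x)}\,\d x$, the operators in \eqref{pde} are the subdifferentials $\mathcal A=\partial\psi\colon v\mapsto |v|^{p(x)-2}v$ and $-\Delta_{m(x)}=\partial\phi$, so that \eqref{pde} reads $\mathcal A(\partial_t u)+\partial\phi(u)=f$. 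Formally differentiating in time and testing against $t\,\partial_t u$ gives
\[
\int_\Omega t\,\partial_t\bigl(\mathcal A(\partial_t u)\bigr)\,\partial_t u\,\d x
+\int_\Omega t\,\partial_t\bigl(\partial\phi(u)\bigr)\,\partial_t u\,\d x
=\int_\Omega t\,\partial_t f\,\partial_t u\,\d x .
\]
The second term on the left is nonnegative, since $\partial\phi$ is monotone and hence $\langle \tfrac{\d}{\d t}\partial\phi(u),\partial_t u\rangle\ge 0$ pointwise in time; this is precisely where the chain rule of Proposition~\ref{P:chain} is invoked to give rigorous meaning to $\tfrac{\d}{\d t}\partial\phi(u)$ in $L^{p(x)}(Q)$.

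The crucial observation is a pointwise algebraic identity: for $v=\partial_t u$ and fixed $x$,
\[
\partial_t\bigl(|v|^{p(x)-2}v\bigr)\,v=\frac{1}{p'(x)}\,\partial_t|v|^{p(x)},
\]
since $\frac{\d}{\d v}(|v|^{q-2}v)=(q-1)|v|^{q-2}$ and $\frac{q-1}{q}=\frac1{q'}$ with $q=p(x)$. Integrating over $\Omega$ (the weight $1/p'(x)$ being time-independent) turns the first term into $\frac{\d}{\d t}\bigl(t\int_\Omega \frac{1}{p'(x)}|v|^{p(x)}\,\d x\bigr)-\int_\Omega \frac{1}{p'(x)}|v|^{p(x)}\,\d x$. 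Integrating in time over $(0,t)$, dropping the nonnegative monotone term, and noting that the weighted quantity vanishes at $t=0$, I obtain
\[
t\int_\Omega \frac{1}{p'(x)}\,|\partial_t u(x,t)|^{p(x)}\,\d x
\le \int_0^T\!\!\int_\Omega \frac{1}{p'(x)}|\partial_t u|^{p(x)}\,\d x\,\d s
+\int_0^T\!\!\int_\Omega |s\,\partial_t f|\,|\partial_t u|\,\d x\,\d s .
\]
The first term on the right is controlled by the basic energy estimate of Theorem~\ref{T:ex} ($\partial_t u\in L^{p(x)}(Q)$), while the second is bounded by $2\|s\,\partial_t f\|_{L^{p'(x)}(Q)}\|\partial_t u\|_{L^{p(x)}(Q)}$ via the variable-exponent H\"older inequality; here assumption \eqref{f-regu} is used in its sharp form, the weight $s$ being precisely the one that absorbs the singularity at $t=0$. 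Since $1/p'(x)$ is bounded below by a positive constant (as $p^+<\infty$), the modular $\int_\Omega|\partial_t u(t)|^{p(x)}\,\d x$ is bounded by $C/t\le C/\delta$ for $t>\delta$, which by the modular--norm relation yields the first bound in \eqref{regu}.

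The second bound in \eqref{regu} follows by rewriting \eqref{pde} as $\Delta_{m(x)}u=|\partial_t u|^{p(x)-2}\partial_t u-f$. Since $(p(x)-1)p'(x)=p(x)$, the modular of $|\partial_t u|^{p(x)-2}\partial_t u$ in $L^{p'(x)}(\Omega)$ equals that of $\partial_t u$ in $L^{p(x)}(\Omega)$, hence is bounded for $t>\delta$ by the step above; and $\esssup_{t>\delta}\|f(\cdot,t)\|_{L^{p'(x)}(\Omega)}<\infty$ follows from $f,\,t\partial_t f\in L^{p'(x)}(Q)$ through the vector-valued fundamental theorem of calculus $f(t)=f(t_0)+\int_{t_0}^t \frac1s\,(s\,\partial_t f(s))\,\d s$ together with the measurability/Bochner correspondence developed earlier in the paper.

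The main obstacle is the rigorous justification of the time-differentiation: $\partial_t u$ is not a priori differentiable in time and, more severely, space and time cannot be decoupled in $L^{p(x)}(Q)$, so the first display has no immediate meaning. I would therefore carry out the computation not on the limit solution but on the regularized problem underlying the proof of Theorem~\ref{T:ex} (where the estimate becomes licit after replacing $\partial_t$ by difference quotients $\delta_h$ and using the monotonicity of $\partial\phi$ on the quotients), obtaining the weighted bound uniformly in the approximation parameters; passing to the limit is then routine by weak lower semicontinuity of the modular $v\mapsto\int_\Omega|v|^{p(x)}\,\d x$. Throughout, the delicate points are the correct interpretation of $\tfrac{\d}{\d t}\partial\phi(u)$ and of the monotone term in $L^{p(x)}(Q)$, for which Proposition~\ref{P:chain} and the revised Yosida and Moreau--Yosida machinery are essential.
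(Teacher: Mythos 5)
Your proposal is correct and follows essentially the same route as the paper, which carries out precisely this $t$-weighted second energy estimate at the time-discrete level — differences of the scheme \eqref{td} tested against $u_{n+1}-u_n$, with monotonicity of $\partial_\Omega\phi$ discarding the elliptic term and the convex-conjugate identity $\psi^*\left(\partial_\Omega\psi(v)\right)=\int_\Omega\frac{1}{p'(x)}|v(x)|^{p(x)}\,\d x$ serving as the rigorous discrete counterpart of your pointwise identity — and then obtains the bound on $\Delta_{m(x)}u$ by comparison in the equation together with a lemma controlling $\sup_t\|t f(\cdot,t)\|_{V^*}$ via $\partial_t(tf)=f+t\partial_t f$, exactly as you describe. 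The only slip is attributing the treatment of the monotone term to Proposition~\ref{P:chain}: no chain rule is needed there, since at the difference-quotient level plain monotonicity of $\partial_\Omega\phi$ suffices, which is what your own rigorous plan (and the paper's proof) actually uses.
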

\noindent
The remainder of the paper is organized as follows:
in Section \ref{S:P}, we summarize some preliminary material on convex
analysis and variable exponent Lebesgue and Sobolev spaces to be used
later. In Section \ref{S:red}, we reduce \eqref{pde}--\eqref{ic} to a
doubly nonlinear evolution equation and discuss its representation in
a Lebesgue space of space-time variables as well as a pointwise (in
time) one. Moreover, we also provide a summary of the relations
occurring between Lebesgue spaces of space-time variables and 
Lebesgue-Bochner spaces of vector-valued functions (see
Proposition \ref{P:ident}). Section \ref{S:th1} is devoted to a proof of
Theorem \ref{T:ex}. Our argument basically relies on a time-discretization
and limiting procedure. In particular, a chain-rule for convex functionals 
in a mixed frame turns out to play a crucial role. To prove this chain-rule, 
we introduce some modified definitions for
\emph{resolvent}, \emph{Yosida approximation} and
\emph{Moreau-Yosida regularization}, which, compared to the standard ones,
are more suitable for working in the variable-exponent setting.
In Section \ref{S:th2}, we give a proof of
Theorem \ref{T:regu} by performing a second energy estimate in a
discrete level. Finally, in the Appendix, we present a survey
of the theory of Lebesgue-Bochner spaces and see how this
theory can be extended to the variable exponent case. In
particular, we give a proof of 
Proposition~\ref{P:ident} of \S~\ref{Ss:space}. 

\noindent
{\bf Notation.}
We write $Q = \Omega \times (0,T)$.
For vector-valued functions $u : (0,T) \to X$, we denote by $u'$ the
$X$-valued derivative of $u$ in time. For $u : Q \to \mathbb R$, 
the partial derivative of $u$ in time is denoted by $\partial_t u$.


\section{Preliminaries}\label{S:P}

This section is devoted to recall some preliminary results on convex analysis
and on Lebesgue and Sobolev spaces with variable exponents.


\subsection{Convex analysis}\label{Ss:CoAn}

Let $E$ be a reflexive Banach space with a norm $|\cdot|_E$, a dual space $E^*$
(with a norm $|\cdot|_{E^*}$), and a duality pairing $\langle \cdot,
\cdot \rangle_E$ (or $\langle \cdot, \cdot \rangle$ for short) between
$E$ and $E^*$. Let $\varphi : E \to (-\infty,\infty]$ be a proper (i.e.,
$\varphi \not\equiv \infty$), lower semicontinuous convex function with
the \emph{effective domain}
$$
  D(\varphi) := \left\{ u \in E \colon \varphi(u) < \infty \right\}.
$$
The \emph{subdifferential operator} $\partial \varphi : E \to 2^{E^*}$
associated with $\varphi$ is defined by
$$
\partial \varphi(u) := \left\{ \xi \in E^* \colon \varphi(v) - \varphi(u) \geq
\langle \xi, v - u \rangle \quad \mbox{ for all } \ v \in D(\varphi) \right\},
$$
for $u \in D(\varphi)$, with the domain
$D(\partial \varphi) := \{u \in D(\varphi) \colon \partial \varphi(u) \neq
\emptyset\}$.  It is well known that the subdifferential 
of any convex functional is a maximal monotone 
operator in $E \times E^*$. Furthermore,
$\varphi$ is said to be \emph{Fr\'echet differentiable} in $E$ if, for
any $u \in E$, there exists $\xi_u \in E^*$ such that
$$
\left| \dfrac{\varphi(u+v)-\varphi(u)-\langle \xi_u, v
\rangle}{|v|_E}\right| \to 0, \quad \mbox{ whenever } \quad 
v \to 0 \ \mbox{ strongly in } E.
$$
Then $\d \varphi : E \to E^*$,
$\d \varphi: u \mapsto \xi_u$ is called a \emph{Fr\'echet
derivative} of $\varphi$. In particular, if $\varphi$ is convex and Fr\'echet
differentiable in $E$, then $\partial \varphi = \d \varphi$.

Moreover, the \emph{convex conjugate} $\varphi^* : E^* \to (-\infty,\infty]$ of
$\varphi$ is defined as
$$
\varphi^*(\xi):= \sup_{u \in E} \left(
\langle \xi, u \rangle - \varphi(u)
\right) \quad \mbox{ for } \ \xi \in E^*.
$$
It particularly holds that $\partial \varphi^* = (\partial
\varphi)^{-1}$, that is, $\xi \in \partial \varphi(u)$ if and only if $u
\in \partial \varphi^*(\xi)$.


\subsection{Variable exponent Lebesgue and Sobolev spaces}\label{Ss:varExp}

In this subsection, we briefly review the theory of Lebesgue and
Sobolev spaces with variable exponent to be used later.
The reader is referred to~\cite{DHHR11} for a more detailed
survey of this field. Let $\mathcal O$ be a domain in $\mathbb{R}^N$. 
We denote by $\mathcal P(\mathcal O)$ the set of all measurable functions $p
: \mathcal O \to [1,\infty]$. For $p \in \mathcal P(\mathcal O)$, we write
$$
 p^+ := \esssup_{x \in \mathcal O} p(x),
  \quad p^- := \essinf_{x \in \mathcal O} p(x).
$$
Throughout this subsection, we assume that $p \in \mathcal P(\mathcal O)$.
Then, for $p^+<+\infty$, the Lebesgue space with a variable exponent $p(x)$ 
is defined as follows:
$$
L^{p(x)}(\mathcal O) 
 := \bigg\{
     u : \mathcal O \to \mathbb{R} \colon \mbox{ measurable in $\mathcal O$ and }
     \int_{\mathcal O} |u(x)|^{p(x)} \d x < \infty
    \bigg\}
$$
with a Luxemburg-type norm
$$
\|u\|_{L^{p(x)}(\mathcal O)}
  := \inf \left\{
	   \lambda > 0 \colon \int_\mathcal O \left|
				      \frac{u(x)}{\lambda}
				     \right|^{p(x)} \d x \leq 1
	  \right\}.
$$
Then $L^{p(x)}(\mathcal O)$ is a special sort of Musielak-Orlicz space
(see~\cite{Musielak}) and is sometimes called \emph{Nakano space}. 
For $p^+ < \infty$, the dual space of $L^{p(x)}(\mathcal O)$ is
identified with $L^{p'(x)}(\mathcal O)$ with the dual variable
exponent $p' \in \mathcal P$ given by
$$
\dfrac 1 {p(x)} + \dfrac 1 {p'(x)} = 1 \quad \mbox{ for a.e.~} x \in \mathcal O,
$$
where we write $1/\infty = 0$. In the case when $p^+=+\infty$ the 
above definition can be adapted with minor changes (see, e.g.,
\cite[Chap.~3]{DHHR11}). 

H\"older's inequalities also hold for variable exponent Lebesgue spaces
(cf.~\cite[Lemma 3.2.20]{DHHR11}):
\begin{proposition}[H\"older's inequality]\label{P:Holder}
For $s,p,q \in \mathcal P(\mathcal O)$, it holds that
$$
\|fg\|_{L^{s(x)}(\mathcal O)} \leq 2 \|f\|_{L^{p(x)}(\mathcal O)} \|g\|_{L^{q(x)}(\mathcal O)}
\quad \mbox{ for all } \ f \in L^{p(x)}(\mathcal O),\
g \in L^{q(x)}(\mathcal O),
$$
provided that
$$
\dfrac 1 {s(x)} = \dfrac 1 {p(x)} + \dfrac 1 {q(x)}
\quad \mbox{ for a.e. }x \in \mathcal O.
$$
In particular, if $\mathcal O$ is bounded and $p(x) \leq q(x)$ 
for a.e.~$x \in \mathcal O$, then $L^{q(x)}(\mathcal O)$ is continuously 
embedded in $L^{p(x)}(\mathcal O)$.
\end{proposition}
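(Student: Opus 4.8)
The plan is to reduce the statement to a pointwise Young-type inequality combined with the unit-ball property relating the Luxemburg norm to the modular $\int_{\mathcal O}|u|^{p(x)}\,\d x$. First I would dispose of the trivial cases: if $\|f\|_{L^{p(x)}(\mathcal O)}=0$ or $\|g\|_{L^{q(x)}(\mathcal O)}=0$, then $f=0$ or $g=0$ a.e., whence $fg=0$ a.e.\ and the inequality holds trivially. Otherwise, set $F:=f/\|f\|_{L^{p(x)}(\mathcal O)}$ and $G:=g/\|g\|_{L^{q(x)}(\mathcal O)}$, so that, by the very definition of the Luxemburg norm (unit-ball property), $\int_{\mathcal O}|F|^{p(x)}\,\d x\le 1$ and $\int_{\mathcal O}|G|^{q(x)}\,\d x\le 1$.

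The core of the argument is the pointwise estimate. At a.e.\ $x$ where all exponents are finite, the numbers $p(x)/s(x)$ and $q(x)/s(x)$ are conjugate, since $\frac{s(x)}{p(x)}+\frac{s(x)}{q(x)}=s(x)\bigl(\frac1{p(x)}+\frac1{q(x)}\bigr)=1$. Applying the elementary (constant-exponent) Young inequality with these conjugate exponents to the product $|F(x)|^{s(x)}|G(x)|^{s(x)}$ gives
$$
|F(x)G(x)|^{s(x)}\le \frac{s(x)}{p(x)}|F(x)|^{p(x)}+\frac{s(x)}{q(x)}|G(x)|^{q(x)}\le |F(x)|^{p(x)}+|G(x)|^{q(x)},
$$
where in the last step I use $s(x)\le p(x)$ and $s(x)\le q(x)$ (both of which follow from $1/s(x)\ge 1/p(x)$ and $1/s(x)\ge 1/q(x)$). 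Integrating over $\mathcal O$ then yields $\int_{\mathcal O}|FG|^{s(x)}\,\d x\le 2$. To convert this modular bound into the desired norm bound with the factor $2$, I would test the modular of $FG/2$: since $s(x)\ge 1$ a.e., we have $2^{-s(x)}\le \tfrac12$, so
$$
\int_{\mathcal O}\left|\frac{F(x)G(x)}{2}\right|^{s(x)}\,\d x=\int_{\mathcal O}2^{-s(x)}|FG|^{s(x)}\,\d x\le \frac12\int_{\mathcal O}|FG|^{s(x)}\,\d x\le 1.
$$
By the unit-ball property again, this means $\|FG/2\|_{L^{s(x)}(\mathcal O)}\le 1$, i.e.\ $\|FG\|_{L^{s(x)}(\mathcal O)}\le 2$; recalling the definitions of $F$ and $G$ and the homogeneity of the norm gives precisely $\|fg\|_{L^{s(x)}(\mathcal O)}\le 2\|f\|_{L^{p(x)}(\mathcal O)}\|g\|_{L^{q(x)}(\mathcal O)}$.

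The main obstacle is the bookkeeping when the exponents are allowed to take the value $\infty$ on sets of positive measure, where the modular must be read through the $L^\infty$-part of the Musielak--Orlicz norm and the conjugacy computation above degenerates. I would handle this by decomposing $\mathcal O$ into the measurable pieces on which each of $p,q,s$ is finite or infinite, applying the finite-exponent argument above on the region where $s(x)<\infty$ and an elementary $L^\infty$-type Hölder estimate on the region $\{s(x)=\infty\}=\{p(x)=q(x)=\infty\}$, and then combining the contributions; this only affects the constant harmlessly and preserves the bound $2$.

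Finally, for the embedding assertion I would apply the inequality just proved with $s=p$, the factors $u$ and the constant function $1$, and the auxiliary exponent $r\in\mathcal P(\mathcal O)$ defined by $\frac1{r(x)}=\frac1{p(x)}-\frac1{q(x)}\ge 0$ (which is a legitimate exponent since $p(x)\le q(x)$). This gives $\|u\|_{L^{p(x)}(\mathcal O)}\le 2\|u\|_{L^{q(x)}(\mathcal O)}\|1\|_{L^{r(x)}(\mathcal O)}$, and the last factor is finite precisely because $\mathcal O$ is bounded, yielding the continuous embedding $L^{q(x)}(\mathcal O)\hookrightarrow L^{p(x)}(\mathcal O)$.
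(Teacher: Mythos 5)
Your proof is correct. Note, however, that the paper does not prove this proposition at all: it is quoted from the literature (Lemma~3.2.20 of the book of Diening--Harjulehto--H\"ast\"o--R\r{u}\v{z}i\v{c}ka, cited as \cite[Lemma 3.2.20]{DHHR11}), so what you have done is reconstruct, essentially verbatim, the standard textbook argument: normalize by the Luxemburg norms, use the unit-ball property of the modular, apply pointwise Young's inequality with the conjugate pair $p(x)/s(x)$, $q(x)/s(x)$, and convert the resulting modular bound $\int_{\mathcal O}|FG|^{s(x)}\,\d x\le 2$ back into a norm bound by testing $FG/2$. Two small remarks. First, the inequality $2^{-s(x)}\le \tfrac12$ that you use in the conversion step requires $s(x)\ge 1$ a.e.; this is not automatic from $1/s=1/p+1/q$ (e.g.\ $p=q$ close to $1$ would force $s<1$) but is exactly the standing hypothesis $s\in\mathcal P(\mathcal O)$, so you are entitled to it --- it would be worth saying explicitly that this is where that hypothesis enters. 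Equivalently one can phrase this step via convexity: $t\mapsto t^{s(x)}$ is convex and vanishes at $0$, so $\varphi_{s(x)}(t/2)\le\tfrac12\varphi_{s(x)}(t)$, which has the advantage of surviving the case $s(x)=\infty$ with the Musielak--Orlicz convention $\varphi_\infty(t)=\infty\cdot\chi_{(1,\infty)}(t)$. Second, your treatment of exponents taking the value $\infty$ is only sketched, but the sketch is sound: with that convention the pointwise bound $\varphi_{s(x)}(|FG|)\le\varphi_{p(x)}(|F|)+\varphi_{q(x)}(|G|)$ holds on all of $\mathcal O$ (on $\{p=\infty\}$ one has $|F|\le 1$ a.e.\ whenever the modular is finite, and similarly for $q$), so no decomposition constant actually appears and the factor $2$ is preserved. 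Your derivation of the embedding from H\"older with the auxiliary exponent $1/r=1/p-1/q$ and $\|1\|_{L^{r(x)}(\mathcal O)}<\infty$ on a bounded domain is also the standard route and is correct.
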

The following proposition plays an important role to establish energy
estimates (see, e.g., Theorem 1.3 of~\cite{FZ01} for a proof).
\begin{proposition}\label{P:px-est}
Let $p^+<\infty$. Then, we have
$$
\sigma_{p(\cdot)}^-(\|w\|_{L^{p(x)}(\mathcal O)}) \leq
 \int_\mathcal O|w(x)|^{p(x)} \d x
\leq \sigma_{p(\cdot)}^+(\|w\|_{L^{p(x)}(\mathcal O)})
\quad \mbox{ for all } \ w \in L^{p(x)}(\mathcal O)
$$
with the strictly increasing functions
$$
\sigma_{p(\cdot)}^-(s) := \min\{s^{p^-}, s^{p^+}\}, \quad
\sigma_{p(\cdot)}^+(s) := \max\{s^{p^-}, s^{p^+}\} \quad \mbox{ for } s \geq 0.
$$
\end{proposition}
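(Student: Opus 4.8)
The plan is to exploit the standard bridge between the Luxemburg norm and the \emph{modular} $\rho(w) := \int_{\mathcal O} |w(x)|^{p(x)}\,\d x$, which is precisely the central quantity appearing in the statement. First I would record the elementary scaling behaviour of the modular: for any $\lambda \geq 1$ one has $\lambda^{p^-} \leq \lambda^{p(x)} \leq \lambda^{p^+}$ for a.e.\ $x$, whence dividing $|w(x)|^{p(x)}$ by $\lambda^{p(x)}$ and integrating yields
\[
\lambda^{-p^+}\rho(w) \leq \rho(w/\lambda) \leq \lambda^{-p^-}\rho(w),
\]
while for $0 < \lambda \leq 1$ the two exponents exchange their roles, giving $\lambda^{-p^-}\rho(w) \leq \rho(w/\lambda) \leq \lambda^{-p^+}\rho(w)$. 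These pointwise power estimates are the only analytic ingredient and use nothing beyond $p^- \leq p(x) \leq p^+$.

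The crux of the argument is the \emph{unit modular property}: for $w \neq 0$ the Luxemburg norm $\lambda_0 := \|w\|_{L^{p(x)}(\mathcal O)}$ satisfies $\rho(w/\lambda_0) = 1$. I would obtain this by noting that $\lambda \mapsto \rho(w/\lambda)$ is nonincreasing, tends to $0$ as $\lambda \to \infty$ and to $+\infty$ as $\lambda \to 0^+$, and is continuous. Continuity is exactly the point at which the hypothesis $p^+ < \infty$ is essential: it permits one to invoke dominated convergence (the integrand $|w/\lambda|^{p(x)}$ being dominated on a neighbourhood of any $\lambda_* > 0$), so that the defining infimum is attained and the modular equals $1$ there. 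This is the step I expect to be the only genuinely delicate one, since without $p^+ < \infty$ the modular may jump and the clean equality $\rho(w/\lambda_0)=1$ fails.

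With the unit modular property in hand, the conclusion follows by a simple dichotomy on whether $\|w\| \geq 1$ or $\|w\| \leq 1$. If $\lambda_0 = \|w\| \geq 1$, inserting $\lambda = \lambda_0$ into the first scaling chain and using $\rho(w/\lambda_0) = 1$ gives $\|w\|^{p^-} \leq \rho(w) \leq \|w\|^{p^+}$, and since $\|w\| \geq 1$ these are exactly $\sigma_{p(\cdot)}^-(\|w\|)$ and $\sigma_{p(\cdot)}^+(\|w\|)$. If instead $0 < \|w\| \leq 1$, the second scaling chain yields $\|w\|^{p^+} \leq \rho(w) \leq \|w\|^{p^-}$, which again coincides with the asserted bounds because for $s \leq 1$ the minimum and maximum of $\{s^{p^-},s^{p^+}\}$ are interchanged. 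The degenerate case $w = 0$ makes both sides vanish. Finally, the strict monotonicity of $\sigma_{p(\cdot)}^\pm$ claimed in the statement is immediate from $1 < p^- \leq p^+ < \infty$ together with the fact that $s \mapsto s^a$ is increasing for $a > 0$, so I would dispatch it in a single line.
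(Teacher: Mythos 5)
Your proof is correct, and it is essentially the standard argument: the paper itself gives no proof of this proposition but cites Theorem 1.3 of \cite{FZ01}, whose proof proceeds exactly along your lines (unit modular property $\rho(w/\|w\|_{L^{p(x)}(\mathcal O)})=1$ for $w\neq 0$, obtained from monotonicity and continuity of $\lambda\mapsto\rho(w/\lambda)$ under $p^+<\infty$, followed by the dichotomy $\|w\|\gtrless 1$). The only cosmetic remark is that strict monotonicity of $\sigma_{p(\cdot)}^{\pm}$ needs only $p^-\geq 1$, not $p^->1$, so your final line is slightly stronger than necessary but harmless.
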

We next define variable exponent Sobolev spaces $W^{1,p(x)}(\mathcal O)$ 
as follows:
$$
W^{1, p(x)}(\mathcal O) := \left\{
				u \in L^{p(x)}(\mathcal O) \colon
				\frac{\partial u}{\partial x_i} \in L^{p(x)}(\mathcal O) 
				\quad \mbox{ for all } \ i = 1,2,\ldots, N
			       \right\}
$$
with the norm 
$$
\|u\|_{W^{1,p(x)}(\mathcal O)} 
 :=
 \left(
  \|u\|_{L^{p(x)}(\mathcal O)}^2 + \|\nabla u\|_{L^{p(x)}(\mathcal O)}^2
 \right)^{1/2}, 
$$
where $\|\nabla u\|_{L^{p(x)}(\mathcal O)}$ denotes the
$L^{p(x)}(\mathcal O)$-norm of $|\nabla u|$. Furthermore, let 
$W^{1, p(x)}_0 (\mathcal O)$ be the closure of
$C^\infty_0(\mathcal O)$ in $W^{1,p(x)}(\mathcal O)$. Here we note that,
usually, the space $W^{1,p(x)}_0(\mathcal O)$ is defined in a slightly different
way for the variable exponent case. However, both definitions are
equivalent under the regularity assumption \eqref{log-conti} 
given below.

The following proposition is concerned with the uniform convexity of
$L^{p(x)}$- and $W^{1,p(x)}$-spaces.
\begin{proposition}[\cite{DHHR11}]
If $p^+ < \infty$, then $L^{p(x)}(\mathcal O)$ is a separable Banach space.
 If  $p^- > 1$ and $p^+ < \infty$, 
then $L^{p(x)}(\mathcal O)$ and
 $W^{1,p(x)}(\mathcal O)$ are uniformly convex.
Hence they are reflexive.
\end{proposition}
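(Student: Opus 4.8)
The plan is to treat the three assertions --- completeness together with separability, uniform convexity, and reflexivity --- separately, since each rests on a different mechanism, and to reduce everything to properties of the \emph{modular} $\rho(u) := \int_{\mathcal O} |u(x)|^{p(x)}\,\d x$, whose interplay with the Luxemburg norm is quantified by Proposition \ref{P:px-est}.

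First, for the Banach-space assertion I would verify completeness by the Riesz--Fischer scheme. Given a norm-Cauchy sequence $(u_n)$, the two-sided bound of Proposition \ref{P:px-est} shows it is also $\rho$-Cauchy; passing to a rapidly converging subsequence with $\|u_{n_{k+1}} - u_{n_k}\|_{L^{p(x)}(\mathcal O)} \le 2^{-k}$, the function $g := \sum_k |u_{n_{k+1}} - u_{n_k}|$ has finite norm by the triangle inequality and hence lies in $L^{p(x)}(\mathcal O)$, so the subsequence converges a.e.\ to some $u$, and Fatou's lemma applied to the modular gives $u \in L^{p(x)}(\mathcal O)$ with $u_n \to u$ in norm. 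Here $p^+ < \infty$ is essential, so that smallness in norm is equivalent to smallness in modular and the attendant dominated-convergence arguments apply. For separability I would first show that bounded simple functions are dense --- truncating and using $p^+ < \infty$ to force $\rho$-convergence of the truncations --- and then approximate the supports of simple functions by finite unions of dyadic cubes and their values by rationals, exhibiting an explicit countable dense set.

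The core of the statement is uniform convexity, and here my plan is to establish it first at the level of the modular and then transfer it to the norm. The pointwise engine is a Clarkson-type inequality for $t \mapsto |t|^q$: for $q \ge 2$ one has $|\tfrac{a+b}{2}|^q + c_q|\tfrac{a-b}{2}|^q \le \tfrac12|a|^q + \tfrac12|b|^q$, with a dual version for $1<q<2$, the convexity constant $c_q$ being bounded below uniformly for $q$ in the compact interval $[p^-,p^+]\subset(1,\infty)$. Integrating over $\mathcal O$ yields \emph{modular uniform convexity}: for every $\vep>0$ there is $\delta>0$ such that $\rho(u),\rho(v)\le 1$ and $\rho(\tfrac{u-v}{2})\ge\vep$ force $\rho(\tfrac{u+v}{2})\le 1-\delta$. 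The transfer to the norm then uses that, since $p^+<\infty$, the unit sphere is characterized by $\rho(u)=1$, together with the comparison functions $\sigma_{p(\cdot)}^\pm$ of Proposition \ref{P:px-est} to convert a norm gap $\|u-v\|\ge\vep$ into a modular gap and a modular decrease back into a norm bound $\|\tfrac{u+v}{2}\|\le 1-\delta'$. I expect this transfer --- keeping the constants uniform in the exponent and correctly passing between the two notions of ``size'' --- to be the main obstacle, as it is precisely where the variable exponent obstructs the clean Clarkson argument available for constant $p$.

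For $W^{1,p(x)}(\mathcal O)$ I would argue by transport of structure: the linear map $u \mapsto (u,\nabla u)$ is an isometric embedding of $W^{1,p(x)}(\mathcal O)$ into the product $L^{p(x)}(\mathcal O)\times L^{p(x)}(\mathcal O;\mathbb R^N)$ endowed with the $\ell^2$ product norm, and its image is closed by the completeness just proved. Each factor is uniformly convex (the scalar case above, and the $\mathbb R^N$-valued case by the identical modular argument applied to the pointwise Euclidean norm), an $\ell^2$ product of uniformly convex spaces is uniformly convex, and a closed subspace inherits uniform convexity; hence so does $W^{1,p(x)}(\mathcal O)$. Finally, reflexivity of both spaces is immediate from the Milman--Pettis theorem, by which every uniformly convex Banach space is reflexive.
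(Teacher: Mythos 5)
The paper itself does not prove this proposition: it is quoted verbatim from \cite{DHHR11}, so your proposal must be measured against the standard arguments of that reference. Your treatment of completeness and separability (Riesz--Fischer via the modular and Proposition \ref{P:px-est}, density of simple functions, dyadic cubes and rational values), your reduction of $W^{1,p(x)}(\mathcal O)$ to a closed subspace of the $\ell^2$-product $L^{p(x)}(\mathcal O)\times L^{p(x)}(\mathcal O;\mathbb R^N)$ (which matches the paper's choice of norm $(\|u\|^2+\|\nabla u\|^2)^{1/2}$), and the final appeal to Milman--Pettis are all correct and essentially follow the textbook route.

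The gap is in the core step, uniform convexity of $L^{p(x)}(\mathcal O)$, on the region where the exponent lies below $2$. For $q\ge 2$ your pointwise Clarkson inequality holds (with $c_q=1$), and integrating it does give modular uniform convexity where $p(x)\ge 2$. But for $1<q<2$ there is \emph{no} pointwise inequality of the form $\left|\tfrac{a+b}{2}\right|^q + c_q\left|\tfrac{a-b}{2}\right|^q \le \tfrac12|a|^q + \tfrac12|b|^q$ with any $c_q>0$: take $b=1$, $a=1+\vep$; the right-hand side exceeds the first term on the left only by $\tfrac{q(q-1)}{8}\vep^2 + O(\vep^3)$, whereas the added term is $c_q(\vep/2)^q \gg \vep^2$ as $\vep\to 0$, since $q<2$. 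The genuine ``dual'' Clarkson inequality for $1<q<2$ is an inequality between \emph{norms} (the exponent $q'$ sits outside the integral), and its passage from the two-point scalar inequality to the integral form uses Minkowski's integral inequality with exponent $q'/q$ --- a step with no analogue when the exponent depends on $x$; it cannot simply be ``integrated over $\mathcal O$''. Hence your modular uniform convexity remains unproved whenever $p(x)<2$ on a set of positive measure, i.e.\ on most of the range $p^->1$ that the proposition covers. The known repair, and what \cite{DHHR11} actually do, is to measure the deficit \emph{relatively}: $t\mapsto t^q$ is uniformly convex in the sense that $\left(\tfrac{a+b}{2}\right)^q \le (1-\delta)\tfrac{a^q+b^q}{2}$ whenever $|a-b|\ge\vep\max(a,b)$, with $\delta=\delta(\vep)$ uniform for $q\in[p^-,p^+]\subset(1,\infty)$ (a continuity--compactness argument valid also for $q<2$); this relative property does integrate to a uniformly convex modular, and then the $\Delta_2$ condition (i.e.\ $p^+<\infty$) transfers it to the Luxemburg norm --- that last transfer being close to the one you sketch. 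So your plan is sound as written only for $p^-\ge 2$ and needs the relative-deficit formulation to cover $1<p^-<2$.
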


Let us exhibit the Poincar\'e and Sobolev inequalities. 
To do so, we introduce the {\it log-H\"older
condition}:
\begin{equation}\label{log-conti}
 |p(x) - p(x')| \leq \frac{A}{\log(e + 1/|x-x'|)}
  \quad \mbox{ for all } x,x' \in \mathcal O
\end{equation}
with some constant $A > 0$ (see~\cite{DHHR11}). 
This condition is weaker than the H\"older continuity of $p$ over
$\overline{\mathcal O}$ and it implies $p \in C(\overline{\mathcal O})$
and $p^+ < \infty$. We denote by $\mathcal P_{\log}(\mathcal O)$ the set of all
$p \in \mathcal P(\mathcal O)$ satisfying the log-H\"older condition
\eqref{log-conti}.

Then the following properties hold:
\begin{proposition}[\cite{DHHR11}]\label{P:SP}
Let $\mathcal O$ be a bounded domain in $\mathbb R^N$ with smooth boundary
 $\partial \mathcal O$ and let $p \in \mathcal P_{\log}(\mathcal O)$.
\begin{enumerate}
 \item[\rm (i)] There exists a constant $C \geq 0$ such that
       $$
       \|w\|_{L^{p(x)}(\mathcal O)} \leq C \|\nabla w\|_{L^{p(x)}(\mathcal O)}
       \quad \mbox{ for all } \ w \in W^{1,p(x)}_0(\mathcal O).
       $$
       In particular, the space $W^{1,p(x)}_0(\mathcal O)$
       has a norm $\|\cdot\|_{1,p(x)}$ given by
       $$
       \|w\|_{1,p(x)} := \|\nabla w\|_{L^{p(x)}(\mathcal O)} \quad 
       \mbox{ for } \ w \in W^{1,p(x)}_0(\mathcal O),
       $$
       which is equivalent to $\|\cdot\|_{W^{1,p(x)}(\mathcal O)}$.
 \item[\rm (ii)] Let $q : \mathcal O \to [1,\infty)$ be a measurable and
	      bounded function and suppose that 
	      $$
	      q(x) \leq p^*(x) := Np(x)/(N - p(x))_+
	      \quad \mbox{ for a.e. } x \in \mathcal O,
	      $$
	      where $(s)_+ := \max\{s,0\}$ for $s \in \mathbb R$.
	      Then $W^{1,p(x)}(\mathcal O)$ is continuously embedded 
	      in $L^{q(x)}(\mathcal O)$. 
	      
	      In addition, assume that
	      $$
	      \essinf_{x \in \mathcal O} \big(
	      p^*(x) - q(x)
	      \big) > 0.
	      $$
	      Then the embedding $W^{1,p(x)}(\mathcal O) \hookrightarrow 
	      L^{q(x)}(\mathcal O)$ is compact.
\end{enumerate}
\end{proposition}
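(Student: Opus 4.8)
The plan is to deduce the entire statement from one deep, variable-exponent-specific fact: the boundedness of the Hardy--Littlewood maximal operator $M$ on $L^{p(x)}(\mathcal O)$, which holds precisely because $p \in \mathcal P_{\log}(\mathcal O)$ (this is Diening's theorem; see \cite{DHHR11}). This is the single place where the log-H\"older condition \eqref{log-conti} is genuinely used, and everything else follows from classical pointwise potential estimates transplanted to the variable setting. First I would record that, on the bounded domain $\mathcal O$, the Riesz potential $I_1$ of order one is bounded from $L^{p(x)}(\mathcal O)$ into itself, and more precisely that $\|I_1 g\|_{L^{p^*(x)}(\mathcal O)} \le C\|g\|_{L^{p(x)}(\mathcal O)}$ with $1/p^*(x) = 1/p(x) - 1/N$; the proof rests on a Hedberg-type pointwise bound controlling $I_1 g(x)$ by a power of $Mg(x)$ times a power of $\|g\|_{L^{p(x)}(\mathcal O)}$, after which the $M$-boundedness closes the estimate.

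For part (i) I would start from $w \in C_0^\infty(\mathcal O)$ and the elementary representation $|w(x)| \le C_N\, I_1(|\nabla w|)(x)$. Composing with the $L^{p(x)}$-boundedness of $I_1$ on $\mathcal O$ gives $\|w\|_{L^{p(x)}(\mathcal O)} \le C\|\nabla w\|_{L^{p(x)}(\mathcal O)}$, and this extends to all of $W^{1,p(x)}_0(\mathcal O)$ by the density that defines that space. The norm equivalence is then immediate: one inequality is built into the definition of $\|\cdot\|_{W^{1,p(x)}(\mathcal O)}$, while the Poincar\'e inequality just proved yields $\|w\|_{W^{1,p(x)}(\mathcal O)} \le (1+C^2)^{1/2}\|\nabla w\|_{L^{p(x)}(\mathcal O)}$.

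For the continuous embedding in (ii) I would use the Sobolev representation $|w(x)-\langle w\rangle_{\mathcal O}| \le C\, I_1(|\nabla w|)(x)$, dispose of the average $\langle w\rangle_{\mathcal O}$ by a Poincar\'e--Wirtinger argument, and apply the sharp estimate $\|I_1 g\|_{L^{p^*(x)}(\mathcal O)} \le C\|g\|_{L^{p(x)}(\mathcal O)}$ recorded above to obtain $W^{1,p(x)}(\mathcal O) \hookrightarrow L^{p^*(x)}(\mathcal O)$. Since $q(x) \le p^*(x)$ a.e.\ and $\mathcal O$ is bounded, Proposition~\ref{P:Holder} furnishes $L^{p^*(x)}(\mathcal O) \hookrightarrow L^{q(x)}(\mathcal O)$, and composing the two embeddings gives the continuous embedding into $L^{q(x)}(\mathcal O)$.

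The compact embedding is the step I expect to be the main obstacle, since mere continuity is not enough and one must genuinely exploit the strict gap $\essinf_{x \in \mathcal O}(p^*(x)-q(x))>0$. My plan is to take a bounded sequence $\{u_n\}$ in $W^{1,p(x)}(\mathcal O)$; as $\mathcal O$ is bounded it is bounded in the constant-exponent space $W^{1,p^-}(\mathcal O)$, so classical Rellich--Kondrachov yields a subsequence converging a.e.\ and strongly in $L^{s}(\mathcal O)$ for some constant $s$. To upgrade this to convergence in the Luxemburg norm of $L^{q(x)}(\mathcal O)$ I would fix $\vep>0$ with $q(x)+\vep \le p^*(x)$ and interpolate the modular $\int_{\mathcal O}|u_n-u_m|^{q(x)}\,\d x$ between the small $L^{s}$-distance and the uniformly bounded $L^{p^*(x)}$-norms: the strict gap provides exactly the higher-integrability cushion needed to make the tail of this modular uniformly small and to convert the resulting modular smallness back into norm smallness via Proposition~\ref{P:px-est}. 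The delicate point throughout is that the variable exponent forces one to argue with modulars and only afterwards translate to Luxemburg norms, rather than working with a single homogeneous norm as in the constant-exponent Rellich theorem.
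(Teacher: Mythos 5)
First, a point of reference: the paper does not prove this proposition at all --- it is quoted from \cite{DHHR11} --- so the only meaningful comparison is with the proof in that monograph. Your outline (log-H\"older continuity $\Rightarrow$ maximal-operator boundedness $\Rightarrow$ Riesz potential estimates $\Rightarrow$ Poincar\'e and Sobolev inequalities, then constant-exponent Rellich plus a Vitali-type upgrade for compactness) is the canonical route, and your compactness step in particular --- extract an a.e.\ convergent subsequence via Rellich--Kondrachov in $W^{1,p^-}(\mathcal O)$, use the gap $\essinf (p^*-q)>0$ to get a uniform higher-integrability bound, conclude modular convergence and then norm convergence via Proposition~\ref{P:px-est} --- is sound and is essentially how Fan--Zhao and \cite{DHHR11} argue.

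However, two of your intermediate claims are genuinely false under the stated hypotheses, because you silently import assumptions the proposition does not make. (a) Boundedness of the Hardy--Littlewood maximal operator on $L^{p(x)}(\mathcal O)$ requires $p^->1$; the proposition assumes only $p\in\mathcal P_{\log}(\mathcal O)$, and the paper's definition of $\mathcal P_{\log}$ allows $p^-=1$ (already for constant exponents $M$ is unbounded on $L^1$). This is precisely why \cite{DHHR11} builds its Riesz potential estimates on averaging operators and mollification rather than on $M$: that theory is designed to include $p^-=1$, so your ``single deep fact'' is not available in the generality you need. (b) The sharp estimate $\|I_1 g\|_{L^{p^*(x)}(\mathcal O)}\le C\|g\|_{L^{p(x)}(\mathcal O)}$, and with it your intermediate embedding $W^{1,p(x)}(\mathcal O)\hookrightarrow L^{p^*(x)}(\mathcal O)$, require $p^+<N$ (the Hedberg exponent $1-p(x)/N$ must stay positive); the proposition allows $p(x)\ge N$, and for the constant exponent $p\equiv N$ one has $p^*\equiv\infty$ while $W^{1,N}(\mathcal O)\not\hookrightarrow L^\infty(\mathcal O)$ for $N\ge 2$, so the chain $W^{1,p(x)}\hookrightarrow L^{p^*(x)}\hookrightarrow L^{q(x)}$ breaks. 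The statement survives exactly because $q$ is assumed bounded, and the proof must use that: replace $p$ by $\tilde p:=\min\{p,r\}$ with a constant $r<N$ chosen so that $Nr/(N-r)\ge q^++1$ (possible since $Nr/(N-r)\to\infty$ as $r\uparrow N$); then $\tilde p$ is still log-H\"older, $W^{1,p(x)}(\mathcal O)\hookrightarrow W^{1,\tilde p(x)}(\mathcal O)$ by Proposition~\ref{P:Holder}, $\tilde p^+<N$, and $q\le \tilde p^*$ with the gap preserved, so your Hedberg argument applies to $\tilde p$. For the same reason, in the compactness step you should interpolate against uniform $L^{q(x)+\vep}$ bounds rather than $L^{p^*(x)}$ bounds. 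A smaller point: the representation $|w(x)-\langle w\rangle_{\mathcal O}|\le C\, I_1(|\nabla w|)(x)$ holds on convex domains; for a general smooth bounded $\mathcal O$ it needs an extension operator or a chaining (John-domain) argument, which should be said explicitly.
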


\begin{remark}
{\rm
In~\cite{MOSS}, it is proved that the embedding
 $W_0^{1,p(x)}(\mathcal O) \hookrightarrow L^{q(x)}(\mathcal O)$ is
 compact when $p^*(x)$ coincides with $q(x)$ on some thin part of $\mathcal O$ 
 and the difference between the two variable exponents is appropriately
 controlled on the other part (see also~\cite{KS}).
}
\end{remark}

Finally we give a variant of Young's inequality with variable exponents.
Let $p \in \mathcal P(\mathcal O)$ with $p^+<\infty$.
For any $\vep > 0$, there exists a constant $C_\vep \geq 0$ independent
of $x$ such that
\begin{equation}\label{Young}
ab \leq \vep a^{p(x)} + C_\vep b^{p'(x)}
\quad \mbox{ for all } a, b \geq 0 \ \mbox{ and \ for a.e.} \ 
x \in \mathcal O.
\end{equation}
Indeed, let $\delta \in (0,1)$ be arbitrarily given. Then, from the standard
form of Young's inequality, we have
\begin{align*}
 ab 
 = (\delta a) \dfrac b \delta 
 &\leq \dfrac{\delta^{p(x)}}{p(x)} a^{p(x)} + \dfrac 1 {p'(x)
 \delta^{p'(x)}} b^{p'(x)} \\
 &\leq \dfrac{\delta^{p^-}}{p^-} a^{p(x)}
+ \dfrac{1}{(p^+)' \delta^{(p^-)'} } b^{p'(x)}.
\end{align*}
For each $\vep > 0$, take a constant $\delta_\vep \in (0,1)$ such that
$\vep \geq \delta_\vep^{p^-}/p^-$. Then \eqref{Young} follows with a
constant $C_\vep := ((p^+)' \delta_\vep^{(p^-)'})^{-1} \geq 0$.


\section{Reduction to an abstract evolution equation}\label{S:red}


\subsection{Setting of spaces and potentials}\label{Ss:space}

We set $V = L^{p(x)}(\Omega)$ and $X = W^{1,m(x)}_0(\Omega)$ with norms
$\|u\|_V := \|u\|_{L^{p(x)}(\Omega)}$ and $\|u\|_X := \|\nabla
u\|_{L^{m(x)}(\Omega)}$, respectively. Moreover, we write
$$
\langle v, u \rangle_V = \int_\Omega u(x) v(x) ~\d x
\quad \mbox{ for all } \ u \in V, \ v \in V^* = L^{p'(x)}(\Omega).
$$
By assumption \eqref{H2} along with Proposition \ref{P:SP}, it follows that
$$
X \hookrightarrow V \quad \mbox{ and } \quad V^* \hookrightarrow X^*
$$
where the embeddings are continuous and, in view of
\cite[Thm.~3.4.12, p.~90]{DHHR11}, they are also dense.

Define functionals $\psi$ and $\phi$ on $V$ by
$$
\psi(u) := \int_\Omega \dfrac 1 {p(x)} |u(x)|^{p(x)} \d x
	    \quad \mbox{ for } \ u \in V
$$
and
$$
\phi(u) := \begin{cases}
	       \displaystyle \int_\Omega \dfrac 1 {m(x)} |\nabla
	       u(x)|^{m(x)} \d x
	       &\mbox{ if } \ u \in X,\\
	       \infty &\mbox{ if } \ u \in V \setminus X.
	      \end{cases}
$$
Here and henceforth, we use $\partial_\Omega$ for the subdifferential
in $V=L^{p(x)}(\Omega)$ and $\partial_Q$ for the subdifferential in
$L^{p(x)}(Q)$ when any confusion may arise.

Then, $\psi$ is Fr\'echet differentiable on $V$ 
and we  find  that $\partial_\Omega \psi(u) = 
\d \psi(u) = |u|^{p(x)-2}u$ for any $u\in V$.

On the other hand,  $\phi$ is proper, lower semicontinuous and
convex in $V$. The lower semicontinuity can be proved in a standard way
(see Lemma 3.2 of~\cite{AM13}). Moreover, it holds that 
$\partial_\Omega \phi(u) = - \Delta_{m(x)} u$ with the domain
$$
  D(\partial_\Omega \phi) = \left\{ u \in X : 
   - \Delta_{m(x)} u \in L^{p'(x)}(\Omega) \right\}   
$$
incorporating the boundary condition \eqref{bc} in the 
sense of traces. Actually, the restriction $\phi|_X$ of 
$\phi$ to $X$ is also Fr\'echet differentiable in
$X$ with the representation $\d (\phi|_X)(u) = -\Delta_{m(x)} u$; 
moreover, $\partial_\Omega \phi(u) = \d (\phi|_X) (u)$ for any $u \in
D(\partial_\Omega \phi)$. 

The above defined operators permit to reduce \eqref{pde}--\eqref{ic} 
into the following doubly nonlinear evolution equation:
\begin{align}
 \partial_\Omega \psi(u'(t)) + \partial_\Omega \phi(u(t)) = Pf(t) \
 \mbox{ in } \ V^*, \quad 0 < t < T, \label{ee}\\
 u(0) = u_0.
\end{align}
The operator $P$ represents pointwise in time evaluation for 
functions of space-time variables and will be more precisely defined
later on. Here, we notice that $Pf(t) := f(\cdot,t) \in V^*$ 
for a.e.~$t\in(0,T)$ (this follows from (H3) and 
Proposition \ref{P:ident} below).

As mentioned in the Introduction, we shall work in a mixed frame of
Lebesgue-Bochner space $L^p(0,T;L^p(\Omega))$ and Lebesgue space
$L^p(Q)$ with $Q = \Omega \times (0,T)$.
However, these classes of spaces are originally 
defined in a different way and their identification is delicate,
particularly in the variable-exponent setting, 
in view of the different types of \emph{measures} involved.
In the Appendix we will present a review of the underlying
theory by emphasizing the additional difficulties occurring
in the variable exponent case. A crucial role will be played 
by the \emph{pointwise evaluation}
operator $P:L^1(Q) \to L^1(0,T;L^1(\Omega))$ (as $|\Omega|<\infty$,
$L^1$ is the largest space), defined by
$Pu(t) := u(\cdot,t)$ for $t \in (0,T)$,
which permits to pass
from Lebesgue functions of space-time variables to 
Lebesgue-Bochner vector-valued functions. Its properties
are summarized in the following proposition, whose
proof is postponed to the Appendix, where an extended survey
of the properties of $P$ is presented. Here and henceforth, we
simply write $Pu(t)$ and $P^{-1}u(x,t)$ instead of $(Pu)(t)$ and
$(P^{-1}u)(x,t)$, respectively. 
\begin{proposition}\label{P:ident}
 For any constant exponent $1 \leq p < \infty$ and variable one 
 $p(x)$ with $1 \leq p^- \leq p^+ < \infty$, the following 
 {\rm (i)--(iv)} hold true\/{\rm :}
\begin{enumerate}
 \item The operator $P$ is a linear, bijective, isometric mapping from
       $L^p(Q)$ to $L^p(0,T;L^p(\Omega))$. Furthermore, if $u \in
       L^{p(x)}(Q)$, then $Pu \in L^{p^-}(0,T;L^{p(x)}(\Omega))$.
 \item The inverse $P^{-1} : L^p(0,T;L^p(\Omega)) \to L^p(Q)$ is
       well-defined, and for each $u = u(t) \in L^p(0,T;L^p(\Omega))$,
       it holds that $u(t) = P^{-1}u (\cdot,t)$ for a.e.~$t \in (0,T)$.
 \item If $u \in L^{p(x)}(Q)$ with $\partial_t u \in L^{p(x)}(Q)$,
       then $Pu$ belongs to the space
       $W^{1,p^-}(0,T;L^{p(x)}(\Omega))$ and $(Pu)' = P(\partial_t u)$.
 \item If $u \in W^{1,p}(0,T;L^p(\Omega))$, then $\partial_t (P^{-1}u)$
       belongs to $L^p(Q)$ and coincides with $P^{-1}(u')$, where $u'$
       denotes the derivative of $u : (0,T) \to V$ in time. 
\end{enumerate}
\end{proposition}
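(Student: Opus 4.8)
The plan is to prove the four assertions in sequence, deriving (ii)--(iv) from the isometry and measurability secured in (i). The analytic inputs are the Fubini--Tonelli theorem, the Pettis measurability theorem (applicable since $L^p(\Omega)$ and $L^{p(x)}(\Omega)$ are separable for finite exponents), and the modular estimate of Proposition~\ref{P:px-est}.

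For the constant-exponent part of (i), I would first record that $Pu$ is strongly measurable: by Pettis' theorem it suffices, thanks to separability of $L^p(\Omega)$, to check weak measurability, and for every $\xi \in L^{p'}(\Omega)$ the map $t \mapsto \langle \xi, Pu(t)\rangle = \int_\Omega \xi(x) u(x,t)\,\d x$ is measurable by Fubini. The isometry then follows from Tonelli,
\[
\|u\|_{L^p(Q)}^p = \int_0^T \int_\Omega |u(x,t)|^p \,\d x\,\d t = \int_0^T \|Pu(t)\|_{L^p(\Omega)}^p \,\d t = \|Pu\|_{L^p(0,T;L^p(\Omega))}^p,
\]
while linearity is clear and surjectivity is supplied by (ii). For the \emph{loss-of-integrability} clause, the same Pettis argument (now with $L^{p(x)}(\Omega)$ separable and tested against $L^{p'(x)}(\Omega)$) gives strong measurability of the $L^{p(x)}(\Omega)$-valued map $Pu$, and the precise exponent is extracted from Proposition~\ref{P:px-est}. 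Writing $a(t):=\|u(\cdot,t)\|_{L^{p(x)}(\Omega)}$ and $b(t):=\int_\Omega |u(x,t)|^{p(x)}\,\d x$, that proposition yields $\min\{a(t)^{p^-},a(t)^{p^+}\}\le b(t)$; a case distinction on whether $a(t)\ge 1$ or $a(t)<1$ then gives the pointwise bound $a(t)^{p^-}\le b(t)+1$, and since $b\in L^1(0,T)$ by Fubini, integration in $t$ yields $Pu \in L^{p^-}(0,T;L^{p(x)}(\Omega))$. The exponent $p^-$ is forced by the regime $a(t)\ge 1$.

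For (ii) I would build the inverse explicitly: given $u\in L^p(0,T;L^p(\Omega))$, choose simple functions $u_n=\sum_k \mathbf{1}_{E_k}v_k$ converging to $u$ a.e., set $\tilde u_n(x,t):=u_n(t)(x)$, observe these are jointly measurable, and pass to the limit to obtain a jointly measurable $\tilde u$ with $\tilde u(\cdot,t)=u(t)$ a.e.; the isometry of (i) gives $\tilde u\in L^p(Q)$ and $P\tilde u=u$, so $P^{-1}u=\tilde u$ with matching slices. For (iii) and (iv) the common device is to bridge the distributional $t$-derivative on $Q$ and the vector-valued weak derivative on $(0,T)$ through product test functions $\Phi(x,t)=\varphi(x)\eta(t)$, $\varphi\in C_c^\infty(\Omega)$, $\eta\in C_c^\infty(0,T)$. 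In (iii), membership $u,\partial_t u\in L^{p(x)}(Q)\subset L^1(Q)$ together with the first part places both $Pu$ and $P(\partial_t u)$ in $L^{p^-}(0,T;L^{p(x)}(\Omega))$; testing the definition of $\partial_t u$ against such $\Phi$ and applying Fubini shows that $t\mapsto\langle\varphi,Pu(t)\rangle$ has scalar weak derivative $t\mapsto\langle\varphi,P(\partial_t u)(t)\rangle$, and since $C_c^\infty(\Omega)$ separates $L^{p(x)}(\Omega)$, the Bochner identity $\int_0^T Pu\,\eta' = -\int_0^T P(\partial_t u)\,\eta$ follows, i.e. $(Pu)'=P(\partial_t u)$. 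Part (iv) is the mirror image: pairing the vector identity $\int_0^T u\,\eta'=-\int_0^T u'\,\eta$ in $L^p(\Omega)$ with $\varphi\in C_c^\infty(\Omega)$ and invoking Fubini produces the product-test identity for $P^{-1}u$ and $P^{-1}(u')$, which extends to all of $C_c^\infty(Q)$ by density of finite sums of tensor products, giving $\partial_t(P^{-1}u)=P^{-1}(u')\in L^p(Q)$.

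The main obstacle I expect is the measurability bookkeeping in the variable-exponent setting: one must ensure that ``taking the $t$-slice'' genuinely lands in the Bochner space, i.e. that $Pu$ is \emph{strongly} (Bochner) measurable with values in $L^{p(x)}(\Omega)$ rather than merely weakly measurable, and conversely that reconstructing a space--time function preserves joint measurability. Separability of the Nakano spaces combined with Pettis' theorem is what makes this work, but care is needed because $L^{p(x)}(Q)$ is governed by a modular rather than by a clean product-measure structure, so the Fubini factorization available for constant $p$ must be replaced by the one-sided estimate of Proposition~\ref{P:px-est}, which is exactly what produces the loss of integrability.
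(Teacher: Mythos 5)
Your proposal is correct and follows essentially the same route as the paper's own proof: Pettis' theorem plus Fubini for strong measurability of the slices, the modular estimate of Proposition~\ref{P:px-est} with the case split $a(t)\ge 1$ versus $a(t)<1$ to explain the drop to $L^{p^-}(0,T;L^{p(x)}(\Omega))$, simple-function approximation combined with the isometry to construct $P^{-1}$ (the paper's Lemma~\ref{L:simple} is exactly your ``joint measurability of $\tilde u_n$'' step, and its Cauchy-in-$L^p(Q)$ argument is the precise form of your ``pass to the limit''), and product test functions $\varphi(x)\eta(t)$ with Fubini and density of $C^\infty_0(\Omega)$ in $L^{p'(x)}(\Omega)$ for the derivative identities. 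The only cosmetic deviation is in (iv), where you extend from tensor products to all of $C^\infty_0(Q)$ by sequential density, while the paper instead integrates by parts in the Bochner sense directly against $P\phi$ for arbitrary $\phi\in C^\infty_0(Q)$; both are standard and equivalent in effort.
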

To be more precise (c.f., e.g., (ii) above), 
for each exponent $p$ (or $p(x)$), we should use a different
notation of the operators defined above.
However, for simplicity, we shall always
write $P$ and $P^{-1}$ regardless of $p$.

Set $\mathcal V := L^{p(x)}(Q)$ with
the norm $\|u\|_{\mathcal V} := \|u\|_{L^{p(x)}(Q)}$ and $\langle u, v
\rangle_{\mathcal V} := \iint_Q u(x,t)v(x,t) ~\d x \, \d t$
whenever $u\in \mathcal V$ and $v \in \mathcal V^* = L^{p'(x)}(Q)$.
Define functionals $\Psi$ and $\Phi$ on $\mathcal V$ as
$$
\Psi(u) := \iint_Q \dfrac 1 {p(x)} |u(x,t)|^{p(x)} \d x \, \d t
= \int^T_0 \psi(Pu(t)) ~\d t,
$$
where the latter equality follows from Fubini's lemma and the fact $u
\in L^{p(x)}(Q)$, and
$$
\Phi(u) := \begin{cases} 
	    \displaystyle \int^T_0 \phi(Pu(t)) ~\d t
	    &\mbox{ if } \ Pu(t) \in X \ \mbox{
	    for a.e. } t \in (0,T), \ t \mapsto \phi(Pu(t)) \in L^1(0,T),\\
	    \infty &\mbox{ otherwise} 
	   \end{cases}
$$
for $u \in \mathcal V$. Then $\Psi$ is Fr\'echet differentiable and
convex in $\mathcal V$ (hence $D(\Psi) = \mathcal V$) and $\Phi$ is
proper, lower semicontinuous and convex in $\mathcal V$ with 
$D(\Phi) = \{u \in \mathcal V \colon Pu(t) \in
X \ \mbox{ for a.e.~} t \in (0,T) \mbox{ and } t \mapsto
\phi(Pu(t)) \in L^1(0,T) \}$. To prove the lower semicontinuity of
$\Phi$ on $\mathcal V$, it suffices to check it on a larger space
$L^1(0,T;V)$ which continuously embeds $\mathcal V$ (see (ii) of Lemma
\ref{L:rel-sp} below) by using the lower semicontinuity of $\phi$ in
$V$. The details, quite standard, are given in Appendix~\ref{app:B} below.
The subdifferential operator $\partial_Q \Psi : \mathcal V \to \mathcal
V^*$ of $\Psi$ is formulated as
$$
\partial_Q \Psi(u) := \left\{
\xi \in \mathcal V^* \colon
\Psi(v)-\Psi(u) \geq \iint_Q \xi (v - u) ~\d x \, \d t
\quad \mbox{ for all } \ v \in D(\Psi)
\right\}
$$
with domain $D(\partial_Q \Psi) := \{u \in D(\Psi) \colon \partial_Q \Psi
(u) \neq \emptyset\}$,
and $\partial_Q \Phi$ can be also defined analogously.

In the constant exponent case, a similar extension of convex functionals
onto Lebesgue-Bochner spaces (e.g., $L^p(0,T;V)$) is a standard issue.
On the other hand, in our case $\mathcal V = L^{p(x)}(Q)$, the 
given extensions $\Phi$ and $\Psi$ of $\phi$ and $\psi$
do not correspond to those provided by the standard theory. 
Correspondingly, 
some basic properties of these functionals (like, e.g., 
subdifferentials, or regularizations) need to be properly
analyzed.

The following relations will be frequently
used in the sequel:
\begin{enumerate}
 \item For $u \in \mathcal V$ and $\xi \in \mathcal V^*$, 
       \begin{align*}
       [u,\xi] \in \partial_Q \Phi \quad \mbox{ if and only if } \quad
       [Pu(t),P\xi(t)] \in \partial_\Omega \phi \ \mbox{ for a.e. }  
       t \in (0,T), \\ 
	\mbox{ i.e., }
       - \Delta_{m(x)} u(x,t) = \xi(x,t) \ \mbox{ for a.e. } 
       (x,t) \in Q.
       \end{align*}
 \item For $u \in \mathcal V$ and $\eta \in \mathcal V^*$,
       \begin{align*}
       [u,\eta] \in \partial_Q \Psi \quad \mbox{ if and only if } \quad
       [Pu(t),P\eta(t)] \in \partial_\Omega \psi \ \mbox{ for a.e. }  
       t \in (0,T), \\ 
	\mbox{ i.e., }
       |u(x,t)|^{p(x)-2}u(x,t) = \eta(x,t) \ \mbox{ for a.e. } 
       (x,t) \in Q.
       \end{align*}
\end{enumerate}
The above properties (i)--(ii) will be proved
in the next subsection, where, actually, 
more general results will be presented.
Finally, on account of the previous discussion, we can restate 
equation \eqref{ee} in Lebesgue spaces of space-time variables 
as follows:
\begin{equation}\label{ee-2}
 \partial_Q \Psi(\partial_t (P^{-1}u)) + \partial_Q \Phi(P^{-1}u) = f \
  \mbox{ in } \mathcal V^*.
\end{equation}
Then $\hat u := P^{-1}u$ corresponds to a strong solution of
\eqref{pde}--\eqref{ic} as in Definition \ref{D:ssol}, provided that
$\hat u$ enjoys sufficient regularity.


\subsection{Representation of subdifferential operators associated with
  variable exponents}\label{Ss:repre}

In this section, we set $V=L^{p(x)}(\Omega)$ with
$1<p^-\le p^+<\infty$ (then $V$ turns out to be a reflexive and 
separable Banach space) and let $p'(x)$ be the (pointwise) 
conjugate exponent of $p(x)$, that is, $p'(x) := p(x)/(p(x)-1)$. We also
let $\varphi:V\to [0,\infty]$ be proper (i.e., $D(\varphi) \neq
\emptyset$; for simplicity, we assume $D(\varphi) \ni 0$),
convex and lower semicontinuous (note that here $\varphi$ is 
a \emph{generic} functional, non-necessarily corresponding
to the functional $\phi$ of our equation). Then,
we define a functional $\Phi$ on $\mathcal
V := L^{p(x)}(Q)$ by setting
\begin{equation}
 \label{Phi}
   \Phi(u):= \begin{cases}
	      \displaystyle \int^T_0 \varphi(Pu(t))\,\d t \ &\mbox{ if } \
	      \varphi(Pu(\cdot)) \in L^1(0,T),\\
	      \infty &\mbox{ otherwise.}
	     \end{cases}
\end{equation}
Then $\Phi$ is proper, lower semicontinuous and convex.
As expected, we have
\begin{lemma}\label{lemmaext}
For $u \in \mathcal V$, $\xi \in \mathcal V^*$ with $1 < p^- \leq p^+ <
 \infty$, the following property holds\/{\rm :}
 \begin{equation}\label{equiv}
   \xi \in \partial_Q \Phi(u) \quad \mbox{ if and only if } \quad
    P\xi(t) \in \partial_\Omega \varphi(Pu(t))~\,\text{for a.e.~}\,t\in(0,T).
 \end{equation}
\end{lemma}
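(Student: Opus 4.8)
The plan is to prove the two implications in \eqref{equiv} separately: the ``if'' part by direct integration of the pointwise inequality, and the ``only if'' part by a localization-in-time argument followed by a separability argument. Throughout, Proposition~\ref{P:ident} guarantees that $Pu \in L^{p^-}(0,T;V)$ and $P\xi \in L^{(p')^-}(0,T;V^*)$ are strongly measurable vector-valued functions, so that $t \mapsto \langle P\xi(t), Pu(t)\rangle_V$ and $t \mapsto \varphi(Pu(t))$ are measurable; moreover, the variable-exponent H\"older inequality (Proposition~\ref{P:Holder}) gives $\xi\,v \in L^1(Q)$ for every $v \in \mathcal V$, which legitimizes the Fubini interchanges below.

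For the ``if'' part, I would first test the pointwise inclusion against $0 \in D(\varphi)$ to obtain $0 \le \varphi(Pu(t)) \le \varphi(0) + \langle P\xi(t), Pu(t)\rangle_V$ for a.e.\ $t$; since the right-hand side is integrable in $t$, this yields $\varphi(Pu(\cdot)) \in L^1(0,T)$, i.e.\ $u \in D(\Phi)$. Then, for arbitrary $w \in \mathcal V$, the subgradient inequality is trivial when $\Phi(w)=\infty$; when $\Phi(w)<\infty$ one has $Pw(t) \in D(\varphi)$ for a.e.\ $t$, so inserting $v=Pw(t)$ into the pointwise subdifferential inequality and integrating over $(0,T)$ gives $\Phi(w)-\Phi(u) \ge \int_0^T \langle P\xi(t), Pw(t)-Pu(t)\rangle_V\,\d t = \langle \xi, w-u\rangle_{\mathcal V}$, the last equality by Fubini. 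This is exactly $\xi \in \partial_Q\Phi(u)$.

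For the ``only if'' part, assume $\xi \in \partial_Q\Phi(u)$ (so $u \in D(\Phi)$). Fix $v \in D(\varphi)$ and a measurable $E \subseteq (0,T)$, and test with $w := v\,\mathbf 1_E + u\,\mathbf 1_{(0,T)\setminus E}$. Since $v \in V$ and $|E|<\infty$, this $w$ lies in $\mathcal V$ and satisfies $\Phi(w) = |E|\,\varphi(v) + \int_{(0,T)\setminus E}\varphi(Pu(t))\,\d t < \infty$. Substituting $w$ into the defining inequality for $\partial_Q\Phi$ and cancelling the contribution on $(0,T)\setminus E$ reduces it to $\int_E\big[\varphi(v)-\varphi(Pu(t))-\langle P\xi(t), v-Pu(t)\rangle_V\big]\,\d t \ge 0$. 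As $E$ is arbitrary and the bracketed integrand belongs to $L^1(0,T)$, the integrand must be nonnegative for a.e.\ $t$, which is the pointwise subdifferential inequality for this fixed $v$, valid outside a null set $N_v$ depending on $v$.

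The main obstacle is to upgrade ``for each fixed $v$, a.e.\ $t$'' to ``for a.e.\ $t$, all $v$,'' since naive approximation of a general $v\in D(\varphi)$ fails: lower semicontinuity controls $\liminf\varphi(v_k)$ in the wrong direction. I would resolve this via separability. As $V\times\mathbb R$ is separable, the closed convex set $\mathrm{epi}(\varphi)$ admits a countable dense subset; letting $\{v_k\}$ be its $V$-components and $N := \bigcup_k N_{v_k}$ (a null set), each $v \in D(\varphi)$ is reached by some $(v_{k_j}, a_{k_j}) \to (v,\varphi(v))$, which forces $v_{k_j}\to v$ in $V$ and, using $\varphi(v_{k_j})\le a_{k_j}$ together with lower semicontinuity, the \emph{energy recovery} $\varphi(v_{k_j})\to\varphi(v)$. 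For $t\notin N$ the inequality holds for every $v_k$, and passing to the limit along such a recovery sequence extends it to all $v\in D(\varphi)$ (it holds trivially for $v\notin D(\varphi)$). Hence $P\xi(t)\in\partial_\Omega\varphi(Pu(t))$ for a.e.\ $t$, which completes the proof.
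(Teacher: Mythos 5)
Your proof is correct, but it follows a genuinely different route from the paper's. The paper proves only the easy inclusion directly (pointwise inclusion $\Rightarrow$ $\xi \in \partial_Q \Phi(u)$, by integration and Fubini, essentially your ``if'' part) and then gets the converse \emph{for free} from maximal monotonicity: it introduces the operator $Z_\Omega v = |v|^{p(x)-2}v$ as a substitute for the duality map, solves $Z_\Omega(u(t)) + \partial_\Omega\varphi(u(t)) \ni Pf(t)$ pointwise in time for arbitrary $f \in \mathcal V^*$, proves strong measurability of the solution via demicontinuity of $(Z_\Omega + \partial_\Omega\varphi)^{-1}$, and concludes that the pointwise-defined operator is maximal monotone, hence coincides with $\partial_Q\Phi$. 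You instead prove the hard inclusion directly: localization in time (testing with $w = v\,\mathbf 1_E + u\,\mathbf 1_{(0,T)\setminus E}$, which indeed lies in $D(\Phi)$ because the exponent $p(x)$ does not depend on $t$) yields the subgradient inequality for each fixed $v$ off a $v$-dependent null set, and the countable dense subset of $\mathrm{epi}(\varphi)$ — available since $V$ is separable for $p^+<\infty$ — upgrades this to a single null set; your handling of the lower-semicontinuity obstruction via approximation in the epigraph (so that $\varphi(v_{k_j}) \to \varphi(v)$, or simply $a_{k_j} \ge \varphi(v_{k_j})$ with $a_{k_j}\to\varphi(v)$) is exactly right. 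Your argument is more elementary and self-contained: it needs no surjectivity characterization of maximality, no demicontinuity, and no measurability-of-selections machinery. What the paper's approach buys is reusability: the operators $Z_\Omega$, $Z_Q$, the modified resolvent, and the demicontinuity/measurability technique introduced in that proof are precisely the tools used later (in the modified Moreau--Yosida section, e.g.\ to show $P^{-1}\hat u_\lambda \in \mathcal V$ in Lemma~\ref{lemmayos}), so the paper's detour through maximality is an investment rather than an inefficiency. One point you assert rather than prove is the measurability of $t \mapsto \varphi(Pu(t))$; this is standard (lower semicontinuous functions are Borel, and compositions of Borel maps with strongly measurable functions are measurable, or one can regularize as in the paper's Appendix~\ref{app:B}), so it is a presentational rather than a mathematical gap.
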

\begin{proof}
Define the operator $A_{\text{ext}}:\mathcal V\to \mathcal V^*$ as
\begin{equation}\label{co11}
  \xi \in A_{\text{ext}}(u) \quad
   \stackrel{\text{define}}\Longleftrightarrow \quad
   P\xi(t) \in \partial_\Omega \varphi(Pu(t))~\,\text{for a.e.~}\,t\in(0,T),
\end{equation}
where a function $u\in \mathcal V$ belongs to
the domain $D(A_{\text{ext}})$ whenever
$Pu(t) \in D(\partial_\Omega \varphi)$ for a.e.~$t \in (0,T)$
and, moreover, there exists a function $\xi \in \mathcal V^*$ 
such that $P\xi(t) \in \partial_\Omega \varphi(Pu(t))$ 
for a.e.~$t \in (0,T)$.
Then one readily verifies by integration in time and Fubini's lemma that
$A_{\text{ext}} \subset \partial_Q \Phi$.
So it remains to prove that $A_{\text{ext}}$ is maximal. To 
this aim, we observe that the operator
\begin{equation}\label{ZO}
  Z_\Omega:V \to V^*, \quad
   (Z_\Omega v)(x) := |v(x)|^{p(x)-2} v(x)
\end{equation}
is strictly monotone, bounded (cf.~Lemma \ref{L:dps-bdd} below),
 continuous, and coercive. The same happens, of course, for 
\begin{equation}\label{ZQ}
  Z_Q: \mathcal V \to \mathcal V^*, \quad
   (Z_Q v)(x,t) := |v(x,t)|^{p(x)-2} v(x,t).
\end{equation}
Here we also remark that 
$$
Z_\Omega(Pu(t)) = P (Z_Q(u))(t) \quad \mbox{ for } \ u \in \mathcal V.
$$
Properly modifying the proof of \cite[Chap.~II, Theorem 1.2,
 p.~39]{B76}, one can see that a (possibly) multivalued monotone
 graph $A$ in $V \times V^*$ (resp., $\mathcal V \times \mathcal V^*$)
 is maximal if and only if $A + \lambda Z_\Omega$ (resp., $A + \lambda Z_Q$) is
 surjective for some $\lambda>0$. In other words, one can use $Z_\Omega$
 (resp., $Z_Q$) in place of the \emph{duality mapping} between $V$ and $V^*$
 (resp., $\mathcal V$ and $\mathcal V^*$), which behaves badly with
 respect to integration in time. 
 Now, we are in position to prove that $A_{\text{ext}}$ is maximal in
 $\mathcal V \times \mathcal V^*$. Let $f \in \mathcal V^*$. Then, by
 Proposition \ref{P:ident}, $Pf$ belongs to
 $L^{(p^+)'}(0,T;V^*)$. Since $\partial_\Omega \varphi$ is maximal monotone
 in $V \times V^*$, one can uniquely take $u(t) \in D(\partial_\Omega \varphi)$
 such that
\begin{equation}\label{Zom}
 Z_\Omega \left(u(t)\right) + \partial_\Omega \varphi \left(u(t)\right) 
\ni Pf(t)
\quad \mbox{ for a.e. } t \in (0,T).
\end{equation}
The next task consists in proving that $u: t \mapsto u(t)$
is strongly measurable with values in $V$. 

Indeed, equation \eqref{Zom} can be rewritten as
$$
u(t) = \left(Z_\Omega + \partial_\Omega \varphi\right)^{-1} \left( Pf(t)
 \right).
$$
Let us now show that $T_\Omega := (Z_\Omega + \partial_\Omega
 \varphi)^{-1}$ is demicontinuous from $V^*$ into $V$. Indeed, let $g_n
 \to g$ strongly in $V^*$ and put $w_n := T_\Omega g_n$ and $w :=
 T_\Omega g$. Then, rewriting \eqref{Zom} with $u(t)$ and
 $Pf(t)$ replaced, respectively, by $w_n$ and $g_n$,
 and multiplying by $w_n$, we have
\begin{align}
\int_\Omega |w_n(x)|^{p(x)} \d x + \varphi(w_n) 
 &\leq \varphi(0) + \langle g_n, w_n \rangle_V \nonumber\\
 &\stackrel{\eqref{Young}}{\leq} \varphi(0) + \dfrac 1 2 \int_\Omega
 |w_n(x)|^{p(x)} \d x + C \int_\Omega |g_n(x)|^{p'(x)} \d x.
 \label{est-wg}
\end{align}
This relation, together with Proposition \ref{P:px-est}, ensures that $w_n$ is
 bounded in $V$ uniformly with $n$. 
 Hence, by subtraction of equations and multiplication by $w_n
 - w$, we get
$$
\left\langle Z_\Omega w_n - Z_\Omega w, w_n - w \right\rangle_V
\leq \langle g_n - g , w_n - w \rangle_V \to 0,
$$
which along with the definition of $Z_\Omega$ gives
$$
\left( |w_n(x)|^{p(x)-2}w_n(x) - |w(x)|^{p(x)-2}w(x) \right)
\left( w_n(x) - w(x) \right) \to 0
\quad \mbox{ for a.e. } x \in \Omega.
$$
Then, for almost all fixed $x \in \Omega$, $w_n(x)$ is uniformly bounded
and converges to $w(x)$. The combination of uniform boundedness in $V$
and pointwise convergence implies that (the whole sequence)
$w_n$ tends to $w$ weakly in $V$. Hence, $T_\Omega$ is demicontinuous 
from $V^*$ to $V$.

Now, let $h_n \in C([0,T];V^*)$ be such that $h_n \to Pf$ strongly in
 $L^{(p^+)'}(0,T;V^*)$. Then, from the demicontinuity of $T_\Omega$, we see
 that $u_n := T_\Omega(h_n(\cdot))$ is weakly continuous with values in
 $V$ on $[0,T]$, and, hence, $u_n$ is strongly measurable by Pettis'
 lemma and the separability of $V$. Moreover, $u_n(t)$ converges to
 $u(t) = T_\Omega(Pf(t))$ weakly in $V$ for a.e.~$t \in (0,T)$, and,
 therefore, $u$ is also strongly measurable in $(0,T)$  with values in
 $V$. Repeating an estimate similar to \eqref{est-wg}, one finds that
 \begin{equation}\label{uLpQ}
   \int^T_0 \left( \int_\Omega \left| \left(u(t)\right)(x)\right|^{p(x)}
	   \d x \right) \d t < \infty,
 \end{equation}
 which particularly implies $u \in L^{p^-}(0,T;V) \subset
 L^1(0,T;L^1(\Omega))$. Furthermore, we get $\hat u := P^{-1} u \in
 \mathcal V$ by Fubini's lemma along with \eqref{uLpQ}. Then, by
 \eqref{Zom}, $\hat u$ solves $Z_Q(\hat u) + A_{\text{ext}}(\hat u) \ni
 f$. In particular, $\hat u \in D(A_{\text{ext}})$ since 
 $f - Z_Q(\hat u) \in \mathcal V^*$. 
 Therefore, $A_{\text{ext}}$ is maximal monotone in
 $\mathcal V \times \mathcal V^*$. Since the graph of
 $A_{\text{ext}}$ is contained in that of a (maximal) monotone operator
 $\partial_Q \Phi$, the two operators must coincide, as desired.
\end{proof}


\section{Proof of Theorem \ref{T:ex}}\label{S:th1}

This section is aimed at giving a proof of Theorem \ref{T:ex}. As 
mentioned in the Introduction, it is a major difference 
of this study from the constant exponent case (e.g.,~\cite{Colli}) 
that one has to work in a mixed framework of (generalized) Lebesgue 
spaces of \emph{space and time variables} and of Lebesgue-Bochner
spaces (i.e., vector-valued Lebesgue spaces). Particularly, it is a
crucial point how to incorporate chain rules for subdifferentials into
such a specific framework. So let us begin with the following
proposition:
\begin{proposition}[Chain rule for subdifferentials in a mixed
 frame]\label{P:chain}
 Let $p(\cdot) \in \mathcal P(\Omega)$ satisfy $1 < p^- \leq
 p^+ < \infty$.
 Let $u \in \mathcal V := L^{p(x)}(Q)$ be such that $\partial_t u \in
 \mathcal V$. Suppose that there exists $\xi\in\mathcal V^*=L^{p'(x)}(Q)$
 such that $\xi\in \partial_Q \Phi(u)$, where $\Phi$ is given by
 \eqref{Phi} for a proper lower semicontinuous convex
 functional $\varphi$ on $V:=L^{p(x)}(\Omega)$. Then, the function $t
 \mapsto \varphi(Pu(t))$ is absolutely continuous over $[0,T]$.
 Moreover, for  each~$t\in(0,T)$, we have
$$
\dfrac{\d}{\d t} \varphi(Pu(t)) = 
\left\langle \eta,(P u)'(t) \right\rangle_V
\quad \mbox{ for all } \ \eta \in \partial_\Omega \varphi(Pu(t)),
$$
whenever $Pu$ and $\varphi(Pu(\cdot))$ are differentiable at $t$.
In particular, for $0 \leq s < t \leq T$, we have
$$
\varphi(Pu(t))-\varphi(Pu(s))
= \iint_{\Omega \times (s,t)} \xi \partial_\tau u ~\d x \, \d \tau.
$$
\end{proposition}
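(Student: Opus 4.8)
The plan is to reduce the assertion to a one-dimensional calculus in time and then to establish absolute continuity by means of a suitably modified Moreau--Yosida regularization of $\varphi$ on $V$. First I would record the two structural facts that make this reduction possible. By Lemma~\ref{lemmaext}, the hypothesis $\xi\in\partial_Q\Phi(u)$ is equivalent to
$$
P\xi(t)\in\partial_\Omega\varphi(Pu(t))\quad\text{for a.e.~}t\in(0,T).
$$
By Proposition~\ref{P:ident}~(iii), the assumption $\partial_t u\in\mathcal V$ yields $Pu\in W^{1,p^-}(0,T;V)$ with $(Pu)'=P(\partial_t u)$, so that $Pu$ is absolutely continuous with values in $V$ and differentiable at a.e.~$t$. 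Finally, H\"older's inequality (Proposition~\ref{P:Holder}) on $Q$ shows that
$$
\tau\mapsto\langle P\xi(\tau),(Pu)'(\tau)\rangle_V=\int_\Omega \xi(x,\tau)\,\partial_\tau u(x,\tau)\,\d x
$$
belongs to $L^1(0,T)$, its modulus integrating to at most $2\|\xi\|_{\mathcal V^*}\|\partial_t u\|_{\mathcal V}$.

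The easy half is the pointwise differentiation formula. Fix $t$ at which both $Pu$ and $g(\cdot):=\varphi(Pu(\cdot))$ are differentiable and $P\xi(t)\in\partial_\Omega\varphi(Pu(t))$. For any $\eta\in\partial_\Omega\varphi(Pu(t))$ the subdifferential inequality gives $g(t+h)-g(t)\ge\langle\eta,Pu(t+h)-Pu(t)\rangle_V$ for every $h$; dividing by $h>0$ and by $h<0$ and letting $h\to0$, using the differentiability of $Pu$ at $t$, I obtain matching one-sided bounds that force
$$
\tfrac{\d}{\d t}\varphi(Pu(t))=\langle\eta,(Pu)'(t)\rangle_V .
$$
In particular the right-hand side is independent of the chosen section $\eta$ and equals $\langle P\xi(t),(Pu)'(t)\rangle_V$. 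This settles the pointwise statement; what is still missing is the \emph{absolute continuity} of $g$, which upgrades this a.e.~identity to the integral representation.

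To prove absolute continuity I would introduce a modified Moreau--Yosida regularization $\varphi_\lambda$ of $\varphi$ built on the modular $\psi(v)=\int_\Omega\frac1{p(x)}|v|^{p(x)}\,\d x$ rather than on the squared norm, that is, a regularization whose Fr\'echet derivative is governed by the superposition operator $Z_\Omega$ of \eqref{ZO} instead of the duality map. The decisive feature, already exploited in the proof of Lemma~\ref{lemmaext}, is the identity $Z_\Omega(Pu(t))=P(Z_Q(u))(t)$: since $Z_\Omega$ acts pointwise, the induced gradient $\nabla\varphi_\lambda$ commutes with $P$, whereas the canonical duality-map regularization does not. Granting from the modified convex-analysis calculus that $\varphi_\lambda$ is Fr\'echet differentiable, that $\varphi_\lambda\uparrow\varphi$ pointwise as $\lambda\downarrow0$, and that $\nabla\varphi_\lambda$ enjoys the appropriate monotonicity and boundedness, the chain rule for the \emph{smooth} functional $\varphi_\lambda$ composed with the Sobolev curve $Pu$ gives, for $0\le s<t\le T$,
$$
\varphi_\lambda(Pu(t))-\varphi_\lambda(Pu(s))=\int_s^t\langle\nabla\varphi_\lambda(Pu(\tau)),(Pu)'(\tau)\rangle_V\,\d\tau .
$$
Passing to the limit $\lambda\to0$ (using $\varphi_\lambda\uparrow\varphi$ pointwise on the left and dominated convergence on the right, the integrand being controlled in terms of the $L^1(0,T)$ pairing recorded above and converging by the section-independence established in the easy half) yields
$$
\varphi(Pu(t))-\varphi(Pu(s))=\int_s^t\langle P\xi(\tau),(Pu)'(\tau)\rangle_V\,\d\tau ,
$$
which exhibits $g$ as the integral of an $L^1$ function, hence absolutely continuous. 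Rewriting the pairing via Fubini's lemma as $\int_\Omega\xi\,\partial_\tau u\,\d x$ and integrating in $\tau$ gives the final identity $\varphi(Pu(t))-\varphi(Pu(s))=\iint_{\Omega\times(s,t)}\xi\,\partial_\tau u\,\d x\,\d\tau$.

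The main obstacle is the construction and analysis of the modified regularization $\varphi_\lambda$ and of its gradient in the variable-exponent space $V=L^{p(x)}(\Omega)$. In the constant-exponent theory one leans on the Hilbertian (or $p$-homogeneous) structure of the duality map, which fails here: the growth of $Z_\Omega$ is inhomogeneous in $x$, so the standard resolvent and Yosida estimates must be re-derived, and the indispensable commutation with $P$ must be secured by basing the entire construction on $Z_\Omega$. The delicate quantitative points are the uniform (in $\lambda$) bounds needed to justify the dominated-convergence passage in the displayed integral and the verification that $\nabla\varphi_\lambda(Pu(\tau))$ remains controlled by $P\xi(\tau)$ in the duality pairing against $(Pu)'(\tau)$.
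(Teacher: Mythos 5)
Your overall architecture coincides with the paper's: the two-sided subdifferential inequality for the pointwise formula, the modular-based Moreau--Yosida regularization $\varphi_\lambda$ whose gradient is built from $Z_\Omega$ and therefore commutes with $P$ (Lemma~\ref{lemmayos}), the chain rule for the regularized functional, and the final substitution $\eta = P\xi(t)$ via Lemma~\ref{lemmaext}. The gap sits exactly where the real difficulty of the proposition lies: the passage to the limit $\lambda \searrow 0$ in
$$
\varphi_\lambda(Pu(t))-\varphi_\lambda(Pu(s))
=\int_s^t\big\langle \partial_\Omega\varphi_\lambda(Pu(\tau)),(Pu)'(\tau)\big\rangle_V\,\d\tau .
$$
You justify convergence of the right-hand side by dominated convergence, with pointwise convergence of the integrand ``by the section-independence established in the easy half.'' This fails for two reasons. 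First, it is circular: the section-independence statement is conditional on $\varphi(Pu(\cdot))$ being differentiable at $\tau$, and almost-everywhere differentiability of this function is precisely what the absolute continuity you are trying to prove would deliver; before AC is established there is no set of full measure on which the easy half applies. Second, it is inapplicable: $\partial_\Omega\varphi_\lambda(Pu(\tau)) = A_\lambda(Pu(\tau))$ is a section of $\partial_\Omega\varphi$ at the point $J_\lambda(Pu(\tau))$, \emph{not} at $Pu(\tau)$, so even at points where the easy half were available it would say nothing about this particular family; and there is no off-the-shelf analogue here of the Hilbert-space fact that Yosida approximations converge to the minimal section, since the whole construction is based on the modified, modular-type resolvent.

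The paper closes this gap by a weak-compactness argument instead, and this is exactly what Lemma~\ref{L:Alam-bdd} was designed for: applying it to $\partial_Q\Phi$ with the hypothesized section $\xi$ gives the uniform modular bound $\iint_Q \frac 1{p'(x)}|\xi_\lambda|^{p'(x)}\,\d x\,\d t \leq \iint_Q \frac 1{p'(x)}|\xi|^{p'(x)}\,\d x\,\d t$ for $\xi_\lambda = \partial_Q\Phi_\lambda(u)$, hence boundedness of $\xi_\lambda$ in $\mathcal V^*$ and weak convergence of a subsequence to some $\overline\xi$. Since $\partial_t u\,\chi_{(s,t)}$ is a \emph{fixed} element of $\mathcal V$, the time integral converges by the weak-strong duality pairing over $Q$; no pointwise-in-$\tau$ convergence and no domination are needed. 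Note also that the paper never identifies $\overline\xi$ with $\xi$: absolute continuity follows because $\tau\mapsto\langle P\overline\xi(\tau),(Pu)'(\tau)\rangle_V$ is integrable (Fubini and $\overline\xi\,\partial_t u\in L^1(Q)$), and only \emph{afterwards} does the pointwise formula, now valid a.e., allow one to replace the derivative by $\langle P\xi(\tau),(Pu)'(\tau)\rangle_V$ and integrate. Your domination claim could in fact be repaired (apply Lemma~\ref{L:Alam-bdd} slicewise to $[Pu(\tau),P\xi(\tau)]\in\partial_\Omega\varphi$ and use Young's inequality \eqref{Young}), but the pointwise convergence cannot be obtained by the mechanism you cite, so the dominated-convergence route should be abandoned in favor of the weak-compactness one.
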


 This chain rule will be exploited at the end of the proof given
below for Theorem \ref{T:ex}, more precisely, for the identification of a limit
(see \S \ref{Ss:conv} for more details).


\subsection{Moreau-Yosida regularizations in variable exponent spaces}
\label{Ss:YM}

To prove Proposition \ref{P:chain}, let us introduce a variant of
\emph{Moreau-Yosida regularizations} (cf.~see~\cite{B76} for usual ones)
for functionals defined on variable exponent spaces. Namely, we set
\begin{equation}\label{philla}
  \varphi_\lambda(u):= \min_{v\in V}\left(
   \int_\Omega \frac \lambda {p(x)}
   \left|\dfrac{v(x)-u(x)}{\lambda}\right|^{p(x)} \,\d x
    + \varphi(v) \right)
    \quad \mbox{ for } \ u \in V,
\end{equation}
for $\varphi : V \to [0,\infty]$, and analogously,
\begin{equation}\label{Philla}
  \Phi_\lambda(u):= \min_{v\in \mathcal V}\left(
   \iint_Q \frac \lambda {p(x)}
   \left|\dfrac{v(x,t)-u(x,t)}{\lambda}\right|^{p(x)} \,\d x \, \d t
    + \Phi(v) \right)
    \quad \mbox{ for } \ u \in \mathcal V,
\end{equation}
for $\Phi : \mathcal V \to [0,\infty]$.
In the definition of $\varphi_\lambda$ and $\Phi_\lambda$, the position
of $\lambda$ is crucial, particularly in view of Lemma
\ref{L:Alam-bdd} below, due to the presence of variable
exponents. On the other hand, for constant exponent cases, the position
and the power of $\lambda$ in 
Moreau-Yosida regularizations would not be
a problem even in the $L^p$-framework, and, in fact, $\varphi_\lambda$ is
defined in a simpler way.

Moreover, define the \emph{modified resolvent} $J_\lambda : V \to V$ of
 $A := \partial_\Omega \varphi$ by setting $J_\lambda u := u_\lambda$,
 which is a unique solution of the equation
\begin{equation}\label{def-J}
Z_\Omega \left(\dfrac{u_\lambda - u}{\lambda}\right) 
+ A(u_\lambda) \ni 0 \quad \mbox{ in } \ V^*
\end{equation}
(here $Z_\Omega$ is defined as in Lemma \ref{lemmaext}). 
The \emph{modified Yosida approximation} $A_\lambda : V \to V^*$ of
$A$ is given by
$$
A_\lambda (u) := Z_\Omega\left(\dfrac{u - J_\lambda u}{\lambda}\right)
\in A(J_\lambda u)
\quad \mbox{ for each } \ u \in V.
$$
Then one can prove as in~\cite{B76} that $A_\lambda$ is single-valued
and monotone, and, furthermore, $J_\lambda$ and $A_\lambda$ are
demicontinuous. These notions and properties are still available for
general maximal monotone operators $A : V \to V^*$. Moreover,
analogue properties hold in the frame of $\mathcal V = L^{p(x)}(Q)$ 
as well.

Going back to the modified Moreau-Yosida regularization and following
the lines of \cite[Theorem~II.2.2, p.~57]{B76} with the proper
adaptations, we can also verify that $\varphi_\lambda$ is convex,
 continuous  and G\^ateaux differentiable in $V$. Moreover,
the subdifferential (= G\^ateaux derivative) $\partial_\Omega
\varphi_\lambda$ of $\varphi_\lambda$ coincides with the modified
Yosida approximation $A_\lambda = (\partial_\Omega \varphi)_\lambda$ of
$\partial_\Omega \varphi$.
Furthermore, the infimum in \eqref{philla} is achieved at $v = J_\lambda
u$, namely,
$$
\varphi_\lambda(u) = 
   \int_\Omega \frac \lambda {p(x)}
   \left|\dfrac{J_\lambda u(x)-u(x)}{\lambda}\right|^{p(x)} \,\d x
    + \varphi(J_\lambda u).
$$
Hence we have $D(\varphi_\lambda) = V$ and $\varphi(J_\lambda u) \leq
\varphi_\lambda (u) \leq \varphi(u)$ for any $u\in V$, which implies 
\begin{equation}\label{colla}
 \varphi_\lambda(u) \to \varphi(u) \quad \mbox{ for all } \  u\in V,
\end{equation}
since $J_\lambda u \to u$ strongly in $V$ for $u \in D(\varphi)$.
A further notable property is given by the following
\begin{lemma}\label{lemmayos}
 Noting as $\Phi_\lambda$ the Moreau-Yosida regularization of $\Phi$ 
 in $\mathcal V$ and as $\varphi_\lambda$ the Moreau-Yosida
 regularization of $\varphi$ in $V$, we have the relation
 \begin{equation}\label{phillaext}
   \Phi_\lambda(u) = \int^T_0 \varphi_\lambda(Pu(t))\,\d t
    \quad\mbox{ for all } \  u\in \mathcal V.
 \end{equation}
 In particular, for $u \in \mathcal V$ and $\xi \in \mathcal V^*$, {\rm
 Lemma~\ref{lemmaext}} ensures that 
$$
\xi_\lambda = \partial_Q \Phi_\lambda(u)
\quad \mbox{ if and only if } \quad 
P\xi_\lambda(t) = \partial_\Omega \varphi_\lambda(Pu(t))
 \ \mbox{ for almost all } t\in(0,T). 
$$
In other words, also in the variable exponent setting, Moreau-Yosida 
 regularization and integration in time commute. 
\end{lemma}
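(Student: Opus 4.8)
The plan is to establish \eqref{phillaext} by proving the two inequalities $\Phi_\lambda(u) \ge \int_0^T \varphi_\lambda(Pu(t))\,\d t$ and $\Phi_\lambda(u) \le \int_0^T \varphi_\lambda(Pu(t))\,\d t$ separately. The first is the easy direction, obtained by decoupling an \emph{arbitrary} competitor in \eqref{Philla} via Fubini; the second, which is the crux, requires exhibiting the actual minimizer of \eqref{Philla}, realised through the modified resolvent $J_\lambda$ of \eqref{def-J}, and showing that it decouples pointwise in time. Once the identity is proved, the ``in particular'' equivalence for Yosida approximations will follow by applying Lemma~\ref{lemmaext} to $\varphi_\lambda$ itself.

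\emph{Lower bound.} First I would take any $v \in \mathcal V$; if $v \notin D(\Phi)$ the inequality is trivial, so I may assume $v \in D(\Phi)$. Since $u,v \in L^{p(x)}(Q)$, Proposition~\ref{P:ident} gives $Pu(t),Pv(t) \in V$ for a.e.~$t$, and Fubini's lemma rewrites the functional minimised in \eqref{Philla} as $\int_0^T\big[\int_\Omega \tfrac{\lambda}{p(x)}|(Pv(t)-Pu(t))/\lambda|^{p(x)}\,\d x + \varphi(Pv(t))\big]\,\d t$. For a.e.~$t$ the bracket is the value, at the competitor $Pv(t)\in V$, of the functional defining $\varphi_\lambda(Pu(t))$ in \eqref{philla}, hence is bounded below by $\varphi_\lambda(Pu(t))$. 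Integrating in $t$ and taking the infimum over $v$ yields the lower bound.

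\emph{Upper bound.} Here I set $\hat v := P^{-1}\big(J_\lambda(Pu(\cdot))\big)$, so that for a.e.~$t$ the element $J_\lambda(Pu(t))$ is precisely the minimizer of the functional defining $\varphi_\lambda(Pu(t))$. Three points must be checked. (a) \emph{Measurability}: since $Pu \in L^{p^-}(0,T;V)$ is strongly measurable by Proposition~\ref{P:ident} and $J_\lambda$ is demicontinuous from $V$ to $V$, the map $t\mapsto J_\lambda(Pu(t))$ is weakly, hence (by separability of $V$ and Pettis' lemma) strongly measurable, exactly as argued in the proof of Lemma~\ref{lemmaext}. (b) \emph{Integrability of the value}: testing \eqref{philla} with the admissible competitor $v=0$, which is allowed since $0\in D(\varphi)$, yields a $\lambda$-dependent bound $\varphi_\lambda(Pu(t)) \le C_\lambda \int_\Omega |Pu(t)|^{p(x)}\,\d x + \varphi(0)$, which lies in $L^1(0,T)$ because $u\in\mathcal V$. (c) \emph{Membership in $\mathcal V\cap D(\Phi)$}: since $\varphi\ge 0$, both summands in $\varphi_\lambda(Pu(t)) = \int_\Omega \tfrac{\lambda}{p(x)}|(J_\lambda(Pu(t))-Pu(t))/\lambda|^{p(x)}\,\d x + \varphi(J_\lambda(Pu(t)))$ are nonnegative; the penalty term controls $\int_\Omega |J_\lambda(Pu(t))-Pu(t)|^{p(x)}\,\d x$ (here the placement of $\lambda$ is essential to keep the coefficient uniformly bounded below in $x$), and a convexity estimate $|a+b|^{p(x)}\le 2^{p^+-1}(|a|^{p(x)}+|b|^{p(x)})$ then gives $\iint_Q |J_\lambda(Pu)|^{p(x)}\,\d x\,\d t<\infty$, so $\hat v\in\mathcal V$ by Fubini and Proposition~\ref{P:ident}, while the second summand gives $t\mapsto\varphi(J_\lambda(Pu(t)))\in L^1(0,T)$, i.e.~$\hat v\in D(\Phi)$. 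With this choice, inserting $\hat v$ into \eqref{Philla} and applying Fubini returns exactly $\int_0^T \varphi_\lambda(Pu(t))\,\d t$, since the bracket now equals $\varphi_\lambda(Pu(t))$ at its minimizer; hence $\Phi_\lambda(u)\le \int_0^T \varphi_\lambda(Pu(t))\,\d t$, and \eqref{phillaext} follows.

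\emph{The ``in particular'' statement and the main obstacle.} Once \eqref{phillaext} holds, the equivalence for Yosida approximations is immediate: $\varphi_\lambda$ is itself a (finite, continuous) proper convex lower semicontinuous functional on $V$ with $\partial_\Omega\varphi_\lambda = A_\lambda$, its time-integrated extension in the sense of \eqref{Phi} is $\Phi_\lambda$ precisely by \eqref{phillaext}, and both subdifferentials are single-valued; so applying Lemma~\ref{lemmaext} with $\varphi$ replaced by $\varphi_\lambda$ yields $\xi_\lambda=\partial_Q\Phi_\lambda(u)$ if and only if $P\xi_\lambda(t)=\partial_\Omega\varphi_\lambda(Pu(t))$ for a.e.~$t$, which is the asserted commutation. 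I expect the main difficulty to be step (c) of the upper bound, namely verifying that the time-decoupled minimizer genuinely lands in $\mathcal V = L^{p(x)}(Q)$: it is exactly the variable-exponent placement of $\lambda$ in \eqref{philla}--\eqref{Philla}, together with the hypothesis $0\in D(\varphi)$, that makes the modular estimates close without any loss of integrability.
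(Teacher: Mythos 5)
Your proof is correct, but it reaches \eqref{phillaext} by a different mechanism than the paper. Both arguments share the same key construction --- the pointwise modified resolvent $\hat u_\lambda(t) := J_\lambda(Pu(t))$, its strong measurability via the demicontinuity of $J_\lambda$ together with Pettis' theorem (exactly as in Lemma~\ref{lemmaext}), and Fubini to decouple the modular --- but they diverge afterwards. The paper does not argue with two inequalities: it takes the actual minimizer $u_\lambda$ of \eqref{Philla}, shows via the commutation $Z_\Omega(Pw(t)) = P(Z_Q w)(t)$ and Lemma~\ref{lemmaext} that $P^{-1}\hat u_\lambda$ solves the space-time resolvent inclusion $Z_Q\bigl((P^{-1}\hat u_\lambda - u)/\lambda\bigr) + \partial_Q\Phi(P^{-1}\hat u_\lambda) \ni 0$, and concludes $P^{-1}\hat u_\lambda = u_\lambda$ by uniqueness of the solution to that inclusion; the identity then follows by comparing the two minimization formulas, and the needed integrability of $\varphi(\hat u_\lambda(\cdot))$ comes for free from $u_\lambda \in D(\Phi)$. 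Your route is purely variational: lower bound by decoupling an arbitrary competitor, upper bound by inserting the assembled pointwise minimizer $\hat v = P^{-1}\bigl(J_\lambda(Pu(\cdot))\bigr)$. This avoids invoking Lemma~\ref{lemmaext} and resolvent uniqueness for the identity itself, at the price of securing the $L^1(0,T)$ bound on $\varphi_\lambda(Pu(\cdot))$ by hand --- your $v=0$ test, which legitimately uses the standing assumption $0 \in D(\varphi)$, plus (for measurability) either your step (a) or simply the continuity of $\varphi_\lambda$ on $V$. What the paper's argument buys in exchange is a slightly stronger by-product: the modified resolvents themselves commute with $P$, i.e., the $Q$-level minimizer is exactly $P^{-1}J_\lambda(Pu(\cdot))$, a fact your two-inequality argument recovers only implicitly through strict convexity of the penalized functional. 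Your treatment of the ``in particular'' statement --- applying Lemma~\ref{lemmaext} to the finite, continuous convex functional $\varphi_\lambda$, whose time-integrated extension is $\Phi_\lambda$ by \eqref{phillaext}, and using single-valuedness of both subdifferentials --- coincides with the paper's.
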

\begin{proof}
Let $u \in \mathcal V$. Then, we know that $Pu(t)\in V$ for
 a.e.~$t\in(0,T)$. Moreover, we also have
\begin{equation}\label{Philla1}
  \Phi_\lambda(u) = 
   \iint_Q \frac{\lambda}{p(x)}
   \left|\dfrac{u(t,x)-u_\lambda(t,x)}{\lambda}\right|^{p(x)} \,\d x \, \d t 
   + \Phi(u_\lambda),
\end{equation}
where $u_\lambda \in D(\partial_Q \Phi)$ satisfies
\begin{equation}\label{Philla2}
  Z_Q \left(\dfrac{u_\lambda-u}{\lambda}\right) + \partial_Q
   \Phi(u_\lambda) \ni 0.
\end{equation}
Analogously, since $Pu(t) \in V$ for a.a.~$t\in(0,T)$, there exists
 $\hat u_\lambda(t) := J_\lambda (P u(t)) \in D(\partial_\Omega
 \varphi)$, where $J_\lambda$ stands for the modified resolvent of
 $\partial_\Omega \varphi$, such that
\begin{equation}\label{philla1}
  \varphi_\lambda(Pu(t)) = 
   \int_\Omega \frac{\lambda}{p(x)}\left|\dfrac{u(x,t)-P^{-1}\hat u_\lambda(x,t)}{\lambda}\right|^{p(x)} \,\d x
   + \varphi(\hat u_\lambda(t)).
\end{equation}
Here $\hat u_\lambda(t)$ satisfies, for a.e.~$t\in (0,T)$,
\begin{equation}\label{philla2}
  Z_\Omega \left(\dfrac{\hat u_\lambda(t) - Pu(t)}{\lambda}\right) 
   + \partial_\Omega
   \varphi(\hat u_\lambda(t)) \ni 0.
\end{equation}
Since $J_\lambda : V \to V$ is demicontinuous, one can prove that $P^{-1}
 \hat u_\lambda \in \mathcal V$ proceeding as in Lemma \ref{lemmaext}.
Integrating \eqref{philla1} in time and using Fubini's lemma, we obtain
\begin{equation}\label{philla3}
  \int^T_0\varphi_\lambda(Pu(t))\,\d t = 
   \iint_Q \frac{\lambda}{p(x)} \left|\dfrac{u(t,x)-P^{-1}\hat
				 u_\lambda(t,x)}{\lambda} \right|^{p(x)}   \,\d x \, \d t
   + \Phi(P^{-1} \hat u_\lambda),
\end{equation}
provided that $\varphi(\hat u_\lambda(\cdot)) \in L^1(0,T)$.

Note that, up to this point, the functions $u_\lambda$ and $\hat
 u_\lambda$ need not be related to each other. However, observing that
$$
Z_\Omega \left( \dfrac{\hat u_\lambda(t)-Pu(t)}{\lambda}\right)
= Z_\Omega \left( P\left(\dfrac{P^{-1}\hat
 u_\lambda-u}{\lambda}\right)(t)\right)
= P \left( Z_Q\left(\dfrac{P^{-1}\hat
 u_\lambda-u}{\lambda}\right)\right)(t),
$$
by virtue of \eqref{philla2} and Lemma~\ref{lemmaext}, we infer that
\begin{equation}\label{philla2b}
 Z_Q\left( \dfrac{P^{-1} \hat u_\lambda - u}{\lambda}\right) 
  + \partial_Q \Phi(P^{-1} \hat u_\lambda) \ni 0.
\end{equation}
Comparing this with \eqref{Philla2}, we deduce that $P^{-1} \hat u_\lambda$
coincides with $u_\lambda$, whence we also obtain
$\varphi(\hat u_\lambda(\cdot)) =
 \varphi(Pu_\lambda(\cdot)) \in L^1(0,T)$ and the thesis follows from
 \eqref{philla3}.
\end{proof}


\subsection{Proof of Proposition \ref{P:chain}}\label{Ss:chain-proof}

Now, we are in position to prove Proposition \ref{P:chain}.
We first claim that
\begin{equation}\label{ipepalla}
  \int_s^t \left\langle P\xi_\lambda(\tau), (Pu)'(\tau)
	   \right\rangle_V \,\d \tau
   = \varphi_\lambda(Pu(t)) - \varphi_\lambda(Pu(s))
   \quad\mbox{ for all } \  s,t\in[0,T]
\end{equation}
for any function $u\in\mathcal V$ satisfying $\partial_t u\in \mathcal
 V$ and $\xi_\lambda=\partial_Q \Phi_\lambda(u)\in \mathcal V^*$ (see \S
 \ref{Ss:YM}). Indeed, we deduce from Proposition \ref{P:ident} that 
$$
  Pu\in W^{1,p^-}(0,T;V);
$$
hence, $Pu$ is absolutely continuous with values in $V$.
Furthermore, since $A_\lambda := \partial_\Omega \varphi_\lambda$ is
bounded from $V$ to $V^*$ and $P\xi_\lambda(t) = A_\lambda(Pu(t))$ by
Lemma \ref{lemmayos}, we find that 
$P\xi_\lambda$ belongs to $L^\infty(0,T;V^*)$.    
Therefore, by using a standard chain-rule for subdifferentials, one can
 obtain \eqref{ipepalla}.

We next pass to the limit as $\lambda\searrow 0$ in \eqref{ipepalla}.
Concerning the  left-hand  side, we first give the following lemma:

\begin{lemma}\label{L:Alam-bdd}
Let $A : V \to V^*$ be a maximal monotone operator and let $A_\lambda$
 be the modified Yosida approximation of $A$. Then for any $[u,\eta] \in
 A$ and $\lambda > 0$, it follows that
$$
\int_\Omega \dfrac 1 {p'(x)} |A_\lambda u(x)|^{p'(x)} \d x
\leq \int_\Omega \dfrac 1 {p'(x)} |\eta(x)|^{p'(x)} \d x.
$$
An analogous statement also holds for maximal monotone operators $\mathcal
 A : \mathcal V \to \mathcal V^*$.
\end{lemma}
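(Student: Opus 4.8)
The plan is to derive the modular bound directly from the monotonicity of $A$, exploiting the fact that the modified Yosida approximation is built precisely so that the power $p'(x)$ of $|A_\lambda u|$ matches the power $p(x)$ of the increment $(u-J_\lambda u)/\lambda$. Writing $u_\lambda := J_\lambda u$ and setting $w := (u-u_\lambda)/\lambda$, the definition of $A_\lambda$ and the oddness of $Z_\Omega$ give $A_\lambda u = Z_\Omega(w) \in A(u_\lambda)$, while the given pair satisfies $\eta \in A(u)$. Since $A$ is monotone, I would test with $u_\lambda - u = -\lambda w$ to obtain
\[
0 \le \left\langle A_\lambda u - \eta, u_\lambda - u \right\rangle_V = -\lambda \left\langle Z_\Omega(w) - \eta, w \right\rangle_V,
\]
and, dividing by $\lambda > 0$, conclude $\left\langle Z_\Omega(w), w \right\rangle_V \le \left\langle \eta, w \right\rangle_V$.

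The next step is purely pointwise. By the definition \eqref{ZO} of $Z_\Omega$ one has $\left\langle Z_\Omega(w), w \right\rangle_V = \int_\Omega |w(x)|^{p(x)}\,\d x$, while $|A_\lambda u(x)| = |w(x)|^{p(x)-1}$ together with the identity $(p(x)-1)p'(x) = p(x)$ yields the crucial relation $|A_\lambda u(x)|^{p'(x)} = |w(x)|^{p(x)}$. To the right-hand side I would apply the pointwise Young inequality $\eta(x)w(x) \le |\eta(x)|\,|w(x)| \le \tfrac{1}{p'(x)}|\eta(x)|^{p'(x)} + \tfrac{1}{p(x)}|w(x)|^{p(x)}$ (the elementary form underlying \eqref{Young}) and integrate over $\Omega$, so that the inequality above becomes
\[
\int_\Omega |w(x)|^{p(x)}\,\d x \le \int_\Omega \frac{1}{p(x)}|w(x)|^{p(x)}\,\d x + \int_\Omega \frac{1}{p'(x)}|\eta(x)|^{p'(x)}\,\d x.
\]

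Finally I would absorb the first term on the right into the left by means of $1 - \tfrac{1}{p(x)} = \tfrac{1}{p'(x)}$, which gives $\int_\Omega \tfrac{1}{p'(x)}|w(x)|^{p(x)}\,\d x \le \int_\Omega \tfrac{1}{p'(x)}|\eta(x)|^{p'(x)}\,\d x$; rewriting $|w(x)|^{p(x)} = |A_\lambda u(x)|^{p'(x)}$ then produces exactly the claim. The statement for $\mathcal A : \mathcal V \to \mathcal V^*$ follows by repeating the same computation verbatim, with $Z_Q$ in place of $Z_\Omega$ (cf.~\eqref{ZQ}) and integration over $Q$ replacing integration over $\Omega$. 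I do not expect a serious obstacle here: the argument is a short computation, and its only delicate point is the bookkeeping of exponents. Indeed, it is precisely the nonstandard placement of $\lambda$ and the use of $Z_\Omega$ (rather than the duality map) in the definitions of $J_\lambda$ and $A_\lambda$ that make the powers of $w$ cancel cleanly, so that one obtains a \emph{modular} inequality rather than a norm estimate with an uncontrolled, $x$-dependent constant.
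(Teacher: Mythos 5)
Your proof is correct and follows essentially the same route as the paper's: both derive $\langle Z_\Omega(w),w\rangle_V \le \langle \eta,w\rangle_V$ from monotonicity of $A$ applied to the pairs $[J_\lambda u, A_\lambda u]$ and $[u,\eta]$, then use the pointwise Young inequality and the exponent identity $|A_\lambda u|^{p'(x)}=|w|^{p(x)}$ to absorb the $\tfrac{1}{p(x)}|w|^{p(x)}$ term (which is legitimate since $w\in V$ makes its modular finite). The extension to $\mathcal A$ on $\mathcal V$ is likewise handled as in the paper, by replacing $Z_\Omega$ with $Z_Q$ and integrating over $Q$.
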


In \S \ref{Ss:YM}, the notions of \emph{resolvent} and \emph{Yosida
approximation} (and hence, \emph{modified Moreau-Yosida regularization}
as well) were defined in such a way as to let this lemma hold true.
\begin{proof}
Let $[u,\eta] \in A$ and observe by the monotonicity of $A$ that
$0 \leq \langle \eta - A_\lambda(u), u - J_\lambda u \rangle_V$,
where $J_\lambda$ denotes the resolvent of $A$. By Young's inequality
 and the definition of $A_\lambda$,
 we have
\begin{align*}
\int_\Omega \left|\dfrac{u(x)-J_\lambda u(x)}{\lambda}\right|^{p(x)} \d x
&\leq \int_\Omega \eta(x) \dfrac{u(x)-J_\lambda u(x)}{\lambda} ~\d x\\
&\leq \int_\Omega \dfrac 1 {p'(x)} |\eta(x)|^{p'(x)} \d x
+ \int_\Omega \dfrac 1 {p(x)}
\left|\dfrac{u(x)-J_\lambda u(x)}{\lambda}\right|^{p(x)} \d x.
\end{align*}
Note that $|A_\lambda u(x)|^{p'(x)} = |Z_\Omega((u-J_\lambda
 u)/\lambda)(x)|^{p'(x)} = |(u(x)- (J_\lambda u)(x))/\lambda|^{p(x)}$. Then
 the desired inequality follows.
\end{proof}

By this lemma, we have
$$
 \iint_Q \dfrac 1 {p'(x)} |\xi_\lambda(x,t)|^{p'(x)} \d x \, \d t
 \leq  \iint_Q \dfrac 1 {p'(x)} |\xi(x,t)|^{p'(x)} \d x \, \d t,
$$
which along with Proposition \ref{P:px-est} implies the boundedness of
 $\xi_\lambda$ in $\mathcal V^*$.
Thus we deduce in particular that, for a subsequence
(not relabelled) of $\lambda\searrow 0$, 
$$
   \xi_\lambda \to \overline \xi \quad \mbox{weakly in }\, \mathcal V^*.
$$
This relation, together with the fact that $(Pu)' = P(\partial_t u)$ (see
   Proposition \ref{P:ident}), implies
\begin{align*}
\int^t_s \langle P \xi_\lambda(\tau), (Pu)'(\tau) \rangle_V \d \tau
&= \iint_{\Omega \times (s,t)} \xi_\lambda(x,\tau) \partial_t u(x,\tau) ~\d x \,
   \d \tau\\
&\to \iint_{\Omega \times (s,t)} \overline\xi(x,\tau) \partial_t u(x,\tau) ~\d x \, \d \tau
= \int^t_s \langle P \overline\xi(\tau), (Pu)'(\tau) \rangle_V \, \d \tau.
\end{align*}
Therefore, noting that $\varphi_\lambda(Pu(t)) \to \varphi(Pu(t))$ by
   \eqref{colla}, we have
$$
\varphi(Pu(t)) - \varphi(Pu(s)) 
=   \int_s^t \left\langle P\overline \xi(\tau), (Pu)'(\tau)
	   \right\rangle_V \,\d \tau
\quad
\mbox{ for all } \ 0 \leq s < t \leq T,
$$
which implies that $t \mapsto \varphi(Pu(t))$ is absolutely continuous on
   $[0,T]$, since $t \mapsto \langle P\overline \xi(t), (Pu)'(t)
  \rangle_V = \int_\Omega \overline\xi(x,t) \partial_t u(x,t) \d x$
   is integrable over $(0,T)$ by Fubini's lemma and the fact that
   $\overline\xi \partial_t u \in L^1(Q)$.

Now, let $t \in (0,T)$ be such that $Pu$ and $\varphi(Pu(\cdot))$ are
differentiable at $t$ and take $\eta \in \partial_\Omega \varphi(Pu(t))$
arbitrarily. Then by definition of subdifferential, we have, for $h > 0$,
$$
\dfrac{\varphi(Pu(t+h))-\varphi(Pu(t))}{h}
\geq \left\langle \eta , \dfrac{Pu(t+h)-Pu(t)}{h} \right\rangle_V.
$$
Taking the limit $h \to 0_+$ and using the differentiability of
$\varphi(Pu(\cdot))$, we obtain
$$
\dfrac{\d}{\d t} \varphi(Pu(t)) \geq \left\langle \eta, (Pu)'(t)
\right\rangle_V.
$$
The converse inequality also follows by choosing $h < 0$ and letting $h
\to 0_-$. Finally, we remark that $(Pu(t+h)-Pu(t))/h \to (Pu)'(t)$
strongly in $V$ for a.e.~$t\in(0,T)$, since $Pu$ belongs to
$W^{1,p^-}(0,T;V)$. In particular, substitute $\eta = P \xi(t) \in
\partial_\Omega \varphi(Pu(t))$ to get
$$
\varphi(Pu(t)) - \varphi(Pu(s))
= \int^t_s \left\langle P\xi(\tau), (Pu)'(\tau) \right\rangle_V \d \tau
= \iint_{\Omega \times(s,t)} \xi \partial_\tau u ~ \d x \, \d \tau.
$$
Thus we obtain the desired formula.

\subsection{Time-discretization}\label{Ss:time-disc}

We address system \eqref{pde}--\eqref{ic}
by means of the following time-discretization scheme:
\begin{equation}
 \partial_\Omega \psi \left( \dfrac{u_{n+1} - u_n}{h} \right)
 + \partial_\Omega \phi(u_{n+1}) = f_{n+1} \ \mbox{ in } V^*, \quad
 n = 0,1,\ldots, N-1,
\label{td}
\end{equation}
where $N \in \mathbb N$, $h := T/N$, $f_n \in V^*$ is given by
\begin{equation}\label{fn}
f_n := \dfrac 1 h \int^{t_n}_{t_{n-1}} Pf(\theta) \, \d \theta
= \dfrac 1 h \int^{t_n}_{t_{n_1}} f(\cdot,\theta) \, \d \theta
\end{equation}
 with $t_n := nh$ (hence $t_0 = 0$ and $t_N = T$) and the prescribed
 initial data $u_0$.
The existence of solutions $\{u_n\}_{n=1,2,\ldots, N}$ for \eqref{td}
can be proved at each step~$n$ by minimizing the functional
$J_n : V \to (-\infty,\infty]$ given by
$$
J_n(u) := h \psi \left( \dfrac{u-u_n}{h} \right)
+ \phi(u) - \langle f_{n+1}, u \rangle_V
\quad \mbox{ for } \ u \in V,
$$
which is convex and lower semicontinuous in $V$. Indeed, $J_n$ is
coercive in $V$ by the fact that
\begin{align*}
J_n(u) &\geq
h \psi \left( \dfrac{u-u_n}{h} \right) - C_n \|u\|_V\\
&= h \int_\Omega \dfrac{1}{p(x)} \left| \dfrac{u(x)-u_n(x)}{h} \right|^{p(x)} \d x
- C_n \|u\|_V\\
&\geq \dfrac{h}{p^+} \sigma_{p(\cdot)}^- \left( \left\|\dfrac{u-u_n}{h}\right\|_V \right)
- C_n \|u\|_V,
\end{align*}
which is coercive in $V$ by $p^- > 1$ for each fixed $n$.
Therefore, for each $n \in \{0,1,\ldots, N-1\}$, one can take a minimizer
$u_{n+1} \in D(\phi)$ of $J_n$. Hence, it holds that
$$
\partial_\Omega J_n(u_{n+1}) \ni 0 \ \mbox{ in } V^*.
$$
Since $D(\psi) = V$, by the sum rule for subdifferentials (see, e.g.,
~\cite{B76}), we have the representation formula
$$
\partial_\Omega J_n(u_{n+1}) =  \partial_\Omega \psi \left( \dfrac{u_{n+1} - u_n}{h} \right)
 + \partial_\Omega \phi(u_{n+1}) - f_{n+1}.
$$
Thus the minimizers $\{u_n\}_{n=1,2,\ldots,N}$ solve \eqref{td}.

We next introduce interpolants of the minimizers defined by
\begin{equation}\label{lint}
u_N(t) := \dfrac{t_{n+1}-t}{h} u_n + \dfrac{t-t_n}{h} u_{n+1}
\end{equation}
and
\begin{equation}\label{cint}
\overline u_N(t) \equiv u_{n+1}
\end{equation}
for $t \in (t_n,t_{n+1}]$ and $n = 0,1,\ldots, N-1$.
Then $u_N \in W^{1,\infty}(0,T;V)$ and $\overline u_N \in
L^\infty(0,T;V)$, and they satisfy
\begin{equation}\label{td2}
 \partial_\Omega \psi(u_N'(t)) + \partial_\Omega \phi(\overline u_N(t)) = \overline
  f_N(t)
  \ \mbox{ in }V^*, \quad \mbox{for a.e.~} \
  t \in (0,T), \quad u_N(0) = u_0,
\end{equation}
 where $\overline f_N$ is a piecewise constant interpolant of
$\{f_n\}_{n=0,1,\ldots,N}$ defined as in \eqref{cint}. 
Equivalently, by Proposition \ref{P:ident} and Lemma \ref{lemmaext},
\begin{equation}\label{td2-2}
 \partial_Q \Psi(\partial_t (P^{-1}u_N) ) + \partial_Q
  \Phi(P^{-1}\overline u_N) = P^{-1}\overline f_N
  \ \mbox{ in } \mathcal V^*, \quad u_N(0) = u_0.
\end{equation}
By convexity, we also find that $u_N(t)$ and $\overline u_N(t)$
belong to $D(\phi)$ for all $t \in [0,T]$.


\subsection{Lemmas for a priori estimates}\label{Ss:lemmas}

We first give a lemma on the boundedness and the convergence of the
piecewise constant interpolant $\overline f_N$ of
$\{f_n\}_{n=0,1,\ldots,N}$.
\begin{lemma}\label{L:bfN}
 It holds that
\begin{equation}\label{fN-est}
\iint_Q |P^{-1} \overline f_N|^{p'(x)} \d x \, \d t
\leq \iint_Q |f|^{p'(x)} \d x \, \d t.
\end{equation}
Moreover, $P^{-1} \overline f_N \to f$ strongly in $L^{p'(x)}(Q)$ as $N \to \infty$.
\end{lemma}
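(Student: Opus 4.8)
The plan is to observe that, as a function of the space--time variables, $P^{-1}\overline f_N$ is simply a Steklov-type time average of $f$, and to run the whole argument at the level of the modular $\rho(w):=\iint_Q|w(x,t)|^{p'(x)}\,\d x\,\d t$. Indeed, for $t\in(t_n,t_{n+1}]$ one has $\overline f_N(t)=f_{n+1}$, so by \eqref{fn}
$$
(P^{-1}\overline f_N)(x,t)=\frac1h\int_{t_n}^{t_{n+1}}f(x,\theta)\,\d\theta
\qquad\text{for a.e.\ }(x,t)\in\Omega\times(t_n,t_{n+1}].
$$
Since $1<(p^+)'\le p'(x)\le(p^-)'<\infty$, Proposition~\ref{P:px-est} guarantees that $\rho$ controls the $L^{p'(x)}(Q)$-norm, so that modular convergence and strong convergence are equivalent; I will therefore work exclusively with $\rho$.

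First I would prove \eqref{fN-est}. Freezing $x\in\Omega$ and $n$, the function $s\mapsto|s|^{p'(x)}$ is convex with $p'(x)\ge1$, so Jensen's inequality for the average over $(t_n,t_{n+1})$ gives
$$
\Big|\frac1h\int_{t_n}^{t_{n+1}}f(x,\theta)\,\d\theta\Big|^{p'(x)}
\le\frac1h\int_{t_n}^{t_{n+1}}|f(x,\theta)|^{p'(x)}\,\d\theta.
$$
Integrating this in $t$ over $(t_n,t_{n+1}]$, where the left-hand side is independent of $t$, produces the factor $h$ on both sides; summing over $n=0,\dots,N-1$ and integrating in $x$ over $\Omega$ then yields \eqref{fN-est}. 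The $x$-dependence of the exponent is harmless here, since $p'(x)$ enters as a fixed constant once $x$ is frozen and the averaging acts only in time; in particular \eqref{fN-est} shows $P^{-1}\overline f_N\in L^{p'(x)}(Q)$.

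For the convergence I would argue by density, replacing the (unavailable) triangle inequality for $\rho$ by the convexity bound $\rho(a+b+c)\le C\bigl(\rho(a)+\rho(b)+\rho(c)\bigr)$ with $C:=3^{(p^-)'-1}$. Given $\vep>0$, choose $g\in C(\overline Q)$ with $\rho(f-g)<\vep$, using the density of continuous functions in $L^{p'(x)}(Q)$ (valid since $p'(x)\le(p^-)'<\infty$). Let $\overline g_N$ be built from $g$ exactly as $\overline f_N$ from $f$. By linearity of the averaging and \eqref{fN-est} applied to $f-g$ one gets $\rho(P^{-1}\overline f_N-P^{-1}\overline g_N)\le\rho(f-g)<\vep$, whence
$$
\rho(P^{-1}\overline f_N-f)\le C\bigl(2\vep+\rho(P^{-1}\overline g_N-g)\bigr).
$$
For the continuous representative, uniform continuity of $g$ on the compact set $\overline Q$ gives $\|P^{-1}\overline g_N-g\|_{L^\infty(Q)}\to0$, and since $|Q|<\infty$ this forces $\rho(P^{-1}\overline g_N-g)\to0$; passing to the $\limsup$ in $N$ and then letting $\vep\searrow0$ yields $\rho(P^{-1}\overline f_N-f)\to0$, i.e.\ strong convergence in $L^{p'(x)}(Q)$.

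The estimate \eqref{fN-est} is a one-line Jensen inequality and presents no difficulty. The main (though only mildly delicate) point is the passage from the modular contraction \eqref{fN-est} to a genuine convergence statement: because $\rho$ is \emph{not} a norm, one cannot invoke the triangle inequality directly and must instead combine the convexity bound with the norm--modular equivalence of Proposition~\ref{P:px-est}; everything else reduces to a routine Steklov-averaging argument for the continuous approximant.
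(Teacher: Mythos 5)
Your proposal is correct and follows essentially the same route as the paper: Jensen's inequality in time yields the modular contraction \eqref{fN-est}, and the strong convergence is then obtained by an $\vep$-splitting through a smooth/continuous approximant of $f$, applying \eqref{fN-est} to the difference $f-f_\vep$ and using uniform continuity on $\overline Q$ for the approximant. The only difference is bookkeeping: the paper runs the $\vep/3$-argument directly in the $L^{p'(x)}(Q)$-norm (converting the modular bound \eqref{fN-est} into a norm bound via Proposition \ref{P:px-est}, and using H\"older's inequality with $\|1\|_{L^{p'(x)}(Q)}$ to pass from the sup-norm), whereas you stay at the modular level throughout, replacing the triangle inequality by the convexity bound with constant $3^{(p^-)'-1}$ and invoking Proposition \ref{P:px-est} only once at the end --- an equally valid, and arguably slightly cleaner, arrangement since \eqref{fN-est} is natively a modular inequality.
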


\begin{proof}
By using Jensen's inequality, we have, for all $t \in (t_{n-1}, t_n)$,
\begin{align*}
  |P^{-1} \overline f_N(x,t)|^{p'(x)} 
 &= \left| \dfrac 1 h \int^{t_n}_{t_{n-1}} f(x,t) \d t \right|^{p'(x)}
 \leq \dfrac 1 h \int^{t_n}_{t_{n-1}} |f(x,t)|^{p'(x)} \d t,
\end{align*}
which implies \eqref{fN-est}.
The convergence of $P^{-1}\overline f_N$ can be proved in a standard way
 (see, e.g.,~\cite[Lemma 8.7, p.~208]{Roubicek}, where a similar argument
 is performed in a Lebesgue-Bochner space setting). However, for the
 convenience of the reader, let us give a brief sketch of proof.
 Let $\vep > 0$ be any small number. Then one can take a smooth
 approximation $f_\vep \in C^\infty_0(Q)$ by $p(\cdot) \in \mathcal
 P(\Omega)$ and $p^+ < \infty$ (see~\cite[Theorem 3.4.12,
 p.~90]{DHHR11}) such that $\|f_\vep - f\|_{L^{p'(x)}(Q)} < \vep / 3$ and 
$\| P^{-1} \overline f_N - P^{-1} \overline{(f_\vep)}_N
 \|_{L^{p'(x)}(Q)} < \vep / 3$, where $\overline{(f_\vep)}_N \in
 L^\infty(0,T;V^*)$ denotes a piecewise constant interpolant for
 $f_\vep$ given in a similar way to $\overline f_N$ for $f$.
Indeed, the latter inequality can be checked from the former one by
 observing that, thanks to \eqref{fN-est},
$$
\iint_Q \left| P^{-1} \overline f_N - P^{-1} \overline{(f_\vep)}_N
 \right|^{p'(x)} \d x \, \d t = 
\iint_Q \left| P^{-1} \overline{(f - f_\vep)}_N \right|^{p'(x)} \d x \, \d t
\leq \iint_Q \left| f - f_\vep \right|^{p'(x)} \d x \, \d t
$$
and using Proposition \ref{P:px-est}.

On the other hand, since $f_\vep$ is uniformly continuous on $\overline
 Q$, there exists a modulus of continuity $\omega_\vep$ for $f_\vep$. 
 Then by Proposition \ref{P:Holder} it holds that
 \begin{align*}
 \left\|P^{-1}\overline{(f_\vep)}_N - f_\vep\right\|_{L^{p'(x)}(Q)} 
 &\leq 
  2 \| 1 \|_{L^{p'(x)}(Q)} \left\|P^{-1}\overline{(f_\vep)}_N - f_\vep\right\|_{L^\infty(Q)} \\
 &\leq 2 (|Q| + 1) \omega_\vep (h) \to 0
\end{align*}
 as $h \to 0$ (equivalently, $N \to \infty$). Here we also used
 $\| 1 \|_{L^{p'(x)}(Q)} \leq (|Q|+1)$. Actually,
 we have $\iint_Q (1/\lambda)^{p'(x)}
 \d x \, \d t \leq 1$ with $\lambda = |Q|+1$.
 Therefore, we can take $N_\vep \in \mathbb N$ such that
 $\|P^{-1}\overline{(f_\vep)}_N - f_\vep\|_{L^{p'(x)}(Q)} < \vep/3$ for
 any $N \geq N_\vep$. Consequently, it holds that $\|P^{-1}\overline f_N -
 f\|_{L^{p'(x)}(Q)} < \vep$ for any $N \geq N_\vep$.
\end{proof}

The following lemma provides continuous embeddings between variable
exponent Lebesgue spaces and Lebesgue-Bochner spaces through the
mappings $P, P^{-1}$.
\begin{lemma}\label{L:rel-sp}
The following {\rm (i)} and {\rm (ii)} are satisfied\/{\rm :}
\begin{enumerate}
 \item $\|P^{-1}u\|_{L^{p(x)}(Q)} \leq C \|u\|_{L^{p^+}(0,T;V)}$ \quad 
       for all \ $u \in L^{p^+}(0,T;V)$.
 \item $\|Pu\|_{L^{p^-}(0,T;V)} \leq C \|u\|_{L^{p(x)}(Q)}$ \quad
       for all \ $u \in L^{p(x)}(Q)$.
\end{enumerate}
\end{lemma}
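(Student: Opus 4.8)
The plan is to read both estimates as continuous embeddings and to exploit that $P$ and $P^{-1}$ are \emph{linear} while all the norms involved are positively $1$-homogeneous. Thus, replacing $u$ by $u/\|u\|$ (with $\|\cdot\|$ the relevant right-hand norm), it suffices in each case to \emph{normalize} the right-hand side to $1$ and to produce an absolute bound for the left-hand side. The only real tool needed is Proposition \ref{P:px-est}, which passes between the Luxemburg norm and the modular $\int|\cdot|^{p(x)}$; I shall apply it both on $\Omega$ (pointwise in $t$) and on the full cylinder $Q$. The interchange between the iterated integral $\int_0^T\!\int_\Omega$ and the double integral over $Q$ is justified throughout by Tonelli's theorem, the integrands being nonnegative and measurable.

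For (ii) I would start from $\|Pu\|_{L^{p^-}(0,T;V)}^{p^-}=\int_0^T\|Pu(t)\|_V^{p^-}\,\d t$, recalling that $Pu(t)=u(\cdot,t)$ and that $Pu\in L^{p^-}(0,T;V)$ by Proposition \ref{P:ident}(i), so $t\mapsto\|u(\cdot,t)\|_V$ is measurable. I then split $(0,T)$ into $E_1:=\{t:\|u(\cdot,t)\|_V\ge1\}$ and $E_2:=\{t:\|u(\cdot,t)\|_V<1\}$. On $E_1$ one has $\sigma^-_{p(\cdot)}(\|u(\cdot,t)\|_V)=\|u(\cdot,t)\|_V^{p^-}$, so Proposition \ref{P:px-est} on $\Omega$ gives $\|u(\cdot,t)\|_V^{p^-}\le\int_\Omega|u(x,t)|^{p(x)}\,\d x$; on $E_2$ one simply has $\|u(\cdot,t)\|_V^{p^-}\le1$. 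Integrating and using Tonelli, $\|Pu\|_{L^{p^-}(0,T;V)}^{p^-}\le\iint_Q|u|^{p(x)}\,\d x\,\d t+T$. Under the normalization $\|u\|_{\mathcal V}=1$, Proposition \ref{P:px-est} on $Q$ bounds the modular by $\sigma^+_{p(\cdot)}(1)=1$, whence $\|Pu\|_{L^{p^-}(0,T;V)}\le(1+T)^{1/p^-}$; rescaling yields (ii) with $C=(1+T)^{1/p^-}$.

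For (i) I would argue symmetrically on the modular of $P^{-1}u$. Since $u\in L^{p^+}(0,T;V)\subset L^1(0,T;L^1(\Omega))$ (as $\Omega$ is bounded and $T<\infty$), $P^{-1}u$ is a well-defined element of $L^1(Q)$ by Proposition \ref{P:ident}, with $P^{-1}u(x,t)=u(t)(x)$ a.e. By Tonelli, $\iint_Q|P^{-1}u|^{p(x)}\,\d x\,\d t=\int_0^T\big(\int_\Omega|u(t)(x)|^{p(x)}\,\d x\big)\,\d t$, and Proposition \ref{P:px-est} on $\Omega$ gives $\int_\Omega|u(t)(x)|^{p(x)}\,\d x\le\sigma^+_{p(\cdot)}(\|u(t)\|_V)=\max\{\|u(t)\|_V^{p^-},\|u(t)\|_V^{p^+}\}$. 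Splitting $(0,T)$ as above, bounding the $\max$ by $\|u(t)\|_V^{p^+}$ where $\|u(t)\|_V\ge1$ and by $1$ otherwise, I obtain $\iint_Q|P^{-1}u|^{p(x)}\le\|u\|_{L^{p^+}(0,T;V)}^{p^+}+T$. Under the normalization $\|u\|_{L^{p^+}(0,T;V)}=1$ this is at most $1+T$, so $\sigma^-_{p(\cdot)}(\|P^{-1}u\|_{\mathcal V})\le1+T$ forces $\|P^{-1}u\|_{\mathcal V}\le(1+T)^{1/p^-}$ (in particular $P^{-1}u\in\mathcal V$); rescaling gives (i) with the same constant.

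The computations are routine; the one point deserving care is that the modular is not homogeneous, so the two-sided bounds of Proposition \ref{P:px-est} involve the piecewise-power functions $\sigma^\pm_{p(\cdot)}$ and generate the additive constant $T$. This is precisely why the estimate cannot be run as a bare modular inequality but must be combined with the normalization/scaling reduction; managing this inhomogeneity, rather than any analytic difficulty, is the only real obstacle.
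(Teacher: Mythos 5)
Your proof is correct and follows essentially the same route as the paper's: both arguments reduce the norm estimates to modular estimates via Proposition \ref{P:px-est}, split the time interval according to whether the pointwise norm $\|u(\cdot,t)\|_V$ exceeds $1$ (producing the additive constant $T$), and use Fubini--Tonelli to pass between the double integral over $Q$ and the iterated integral. The only difference is that you make explicit the normalization/rescaling step needed to convert the inhomogeneous modular bound into a genuine norm inequality, a step the paper leaves implicit when it writes ``therefore (i) follows.''
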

\begin{proof}
By Proposition \ref{P:ident}, each $u \in L^{p^+}(0,T;V)
 \subset L^1(0,T;L^1(\Omega))$ has a unique representative $P^{-1} u \in
 L^1(Q)$. Then (i) follows. Indeed, for each $u \in L^{p^+}(0,T;V)$, by
 Fubini's lemma and Proposition \ref{P:px-est} we get
$$
\iint_Q |P^{-1}u(x,t)|^{p(x)} \d x \, \d t
\leq \int^T_0 \left( \|u(t)\|_V + 1\right)^{p^+} \d t,
$$
which implies $P^{-1}u \in L^{p(x)}(Q)$. Using Proposition \ref{P:px-est}
 again, we also note that
$$
\sigma_{p(\cdot)}^- \left( \|P^{-1}u\|_{L^{p(x)}(Q)} \right)
\leq
\iint_Q |P^{-1}u(x,t)|^{p(x)} \d x \, \d t.
$$
Therefore (i) follows.
As for (ii), see Proposition \ref{P:LB} for a proof.
\end{proof}

We next derive the boundedness of $\partial_Q \Psi : \mathcal V \to
\mathcal V^*$.

\begin{lemma}\label{L:dps-bdd}
 It holds that
\begin{equation}
 \left\|
  |v|^{p(x)-2} v
 \right\|_{\mathcal V^*} 
 \leq \left(
       \|v\|_{\mathcal V} + 1
      \right)^{p^+-1}
 \quad \mbox{ for all } \ v \in \mathcal V.
\end{equation}
\end{lemma}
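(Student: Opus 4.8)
The plan is to exploit the pointwise duality between the exponents $p(x)$ and $p'(x)$ together with the norm--modular estimate of Proposition \ref{P:px-est}, applied once on the primal side and once on the dual side. Writing $w := |v|^{p(x)-2}v = Z_Q v \in \mathcal V^*$, the first observation is that the two relevant modulars literally coincide: since $(p(x)-1)p'(x) = p(x)$ for a.e.~$(x,t)\in Q$, one has $|w(x,t)|^{p'(x)} = |v(x,t)|^{(p(x)-1)p'(x)} = |v(x,t)|^{p(x)}$, and hence
$$
\rho := \iint_Q |w(x,t)|^{p'(x)} \,\d x\,\d t = \iint_Q |v(x,t)|^{p(x)} \,\d x\,\d t .
$$

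Next I would feed this identity into Proposition \ref{P:px-est}, used on $Q$ for the two exponents $p(\cdot)$ and $p'(\cdot)$. On the primal side it gives the upper modular bound $\rho \leq \sigma^+_{p(\cdot)}(\|v\|_{\mathcal V}) = \max\{\|v\|_{\mathcal V}^{p^-}, \|v\|_{\mathcal V}^{p^+}\}$, which is in turn dominated by $(\|v\|_{\mathcal V}+1)^{p^+}$ for every value of $\|v\|_{\mathcal V}\geq 0$. On the dual side, noting that $p'$ attains its infimum where $p$ attains its supremum, so that $(p')^- = p^+/(p^+-1)$, the same proposition yields the lower modular bound $\sigma^-_{p'(\cdot)}(\|w\|_{\mathcal V^*}) \leq \rho$.

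It then remains to invert the function $\sigma^-_{p'(\cdot)}$, for which a short case distinction is convenient. If $\|w\|_{\mathcal V^*}\leq 1$, the conclusion is immediate, since the right-hand side $(\|v\|_{\mathcal V}+1)^{p^+-1}$ is always $\geq 1$ (because $p^+>1$). If instead $\|w\|_{\mathcal V^*}\geq 1$, then $\sigma^-_{p'(\cdot)}(\|w\|_{\mathcal V^*}) = \|w\|_{\mathcal V^*}^{(p')^-}$, so that combining the two modular estimates and raising to the power $1/(p')^- = (p^+-1)/p^+$ gives
$$
\|w\|_{\mathcal V^*} \leq \rho^{(p^+-1)/p^+} \leq \big((\|v\|_{\mathcal V}+1)^{p^+}\big)^{(p^+-1)/p^+} = (\|v\|_{\mathcal V}+1)^{p^+-1},
$$
which is exactly the asserted inequality.

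There is essentially no deep obstacle here; the only point requiring care is the bookkeeping of the dual exponent extremes---in particular the identity $(p')^- = p^+/(p^+-1)$ and the resulting exponent $1/(p')^- = (p^+-1)/p^+$---together with the observation that the awkward branch of $\sigma^-_{p'(\cdot)}$ (the one corresponding to $\|w\|_{\mathcal V^*}\leq 1$) is harmless because the target bound is automatically at least $1$. Everything else reduces to the pointwise identity $|w|^{p'(x)} = |v|^{p(x)}$ and two applications of Proposition \ref{P:px-est}.
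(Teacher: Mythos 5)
Your proof is correct, but it follows a different route from the paper's. You reduce everything to the pointwise identity $|w|^{p'(x)}=|v|^{p(x)}$ and then invoke Proposition \ref{P:px-est} twice --- the upper modular bound $\rho\le\sigma^+_{p(\cdot)}(\|v\|_{\mathcal V})\le(\|v\|_{\mathcal V}+1)^{p^+}$ on the primal side and the lower bound $\sigma^-_{p'(\cdot)}(\|w\|_{\mathcal V^*})\le\rho$ on the dual side --- after which you invert $\sigma^-_{p'(\cdot)}$ via a case distinction on $\|w\|_{\mathcal V^*}\lessgtr 1$ and the exponent identity $(p')^-=p^+/(p^+-1)$. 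The paper instead argues directly from the definition of the Luxemburg norm: it sets $\lambda:=(\|v\|_{\mathcal V}+1)^{p^+-1}$, observes the pointwise bound $\lambda^{p'(x)}=(\|v\|_{\mathcal V}+1)^{p'(x)(p^+-1)}\ge(\|v\|_{\mathcal V}+1)^{p(x)}\ge\|v\|_{\mathcal V}^{p(x)}$, and concludes that the modular of $w/\lambda$ is at most $\iint_Q|v/\|v\|_{\mathcal V}|^{p(x)}\,\d x\,\d t\le 1$, whence $\|w\|_{\mathcal V^*}\le\lambda$ by the unit-ball property. The paper's computation is a single step with no case analysis and no dual-exponent bookkeeping, whereas your argument is more systematic in that it only uses the stated norm--modular proposition as a black box; the price is the extra care with $(p')^-$ (note that your final bound only needs the easy inequality $(p')^-\ge p^+/(p^+-1)$, not the exact identity) and the implicit requirement $(p')^+<\infty$, i.e.\ $p^->1$, for the dual-side application of Proposition \ref{P:px-est} --- harmless here since the standing assumptions give $1<p^-\le p^+<\infty$. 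One cosmetic remark: both proofs tacitly assume $v\ne 0$ (the paper when dividing by $\|v\|_{\mathcal V}$, you when writing $\sigma^+_{p(\cdot)}(\|v\|_{\mathcal V})$ as a bound for $\rho$, which is fine even at $0$); the case $v=0$ is trivial either way.
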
 

\begin{proof}
Set $\lambda = (\|v\|_{\mathcal V} + 1)^{p^+-1} \geq 1$.
Let us estimate the following:
$$
\iint_Q \left| \dfrac{|v|^{p(x)-2} v}{\lambda} \right|^{p'(x)} \d x \, \d t
= \iint_Q \dfrac{|v|^{p(x)}}{\lambda^{p'(x)}} \, \d x \, \d t.
$$
Then we note  that 
$$
\lambda^{p'(x)} = (\|v\|_{\mathcal V} + 1)^{p'(x)(p^+-1)}
\geq (\|v\|_{\mathcal V} + 1)^{p(x)} 
\geq \|v\|_{\mathcal V}^{p(x)}. 
$$
Thus we get
$$
\iint_Q \left| \dfrac{|v|^{p(x)-2}v}{\lambda} \right|^{p'(x)} \d x \, \d t
\leq \iint_Q \left| \dfrac{v}{\|v\|_{\mathcal V}} 
\right|^{p(x)} 
\d x \, \d t \leq 1,
$$
which implies the assertion.
\end{proof}


\subsection{A priori estimates}\label{Ss:est}

We are now in position to derive a priori estimates.
Let us first test \eqref{td} by $(u_{n+1}-u_n)/h$ to get
\begin{align*}
 \int_\Omega \left| \dfrac{u_{n+1}-u_n}{h} \right|^{p(x)} \d x
 + \left\langle \partial_\Omega \phi(u_{n+1}), \dfrac{u_{n+1}-u_n}{h}
 \right\rangle_V = \left\langle f_{n+1} , \dfrac{u_{n+1}-u_n}{h}
 \right\rangle_V,
\end{align*}
which, together with Propositions \ref{P:Holder} and
\ref{P:px-est} and Inequality \eqref{Young}, implies 
\begin{align*}
\lefteqn{
\int_\Omega \left| \dfrac{u_{n+1}-u_n}{h} \right|^{p(x)} \d x
 + \dfrac{\phi(u_{n+1}) - \phi(u_n)}{h}}\\
&\leq C \int_\Omega |f_{n+1}|^{p'(x)} \d x
+ \dfrac 1 2 \int_\Omega \left| \dfrac{u_{n+1}-u_n}{h} \right|^{p(x)} \d x.
\end{align*}
Hence
\begin{equation}\label{ei01}
\dfrac 1 2 \int_\Omega \left| \dfrac{u_{n+1}-u_n}{h} \right|^{p(x)} \d x
 + \dfrac{\phi(u_{n+1}) - \phi(u_n)}{h}
\leq C \int_\Omega |f_{n+1}|^{p'(x)} \d x.
\end{equation}
Multiplying this by $h$ and summing it up from $n = 0$ to $m \in
\{0,1,\ldots,N-1\}$, we have
\begin{equation}\label{est0}
\dfrac 1 2 \sum_{n=0}^m h
\int_\Omega \left| \dfrac{u_{n+1}-u_n}{h} \right|^{p(x)} \d x
 + \phi(u_{m+1}) 
\leq \phi(u_0) 
+ C \iint_Q |P^{-1}\overline f_N|^{p'(x)} \d x \, \d t.
\end{equation}
Thus by Proposition \ref{P:ident} and Lemma \ref{L:bfN} we obtain
\begin{equation}\label{est1}
 \iint_Q \left| \partial_t (P^{-1}u_N) \right|^{p(x)} \d x \, \d t
 +  \sup_{t \in (0,T]} \phi(\overline u_N(t)) \leq C,
\end{equation}
which together with Proposition \ref{P:px-est} also gives
\begin{equation}\label{est2}
 \sup_{t \in (0,T]} |\overline u_N(t)|_X + \sup_{t \in [0,T]} |u_N(t)|_X
 \leq C.
\end{equation}
One can also deduce from \eqref{est1} and Proposition \ref{P:px-est} that
\begin{equation}\label{est3}
 \left\| \partial_t (P^{-1}u_N) \right\|_{\mathcal V} \leq C.
\end{equation}
Thus by \eqref{est3} along with Lemma \ref{L:dps-bdd}, one can get
\begin{equation}\label{est4}
 \left\| \partial_Q \Psi(\partial_t (P^{-1} u_N) ) \right\|_{\mathcal
  V^*} \leq C.
\end{equation}
By comparison of terms in \eqref{td2-2} together with the boundedness
of $P^{-1}\overline f_{N}$ in $\mathcal V^*$ (see Lemma \ref{L:bfN})
again, we get
\begin{equation}\label{est5}
 \left\| \partial_Q \Phi( P^{-1} \overline u_N ) \right\|_{\mathcal V^*} \leq C.
\end{equation}


\subsection{Convergence}\label{Ss:conv}

Recall that $X = W^{1,m(x)}_0(\Omega)$ is compactly embedded in
$V = L^{p(x)}(\Omega)$ by \eqref{H2}. Hence by virtue of
\eqref{est2} and \eqref{est3} together with Proposition \ref{P:ident}
(particularly, $\partial_t (P^{-1}u_N) = P^{-1}(u_N')$) and (ii) of
Lemma \ref{L:rel-sp},
Aubin-Lions's compactness lemma ensures the precompactness of $(u_N)$ in
$C([0,T];V)$. Thus we obtain
\begin{align}
 u_N \to u \quad &\mbox{ strongly in } C([0,T];V) \mbox{
 and weakly in } W^{1,p^-}(0,T;V), \label{c:du:Cp} \\
 & \mbox{ weakly star in } L^\infty(0,T;X),
\end{align}
with $u \in W^{1,p^-}(0,T;V) \cap L^\infty(0,T;X)$. Furthermore, for
each $t \in (t_n,t_{n+1})$, noting that
\begin{align*}
 \left| P^{-1}u_N(x,t) - P^{-1}\overline u_N(x,t) \right|
 &= \left|  \dfrac{t_{n+1}-t}{h} u_n(x) + \dfrac{t-t_n}{h} u_{n+1}(x) 
 - u_{n+1}(x) \right|\\
 &= (t_{n+1}-t) \left|  \dfrac{u_{n+1}(x)-u_n(x)}{h} \right|
 \leq h \left|  \dfrac{u_{n+1}(x)-u_n(x)}{h} \right|,
\end{align*}
one has
\begin{align*}
\int_\Omega |P^{-1}u_N(x,t) - P^{-1}\overline u_N(x,t)|^{p(x)} \d x
&\leq \int_\Omega h^{p(x)} \left|
\dfrac{u_{n+1}(x)-u_n(x)}{h}
\right|^{p(x)} \d x
\\
&\stackrel{\eqref{est0}}{\leq} 2 h^{p^- - 1} \left(
\phi(u_0) + C \iint_Q |f|^{p'(x)} \d x \, \d t
\right)
\\
& \to 0 \quad \mbox{ uniformly for } \ t \in (0,T)
\quad \mbox{ as } \ h \to 0.
\end{align*}
Here we also used Lemma \ref{L:bfN}.
Thus it follows that
$$
u_N - \overline u_N \to 0 \quad \mbox{ strongly in } L^\infty(0,T;V).
$$
Combining this relation with \eqref{c:du:Cp}, we obtain
\begin{equation}\label{c:u-bu}
 \overline u_N \to u \quad \mbox{ strongly in } L^\infty(0,T;V). 
\end{equation}
Furthermore, it also holds by \eqref{est2} that
\begin{equation}\label{c:bu}
  \overline u_N \to u \quad \mbox{ weakly star in } L^\infty(0,T;X),
\end{equation}
which, together with the fact that $u \in C([0,T];V)$ and $X
\hookrightarrow V$, implies $u \in C_w([0,T];X)$, the space of weakly
continuous functions with values in $X$.

By \eqref{c:u-bu} along with Lemma \ref{L:rel-sp}, we infer that
\begin{equation}\label{c:u-bu2}
 P^{-1}\overline u_N \to P^{-1}u \quad \mbox{ strongly in } \mathcal V. 
\end{equation}
We set
$$
\hat u := P^{-1}u \in \mathcal V.
$$
By \eqref{est3}--\eqref{est5},
one can also take $\xi, \eta \in \mathcal V^*$ such that
\begin{align}
 \partial_t (P^{-1}u_N) \to \partial_t \hat u \quad &\mbox{ weakly in
 } \mathcal V,
\label{c:du}\\
 \partial_Q \Phi(P^{-1}\overline u_N) \to \xi 
 \quad &\mbox{ weakly in } \mathcal V^*, \label{c:dPhi}\\
 \partial_Q \Psi(\partial_t (P^{-1}u_N)) \to \eta 
 \quad &\mbox{ weakly in } \mathcal V^*. \label{c:dPsi}
\end{align}
Hence, we have in particular $\partial_t \hat u \in \mathcal V$.
Moreover, thanks also to Lemma \ref{L:bfN}, 
we can take the limit $n \to \infty$ in \eqref{td2-2}
to obtain $\eta + \xi = f$ in $\mathcal V^*$.
By the maximal monotonicity in $\mathcal V \times \mathcal V^*$ of the
subdifferential operator $\partial_Q \Phi$, we derive $[\hat u,\xi] \in
\partial_Q \Phi$ from \eqref{c:dPhi} and \eqref{c:u-bu2}.

We finally claim that
\begin{equation}\label{cl1}
 [\partial_t \hat u,\eta] \in \partial_Q \Psi.
\end{equation}
We can prove this fact by using  the chain rule established in
Proposition \ref{P:chain} as well as  a monotonicity argument. 
Indeed,  a standard chain rule and  a simple calculation 
yield 
\begin{align*}
 \iint_Q \partial_Q \Psi \left(\partial_t (P^{-1}u_N)\right) 
 \partial_t (P^{-1}u_N)  ~ \d x \, \d t
 &\stackrel{\eqref{td2-2}}{=} \iint_Q \left( P^{-1} \overline f_N - \partial_Q
 \Phi(P^{-1}\overline u_N) \right) \partial_t (P^{-1}u_N) ~  \d x \, \d t\\
 &\leq \iint_Q \left(P^{-1} \overline f_N\right) \partial_t (P^{-1}u_N) ~ \d x \, \d t
 - \phi(u_N(T)) + \phi(u_0).
\end{align*}
Hence, by virtue of the strong convergence of $P^{-1}\overline f_N$ to $f$
in $\mathcal V^*$ (see Lemma \ref{L:bfN}), we infer  that 
\begin{align*}
\lefteqn{
 \limsup_{n \to \infty} 
 \iint_Q \partial_Q \Psi 
\left(\partial_t (P^{-1}u_N) \right) \partial_t (P^{-1} u_N) ~ \d x \, \d t 
}\\
&\leq
\iint_Q f \partial_t \hat u ~ \d x \, \d t - \phi(u(T)) + \phi(u_0),
\end{align*}
since $\liminf_{N \to \infty} \phi(u_N(T)) \geq \phi(u(T))$ by \eqref{c:du:Cp}.
Furthermore, recall that $\partial_t \hat u \in \mathcal V$, $\xi \in
\mathcal V^*$, $[\hat u,\xi] \in \partial_Q \Phi$ and exploit the chain
rule in Proposition \ref{P:chain} to deduce that $t \mapsto \phi(u(t))$
is absolutely continuous on $[0,T]$, and, moreover, it holds that
\begin{align*}
\limsup_{n \to \infty} 
 \left\langle \partial_Q \Psi \left(\partial_t (P^{-1}u_N)\right), \partial_t
(P^{-1} u_N) \right\rangle_{\mathcal V}
& \leq
\iint_Q f \partial_t \hat u ~ \d x \, \d t - \phi(P\hat u(T)) + \phi(P\hat u(0))\\
&= \left\langle f - \xi ,\partial_t \hat u \right\rangle_{\mathcal V}
 = \left\langle \eta, \partial_t \hat u \right\rangle_{\mathcal V},
\end{align*}
which together with the maximal monotonicity of $\partial_Q \Phi$ in
$\mathcal V \times \mathcal V^*$ implies that $[\partial_t \hat u, \eta] \in
\partial_Q \Phi$. Thus $\hat u$ is a strong solution of
\eqref{pde}--\eqref{ic}  (see \S \ref{Ss:space}).  This completes the proof.


\section{Proof of Theorem \ref{T:regu}}\label{S:th2}

In this section, we prove Theorem \ref{T:regu} under the additional
regularity assumption \eqref{f-regu}. To this end, we shall derive the
second energy inequality. Write \eqref{td} for the couple of indexes $n$
and $n-1$ (for $n \in \{1,2,\ldots,N-1\}$) and then take the
difference. It follows that
$$
\partial_\Omega \psi \left( \dfrac{u_{n+1}-u_n}{h}\right)
- \partial_\Omega \psi \left( \dfrac{u_n-u_{n-1}}{h}\right)
+ \partial_\Omega \phi(u_{n+1}) - \partial_\Omega \phi(u_n)
= f_{n+1} - f_n.
$$
By the monotonicity of $\partial_\Omega \phi$, the multiplication of the
above by $u_{n+1}-u_n$ yields
\begin{equation}
\left\langle \partial_\Omega \psi \left( \dfrac{u_{n+1}-u_n}{h}\right)
- \partial_\Omega \psi \left( \dfrac{u_n-u_{n-1}}{h}\right)
, u_{n+1} - u_n \right\rangle_V
 \leq \langle f_{n+1} - f_n , u_{n+1} - u_n \rangle_V.
\label{dexdu}
\end{equation}
Here the left-hand side can be transformed as follows:
\begin{align}
\lefteqn{
 \left\langle \partial_\Omega \psi \left( \dfrac{u_{n+1}-u_n}{h}\right)
- \partial_\Omega \psi \left( \dfrac{u_n-u_{n-1}}{h}\right)
, u_{n+1} - u_n \right\rangle_V
}\nonumber\\
&\geq h \left[ \psi^* \left(
 \partial_\Omega \psi \left( \dfrac{u_{n+1}-u_n}{h}\right)
\right)
-  \psi^* \left(
 \partial_\Omega \psi \left( \dfrac{u_n-u_{n-1}}{h}\right)
\right) \right],
\label{star-eq}
\end{align}
where we also used that $\partial_\Omega \psi^* = (\partial_\Omega \psi)^{-1}$.
Hence, multiplying \eqref{dexdu} by $(n-1)$ and using \eqref{star-eq}, we have
\begin{align*}
(n-1)h \left[ \psi^* \left(
 \partial_\Omega \psi \left( \dfrac{u_{n+1}-u_n}{h}\right)
\right)
-  \psi^* \left(
 \partial_\Omega \psi \left( \dfrac{u_n-u_{n-1}}{h}\right)
\right) \right]
\\
\leq (n-1) \langle f_{n+1} - f_n , u_{n+1} - u_n \rangle_V.
\end{align*}
Summing it up from $n = 2$ to $m \in \mathbb N$, we observe by a simple
calculation with $m \in \{3,4,\ldots, N-1\}$ that
\begin{align}
\lefteqn{
(m-1) h \psi^* \left(
 \partial_\Omega \psi \left( \dfrac{u_{m+1}-u_m}{h}\right)
\right)
} \nonumber \\
&\leq 
 \sum_{n=2}^m h \psi^* \left(
 \partial_\Omega \psi \left( \dfrac{u_n-u_{n-1}}{h}\right)
\right)
 + \sum_{n=2}^m (n-1)h^2 \left\langle 
 \dfrac{f_{n+1} - f_n}{h} , \dfrac{u_{n+1} - u_n}{h} \right\rangle_V.
\label{ee2}
\end{align}
Then, we have to control the right-hand side. 
Firstly we notice that
$$
 \sum_{n=2}^m h \psi^* \left(
 \partial_\Omega \psi \left( \dfrac{u_n-u_{n-1}}{h}\right)
\right)
\leq \int^T_0 \psi^*\left( \partial_\Omega \psi 
(u_N'(t)) \right) \d t \leq C
$$
from \eqref{est1} together with the fact that
$$
\psi^* \left( \partial_\Omega \psi(v) \right) 
= \int_\Omega \dfrac{1}{p'(x)} \left| |v(x)|^{p(x)-2}v(x) \right|^{p'(x)}
\d x
= \int_\Omega \dfrac{1}{p'(x)} |v(x)|^{p(x)} \d x
$$
for any $v \in V$. Moreover, the following lemma holds:
\begin{lemma}\label{L:e:sdf}
It holds that
\begin{align*}
\lefteqn{
\sum_{n=2}^m (n-1)h^2 \left\langle 
 \dfrac{f_{n+1} - f_n}{h} , \dfrac{u_{n+1} - u_n}{h} \right\rangle_V
}\\
&\leq 
\iint_Q \dfrac 2 {p'(x)} |t \partial_t f|^{p'(x)} \d x \,\d t
+ \iint_Q \dfrac 1 {p(x)} \left|\partial_t (P^{-1}u_N)\right|^{p(x)} \d x \, \d t.
\end{align*}
\end{lemma}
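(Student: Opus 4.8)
The plan is to peel off the weight $(n-1)h=t_{n-1}$ so as to produce the factor $t$, to split the resulting product by Young's inequality pointwise in $x$, and then to reassemble the two pieces as honest space--time integrals over $Q$. First I would simplify the left-hand side: since $(n-1)h^2\cdot\frac{f_{n+1}-f_n}{h}\cdot\frac{u_{n+1}-u_n}{h}=(n-1)(f_{n+1}-f_n)(u_{n+1}-u_n)$, the sum equals $\sum_{n=2}^m(n-1)\int_\Omega(f_{n+1}-f_n)(u_{n+1}-u_n)\,\d x$. Bounding the integrand by $(n-1)|f_{n+1}-f_n|\,|u_{n+1}-u_n|$, writing $|u_{n+1}-u_n|=h\,|\tfrac{u_{n+1}-u_n}{h}|$, and applying Young's inequality in the form $ab\le a^{p'(x)}/p'(x)+b^{p(x)}/p(x)$ with the \emph{balanced} choice $a=h^{1/p'(x)}|(n-1)(f_{n+1}-f_n)|$ and $b=h^{1/p(x)}|\tfrac{u_{n+1}-u_n}{h}|$ (so that $ab$ reproduces the integrand, using $1/p'(x)+1/p(x)=1$) gives, after summation over $n$ and integration over $\Omega$,
\[
S \le \sum_{n=2}^m \int_\Omega \frac{h}{p'(x)}\big|(n-1)(f_{n+1}-f_n)\big|^{p'(x)}\,\d x
 + \sum_{n=2}^m \int_\Omega \frac{h}{p(x)}\Big|\tfrac{u_{n+1}-u_n}{h}\Big|^{p(x)}\,\d x.
\]

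The second sum is handled at once: by Proposition~\ref{P:ident}, $\partial_t(P^{-1}u_N)=P^{-1}(u_N')$ equals $\tfrac{u_{n+1}-u_n}{h}$ on $(t_n,t_{n+1})$, so $\sum_{n=0}^{N-1}h\int_\Omega\frac{1}{p(x)}|\tfrac{u_{n+1}-u_n}{h}|^{p(x)}\,\d x=\iint_Q\frac1{p(x)}|\partial_t(P^{-1}u_N)|^{p(x)}\,\d x\,\d t$, and restricting to $n=2,\dots,m$ (with $m\le N-1$) only drops nonnegative terms. The heart of the matter is the $f$-sum. Using the definition \eqref{fn} together with a time shift, I would write $f_{n+1}-f_n=\frac1h\int_{t_{n-1}}^{t_n}\big(\int_\theta^{\theta+h}\partial_s f(x,s)\,\d s\big)\,\d\theta$; since every $s$ occurring here satisfies $s>\theta\ge t_{n-1}=(n-1)h$, the weight is absorbed through $(n-1)|\partial_s f(x,s)|\le\frac1h|s\,\partial_s f(x,s)|$, which is precisely where $t\,\partial_t f$ enters.

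Consequently $|(n-1)(f_{n+1}-f_n)(x)|$ is bounded by the mean of $|s\,\partial_s f(x,s)|$ over the triangular set $R_n=\{(\theta,s):t_{n-1}<\theta<t_n,\ \theta<s<\theta+h\}$ of measure $h^2$. Applying Jensen's inequality for $r\mapsto r^{p'(x)}$ \emph{at fixed $x$} (this is the genuinely variable-exponent step: the time averaging has to be carried out slice by slice in $x$, keeping the $\Omega$-integral outside), I obtain $h\,|(n-1)(f_{n+1}-f_n)|^{p'(x)}\le\frac1h\iint_{R_n}|s\,\partial_s f(x,s)|^{p'(x)}\,\d s\,\d\theta$. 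Summing over $n$ and invoking Fubini, each time level $s$ is charged with total $\theta$-measure at most $h$ (its $\theta$-section sits inside the length-$h$ interval $(s-h,s)$); the prefactor $\tfrac1h$ then cancels, leaving $\int_0^T|s\,\partial_s f(x,s)|^{p'(x)}\,\d s$. Weighting by $\frac{1}{p'(x)}$ and integrating over $\Omega$ yields the claim, the stated constant $2$ being reached with room to spare.

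The step I expect to be most delicate is the bookkeeping that ties the variable exponent to the discretization: the factor $h$ from $|u_{n+1}-u_n|=h|\tfrac{u_{n+1}-u_n}{h}|$ must be shared as $h^{1/p'(x)}h^{1/p(x)}$ so that \emph{both} resulting sums carry the single power of $h$ needed to converge to genuine integrals over $Q$, and the chain $(n-1)\mapsto s/h$, Jensen, Fubini must all be executed pointwise in $x$ because, in the variable-exponent setting, no time-Bochner norm is available to absorb these manipulations in one stroke. In the constant-exponent case all of this would collapse into a single application of H\"older's inequality in $L^{p'}(0,T;V^*)$.
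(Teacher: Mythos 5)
Your proof is correct and follows essentially the same route as the paper's: a pointwise Young's inequality with the $h$-splitting, identification of the $u$-sum with $\iint_Q \frac{1}{p(x)}|\partial_t(P^{-1}u_N)|^{p(x)}\,\d x\,\d t$, and then Jensen's inequality combined with the bound $(n-1)h\le s$ to convert the weighted difference quotients of $f$ into $t\,\partial_t f$. The only deviation is the final bookkeeping: the paper enlarges each integration region to the strip $((n-1)h,(n+1)h)$, whose pairwise overlaps produce the factor $2$, whereas your Fubini argument over the disjoint triangles $R_n$ charges each time level with total $\theta$-measure at most $h$ and thus yields the sharper constant $1$.
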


\begin{proof}
By a simple computation,
\begin{align*}
\lefteqn{
\sum_{n=2}^m (n-1)h^2 \left\langle 
 \dfrac{f_{n+1} - f_n}{h} , \dfrac{u_{n+1} - u_n}{h} \right\rangle_V
}\\
&\leq 
\sum_{n=2}^m h
\int_\Omega \dfrac 1  {p'(x)} \left| (n-1) \left(f_{n+1}(x)-f_n(x)\right)
 \right|^{p'(x)} \d x
+ \iint_Q \dfrac 1 {p(x)} \left| \partial_t (P^{-1}u_N)(x,t)
 \right|^{p(x)} \d x \, \d t
\end{align*}
for $t \in (t_n,t_{n+1})$.
Here we further observe, by Jensen's inequality, that
\begin{align}
\left| (n-1) \left(f_{n+1}(x)-f_n(x)\right)
 \right|^{p'(x)}
%
&= \left|
\dfrac{1}{h} \int^{t_{n+1}}_{t_n} (n-1)\left(
f(x,\theta) - f(x,\theta - h)
\right) \d \theta
\right|^{p'(x)}\nonumber\\
&\leq \dfrac 1 h 
\int^{t_{n+1}}_{t_n} \left|
(n-1)\left( f(x,\theta) - f(x,\theta - h) \right)
\right|^{p'(x)} \d \theta \nonumber\\
&= \dfrac 1 h 
\int^{t_{n+1}}_{t_n} \left|
\int^\theta_{\theta-h} (n-1) \partial_s f(x,s) \d s
\right|^{p'(x)} \d \theta \nonumber\\
&\leq \dfrac 1 {h^2} 
\int^{t_{n+1}}_{t_n} \int^\theta_{\theta-h} 
\left| (n-1) h \partial_s f(x,s) \right|^{p'(x)}
 \d s \, \d \theta.
\label{f-est}
\end{align}
Since $(n-1)h \leq s$ for any $s \in (\theta-h, \theta)$ and $\theta \in
 (t_n,t_{n+1})$, we infer that
\begin{align}
\dfrac 1 {h^2} 
\int^{t_{n+1}}_{t_n} \int^\theta_{\theta-h} 
\left| (n-1) h \partial_s f(x,s) \right|^{p'(x)}
 \d s \, \d \theta
&\leq
\dfrac 1 {h^2} 
\int^{t_{n+1}}_{t_n} \int^\theta_{\theta-h} 
\left| s \partial_s f(x,s) \right|^{p'(x)}
 \d s \, \d \theta \nonumber\\
&\leq \dfrac 1 h \int^{(n+1)h}_{(n-1)h} 
\left| s \partial_s f(x,s) \right|^{p'(x)}
 \d s.\label{df-est}
\end{align}
Collecting \eqref{f-est} and \eqref{df-est} and exploiting Fubini's
 lemma, we obtain
\begin{align*}
\lefteqn{
\sum_{n=2}^m h 
\int_\Omega \dfrac 1 {p'(x)} \left| (n-1) \left(f_{n+1}(x)-f_n(x)\right)
 \right|^{p'(x)} \d x}\\
&\leq
\sum_{n=2}^m  
\int_\Omega \dfrac 1 {p'(x)}
\left(
\int^{(n+1)h}_{(n-1)h} 
\left| s \partial_s f(x,s) \right|^{p'(x)}
  \d s
\right) \d x\\
&\leq
\iint_Q \dfrac 2 {p'(x)} \left| s \partial_s f(x,s) \right|^{p'(x)}
 \d x \, \d s.
\end{align*}
Thus the proof is completed.
\end{proof}

Let $\delta \in (0,T)$ be fixed. 
Then we claim that one can choose $N \in \mathbb N$ so large
(equivalently, $h>0$ so small) that
\begin{equation}\label{est-7}
\esssup_{t \in (\delta,T)} \psi^* \left(
\partial_\Omega \psi(u_N'(t))
\right) \leq C/\delta
\end{equation}
for some $C > 0$ independent of $N$ and $\delta$.
Indeed, we note that
\begin{align*}
 (m-1)h \psi^* \left( \partial_\Omega \psi 
\left( \dfrac{u_{m+1}-u_m}{h} \right) \right)
&\geq \dfrac{m-1}{m+1} t \psi^* \left( \partial_\Omega \psi
(u_N'(t)) \right)
\quad \mbox{ for all } \ t \in \left( mh, (m+1)h \right).
\end{align*}
Hence it follows from \eqref{ee2} together with Lemma \ref{L:e:sdf} and 
Proposition \ref{P:px-est} that
\begin{equation}\label{e:tpsi*}
 t \psi^* \left( \partial_\Omega \psi (u_N'(t)) \right)
\leq  \dfrac{m+1}{m-1} \ell
\left(
\|t \partial_t f\|_{\mathcal V^*}
+ \left\| \partial_t (P^{-1}u_N) \right\|_{\mathcal V}
\right)
\end{equation}
with a non-decreasing function $\ell$ in $\mathbb R$
for all $t \in \left( mh, (m+1)h \right)$ and $m \in \{3,4,\ldots, N-1\}$.

Now, let $\delta \in (0,T)$ be fixed. For $h \in (0,\delta/3]$, one can
take $m_\delta \in \{3,4,\ldots,N-1\}$ such that $m_\delta h \leq \delta
\leq (m_\delta + 1) h$, which implies
$$
\dfrac{\delta}{h} - 1 \leq m_\delta \leq \dfrac{\delta}{h}
\quad \mbox{ and } \quad
\dfrac{m_\delta+1}{m_\delta-1} \leq \dfrac{\delta/h + 1}{\delta/h - 2}
\to 1 \quad \mbox{ as } \ h \to 0.
$$
Moreover, due to the monotonicity of the function $r \mapsto
(r+1)/(r-1)$, we note that
$$
\dfrac{m+1}{m-1} \leq
\dfrac{m_\delta+1}{m_\delta-1}
\quad \mbox{ for any } \ m \geq m_\delta.
$$
Hence, observing that $m_\delta h \leq \delta$, we conclude that
\begin{align*}
\esssup_{t \in (\delta,T)} t \psi^* \left(\partial_\Omega \psi(u_N'(t)) \right)
&\leq \esssup_{t \in (m_\delta h,T)} t \psi^* \left(\partial_\Omega
\psi( u_N'(t) ) \right)
\\
&\stackrel{\eqref{e:tpsi*}}{\leq} 
2 \ell \left(
\|t \partial_t f\|_{\mathcal V^*}
 + \left\|\partial_t (P^{-1}u_N) \right\|_{\mathcal V}
\right)
\end{align*}
for $0 < h \ll 1$.
By \eqref{est3} and assumption \eqref{f-regu}, we obtain
\eqref{est-7}. Moreover, we also infer that
\begin{equation}
\sup_{t \in [\delta,T]} \left\| \partial_\Omega \psi \left( u_N'(t) \right)
 \right\|_{V^*} \leq \dfrac{C}{\delta}.\label{est8}
\end{equation}
By a comparison of terms in \eqref{td2} along with Lemma
\ref{L:tf-bdd} given below, we further get
\begin{equation}
 \sup_{t \in [\delta,T]} \left\| \partial_\Omega \phi \left( \overline u_N(t) \right) \right\|_{V^*}\label{est9}
\leq \dfrac{C}{\delta}.
\end{equation}

\begin{lemma}\label{L:tf-bdd}
 Let $f \in L^{p'(x)}(Q)$ and assume \eqref{f-regu}. Then it follows that
$$
\sup_{t \in [0,T]} \|t \overline f_N(t)\|_{V^*} 
\leq \ell \left( \iint_Q |f|^{p'(x)} \d x \, \d t
+ \iint_Q |t \partial_t f|^{p'(x)} \d x \, \d t \right)
$$
with a nondecreasing function $\ell$ in $\mathbb R$.
\end{lemma}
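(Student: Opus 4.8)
The plan is to reduce the vector-valued bound to a pointwise-in-$\Omega$ modular estimate and then to exploit the time-weighted function $\theta\mapsto\theta f(x,\theta)$. Since $\overline f_N(t)\equiv f_{n+1}$ on $(t_n,t_{n+1}]$ and, by Proposition \ref{P:px-est} applied in $V^*=L^{p'(x)}(\Omega)$, the norm $\|t\overline f_N(t)\|_{V^*}$ is controlled through the modular $\int_\Omega|t f_{n+1}(x)|^{p'(x)}\,\d x$ via the strictly increasing function $\sigma_{p'(\cdot)}^-$, it suffices to bound this modular by $C\big(\iint_Q|f|^{p'(x)}\,\d x\,\d t+\iint_Q|t\partial_t f|^{p'(x)}\,\d x\,\d t\big)$ uniformly in $n$, $N$ and $t\in(t_n,t_{n+1}]$.

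The heart of the matter is the pointwise estimate
$$
\theta\,|f(x,\theta)|\le F(x):=\int_0^T\big(|f(x,s)|+|s\,\partial_s f(x,s)|\big)\,\d s\qquad\text{for a.e.}\ x\in\Omega,\ \theta\in[0,T].
$$
To get it I set $g(x,\theta):=\theta f(x,\theta)$ and observe that, since both $f$ and $t\partial_t f$ lie in $L^{p'(x)}(Q)$, for a.e.~$x$ the slice $f(x,\cdot)$ belongs to $L^{p'(x)}(0,T)\subset L^1(0,T)$ (as $(0,T)$ has finite measure and $p'(x)\ge1$), so $\theta\mapsto g(x,\theta)$ is absolutely continuous with $\partial_\theta g=f+\theta\partial_\theta f\in L^1(0,T)$. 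The key (and most delicate) point is that $g(x,0)=0$: the limit $\lim_{s\to0^+}s f(x,s)$ exists by absolute continuity, and a nonzero value would force $|f(x,s)|\sim c/s$ near $0$, contradicting $\int_0^T|f(x,s)|^{p'(x)}\,\d s<\infty$ because $p'(x)>1$. Hence $g(x,\theta)=\int_0^\theta(f+s\partial_s f)\,\d s$, and bounding $|\partial_s g|\le|f|+|s\partial_s f|$ gives the displayed inequality.

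With this in hand, fix $t\in(t_n,t_{n+1}]$. For $n\ge1$ one has $t\le(n+1)h\le 2nh<2\theta$ for every $\theta\in(t_n,t_{n+1})$, whence
$$
|t\,f_{n+1}(x)|\le\frac1h\int_{t_n}^{t_{n+1}}t\,|f(x,\theta)|\,\d\theta\le\frac2h\int_{t_n}^{t_{n+1}}\theta\,|f(x,\theta)|\,\d\theta\le 2F(x);
$$
the interval $(0,h]$ (i.e.\ $n=0$) is even simpler, giving $|t f_1(x)|\le\int_0^h|f(x,\theta)|\,\d\theta\le F(x)$. Thus $|t\,\overline f_N(t)(x)|\le 2F(x)$ for all $t\in[0,T]$, uniformly in $N$.

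It then remains to estimate $\int_\Omega(2F)^{p'(x)}\,\d x$. By convexity of $r\mapsto r^{p'(x)}$ I split $F^{p'(x)}\le 2^{(p^-)'-1}\big((\int_0^T|f|\,\d s)^{p'(x)}+(\int_0^T|s\partial_s f|\,\d s)^{p'(x)}\big)$, and Jensen's inequality in time yields $(\int_0^T|g|\,\d s)^{p'(x)}\le T^{p'(x)-1}\int_0^T|g|^{p'(x)}\,\d s$; integrating over $\Omega$ and using Fubini's lemma bounds the modular by $C_T\big(\iint_Q|f|^{p'(x)}+\iint_Q|t\partial_t f|^{p'(x)}\big)$ with $C_T$ depending only on $T$ and $p^\pm$. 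Finally, inverting $\sigma_{p'(\cdot)}^-$ from Proposition \ref{P:px-est} and taking the supremum over $t$ gives the claim with $\ell(r):=(\sigma_{p'(\cdot)}^-)^{-1}(Cr)$, which is nondecreasing. I expect the main obstacle to be the rigorous justification of the fundamental theorem of calculus for $g(x,\cdot)$ and, in particular, of the vanishing boundary term $s f(x,s)\to0$ as $s\to0^+$; everything after that reduces to routine modular computations.
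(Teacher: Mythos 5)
Your proof is correct, and it rests on the same key identity as the paper's --- namely $\theta f(\cdot,\theta)=\int_0^\theta \partial_s\!\left(sf\right)\d s$ together with the vanishing of $sf$ at $s=0$ --- but you implement it at a different level. The paper argues vector-valuedly: by \eqref{f-regu} and Proposition \ref{P:ident}, $tPf\in W^{1,(p^+)'}(0,T;V^*)$, hence $tPf$ is $V^*$-continuous up to $t=0$; the limit $g_0$ must vanish since otherwise $\|Pf(t)\|_{V^*}\geq \frac{1}{2t}\|g_0\|_{V^*}$ would contradict $\|Pf(\cdot)\|_{V^*}\in L^1(0,T)$; the modular bound then comes from inserting the resulting identity into $f_{n+1}=\frac1h\int_{t_n}^{t_{n+1}}f(\cdot,\tau)\,\d\tau$ and applying Jensen's inequality twice, with the factor $t_{n+1}/\tau\le t_{n+1}/t_n\le 2$. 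You run the same scheme scalar-wise on a.e.\ slice $f(x,\cdot)$: absolute continuity of $\theta\mapsto\theta f(x,\theta)$, vanishing at $\theta=0$ by contradiction with $f(x,\cdot)\in L^1(0,T)$, the clean pointwise majorant $|t\,\overline f_N(t)(x)|\le 2F(x)$, and then routine modular estimates. The one step your route needs and the paper's avoids is the Fubini-type (ACL) slicing theorem guaranteeing that, for a.e.\ $x$, the distributional time derivative of the slice coincides with the slice of $\partial_t f$; you correctly flag this as the delicate point, and it does not follow from mere $L^1$-integrability of the slices as your wording suggests, but it is a standard fact (mollify in $t$ and use Fubini on $\Omega\times(\delta,T)$ for $\delta=1/k$), so this is a citation issue rather than a gap. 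What your route buys in exchange: the whole estimate reduces to domination by a single function $F\in L^{p'(x)}(\Omega)$, and your argument covers the first subinterval $t\in(0,h]$ explicitly (where only the trivial bound $t\le h$ is needed), whereas the paper's displayed constant $\sigma_{p(\cdot)}^+\!\left(t_{n+1}T/t_n\right)$ degenerates at $n=0$ and that case is left implicit there.
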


\begin{proof}
For $t \in [t_n, t_{n+1})$, we see by Jensen's inequality that
\begin{align*}
\lefteqn{
 \int_\Omega |t P^{-1} \overline f_N(t,x)|^{p'(x)} \d x
}\\
&\leq \int_\Omega |t_{n+1} f_{n+1}(x)|^{p'(x)} \d x\\
&= \int_\Omega \left| \dfrac{t_{n+1}}{h} \int^{t_{n+1}}_{t_n} f(x,\tau)\,
 \d \tau \right|^{p'(x)} \d x\\
&= \int_\Omega \left| \dfrac{t_{n+1}}{h} \int^{t_{n+1}}_{t_n} \dfrac 1 \tau 
 P^{-1} \left\{ \int^{\tau}_0 \dfrac{\d}{\d s} \left(s Pf \right) \d s
 \right\} \d \tau \right|^{p'(x)} \d x\\
&\leq \sigma_{p(\cdot)}^+\left(\dfrac{t_{n+1}}{t_n} T\right)
 \dfrac{2^{(p^-)'-1}}{T}
\iint_Q \left( |f(x,s)|^{p'(x)} + \left| s \partial_s f(x,s)
 \right|^{p'(x)} \right) \d x \, \d s.
\end{align*} 
Here we used the facts that $sPf(s)|_{s = 0} = 0$, which will be checked
 below, and that $\partial_s(sf) = f + s \partial_s f$. Consequently,
 one can derive the desired inequality by 
 noting that $t_{n+1}/t_n \leq 2$.

We finally prove that $tPf(t)|_{t = 0} = 0$. By virtue of \eqref{f-regu}, we
 find that $tPf$ belongs to $W^{1,(p^+)'}(0,T;V^*)$ by $\partial_t(tf) = f
 + t \partial_t f \in \mathcal V^*$. Hence
 $tPf$ is continuous with values in $V^*$ on $[0,T]$. In particular, $t
 Pf(t)$ converges to some $g_0 \in V^*$ strongly in $V^*$ as $t \to 0_+$.
 Hence,
$$
\left\|tPf(t)\right\|_{V^*} \geq \dfrac 1 2 \|g_0\|_{V^*}
\quad \mbox{ for } \ 0 < t \ll 1,
$$
which implies $\left\|Pf(t)\right\|_{V^*} \geq \dfrac 1 {2t}
 \|g_0\|_{V^*}$. Since $\|Pf(\cdot)\|_{V^*} \in L^1(0,T)$, we conclude
 that $g_0 = 0$.
\end{proof}

By \eqref{est8} and \eqref{est9}, for any $\delta \in (0,T)$,
up to a subsequence, it holds that
\begin{align}
 \partial_\Omega \phi(\overline u_N(t))|_{(\delta,T)} \to P\xi|_{(\delta,T)} 
 \quad &\mbox{ weakly star in } L^\infty(\delta,T; V^*),\\
 \partial_\Omega \psi(u_N'(t))|_{(\delta,T)} \to P\eta|_{(\delta,T)} 
 \quad &\mbox{ weakly star in } L^\infty(\delta,T; V^*).
\end{align}
In particular, $P\xi, P\eta \in L^\infty(\delta,T ; V^*)$ for any
$\delta \in (0,T)$. Moreover, \eqref{est-7} also yields  that  $u' \in
L^\infty(\delta,T;V)$ for any $\delta \in (0,T)$.


\appendix

\section{Identification between Lebesgue and Bochner spaces}\label{S:LB}

We often identify the Lebesgue-Bochner space $L^p(0,T;L^p(\Omega))$ with
the Lebesgue space $L^p(Q)$ for $Q = \Omega \times (0,T)$.
For instance, a function $u = u(x,t) \in
L^p(Q)$ with space-time variables $(x,t) \in Q = \Omega \times (0,T)$
corresponds to an $L^p(\Omega)$-valued function $Pu \in
L^p(0,T;L^p(\Omega))$ through the mapping $P : L^p(Q) \to
L^p(0,T;L^p(\Omega))$ given by
\begin{equation}\label{Apdx:P}
(Pu) (t) := u(\cdot,t). 
\end{equation}
This mapping is well-defined and turns out to be linear, bijective and
isometric (see Proposition \ref{P:LB}).
Conversely, one may expect that each $L^p(\Omega)$-valued function $u =
u(t) \in L^p(0,T;L^p(\Omega))$ could be identified with a function
$Mu \in L^p(Q)$, where $M$ is defined by the relation
\begin{equation}\label{Apdx:M}
(Mu)(x,t) := \left( u(t) \right)(x).
\end{equation}
However, checking that $M$ is well-defined with values in $L^p(Q)$
is somehow delicate due to the different \emph{measures} 
characterizing the domain and the target space of the map.
For instance, it is known that $L^\infty(Q)$ 
does not coincide with $L^\infty(0,T;L^\infty(\Omega))$
(see, e.g.,~\cite[Example 1.42, p.~24]{Roubicek}) because of the
difference between the \emph{Lebesgue measurability} of functions in $Q$
and the \emph{strong measurability} of $L^p(\Omega)$-valued functions
over $(0,T)$ in Bochner's sense  (and also a lack of Pettis' theorem
for $L^\infty(\Omega)$). 

On the other hand, for $1 \leq p < \infty$, 
the two classes of spaces can be rigorously identified.
In this section, we revise the measure-theoretic 
arguments leading to this identification and 
show that, with a limited effort, also the case of 
(measurable) variable exponents can be covered.
The exposition will mainly follow the lines of the 
standard theory of vector-valued $L^p$-spaces,
so we do not claim any particular originality here.
However, the extension to the variable exponent 
case, although not difficult, seems to be new
and this is the main reason why we decided to 
include this part. Throughout this section, we denote by 
$\mathcal L^N$ the $N$-dimensional
Lebesgue measure.
\begin{remark}\label{R:meas}
{\rm
To be precise, the \emph{equivalence} in Lebesgue space $L^p(Q)$ and
 Lebesgue-Bochner space $L^p(0,T;L^p(\Omega))$ is to be interpreted
 as follows:
\begin{enumerate}
 \item[(i)] The equivalence ``$u_1 = u_2$ in $L^p(Q)$'' means that there
       exists an $\mathcal L^{N+1}$-measurable subset $\hat Q$ of $Q$
       such that $u_1(x,t) = u_2(x,t)$ for all $(x,t) \in \hat Q$ and
       $\mathcal L^{N+1}(Q \setminus \hat Q) = 0$. 
 \item[(ii)] On the other hand,
       ``$u_1 = u_2$ in $L^p(0,T;L^p(\Omega))$'' has to be intended 
       as $u_1(t) = u_2(t)$ in $L^p(\Omega)$ 
       for all $t \in I$, where $I$ is an $\mathcal
       L^1$-measurable subset of $(0,T)$ satisfying $\mathcal
       L^1((0,T)\setminus I)=0$. Moreover, ``$u_1(t) =
       u_2(t)$ in $L^p(\Omega)$'' means that $(u_1(t))(x) = (u_2(t))(x)$
       for all $x \in \Omega_t$ with some $\mathcal L^N$-measurable
       subset $\Omega_t$ of $\Omega$ satisfying $\mathcal L^N(\Omega \setminus
       \Omega_t) = 0$.
\end{enumerate}
}
\end{remark}
We begin with the following lemma regarding
the behavior of the operator $M$ in the class of simple functions.
\begin{lemma}\label{L:simple}
 The operator $M$ given by \eqref{Apdx:M} is well-defined for the class
 of simple functions with values in $L^p(\Omega)$. Moreover, it holds
 that $M = P^{-1}$ in that class.
\end{lemma}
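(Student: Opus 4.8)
The plan is to work directly from the explicit form of a simple function and to reduce every assertion to the elementary joint measurability of products of the type $\chi_E(t)\,g(x)$. Write a generic $L^p(\Omega)$-valued simple function in canonical form as $u(t) = \sum_{i=1}^k \chi_{E_i}(t)\,g_i$ with pairwise disjoint $\mathcal L^1$-measurable sets $E_i \subset (0,T)$ and $g_i \in L^p(\Omega)$; then \eqref{Apdx:M} gives $(Mu)(x,t) = \sum_{i=1}^k \chi_{E_i}(t)\,g_i(x)$. Any other representation can be brought to this canonical form, so it suffices to treat this case. The four things to check are: that each summand is $\mathcal L^{N+1}$-measurable, that $Mu \in L^p(Q)$, that the resulting class does not depend on the chosen representatives, and that $M$ agrees with $P^{-1}$.

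First I would establish joint measurability, which is the genuinely delicate point. Fix a representative $g_i$ and choose a Borel function $\tilde g_i$ with $g_i = \tilde g_i$ off an $\mathcal L^N$-null set $A_i \subset \Omega$, together with a Borel set $\tilde E_i \subset (0,T)$ with $\mathcal L^1(E_i \triangle \tilde E_i)=0$. The product $(x,t)\mapsto \tilde g_i(x)\,\chi_{\tilde E_i}(t)$ is Borel on $Q$, while it differs from $g_i(x)\,\chi_{E_i}(t)$ only on $(A_i \times (0,T)) \cup (\Omega \times (E_i \triangle \tilde E_i))$, a set of $\mathcal L^{N+1}$-measure zero by Tonelli's theorem for the product measure (recall $|\Omega|<\infty$). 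Since $\mathcal L^{N+1}$ is the completion of $\mathcal L^N \otimes \mathcal L^1$, the original product is $\mathcal L^{N+1}$-measurable, hence so is the finite sum $Mu$. This Borel-representative argument is exactly the bridge between Lebesgue measurability on $Q$ and strong measurability on $(0,T)$ that Remark \ref{R:meas} warns about, and it is the template on which the later density argument will rest.

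Second I would obtain integrability, the isometry, and well-definedness in one stroke by a direct computation. Using the disjointness of the $E_i$ and Tonelli's theorem,
$$
\iint_Q |Mu|^{p}\,\d x\,\d t
= \sum_{i=1}^{k} \mathcal L^1(E_i)\,\|g_i\|_{L^p(\Omega)}^{p}
= \int_0^T \|u(t)\|_{L^p(\Omega)}^{p}\,\d t < \infty,
$$
so $Mu \in L^p(Q)$ with $\|Mu\|_{L^p(Q)} = \|u\|_{L^p(0,T;L^p(\Omega))}$. The same identity gives well-definedness: if two simple functions coincide in $L^p(0,T;L^p(\Omega))$, their difference is a simple function of vanishing Bochner norm, whence its $M$-image has vanishing $L^p(Q)$-norm, so $Mu$ depends only on the equivalence class of $u$ and not on the choice of the $g_i$ or of the representation.

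Finally, to identify $M$ with $P^{-1}$ on simple functions, I would apply $P$ to $w := Mu$ and invoke Proposition \ref{P:LB} for the bijectivity of $P$. For a.e.~fixed $t$ the slice $w(\cdot,t) = \sum_i \chi_{E_i}(t)\,g_i(\cdot)$ equals $u(t)$ as an element of $L^p(\Omega)$, so by \eqref{Apdx:P} we have $(Pw)(t) = w(\cdot,t) = u(t)$, i.e.~$P(Mu)=u$ in $L^p(0,T;L^p(\Omega))$; composing with $P^{-1}$ yields $Mu = P^{-1}u$. Everything here is a routine computation once the product structure is exploited, so the main obstacle remains the joint-measurability step of the second paragraph.
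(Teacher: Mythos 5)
Your proof is correct in substance and follows the same overall skeleton as the paper's argument (explicit product formula for $Mu$, joint measurability on $Q$, independence of the chosen representatives, inverse relation with $P$), but the two technical steps are executed by genuinely different means. For joint measurability, the paper writes the sublevel sets of $\hat v$ as finite unions of products of a Lebesgue-measurable subset of $\Omega$ with a time interval, whereas you replace each $g_i$ and $E_i$ by Borel representatives and invoke the completeness of $\mathcal L^{N+1}$; both are valid, and yours has the advantage of covering simple functions built over arbitrary $\mathcal L^1$-measurable sets $E_i$ rather than the subintervals appearing in the paper's proof. (Your parenthetical appeal to $|\Omega|<\infty$ is unnecessary and is better dropped, since the appendix explicitly allows unbounded $\Omega$ and $\mathcal L^N(\Omega)\cdot 0=0$ in any case.) For independence of representatives, the paper slices the agreement set $\hat Q$ and integrates $\mathcal L^N(Z_t)$ by Fubini's lemma, while you deduce it from the linearity of $M$ on simple functions combined with the norm identity $\|Mu\|_{L^p(Q)}=\|u\|_{L^p(0,T;L^p(\Omega))}$; this is cleaner, and it moreover establishes on simple functions the very isometry that Proposition \ref{P:LB}\,(ii) later extends by density.

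The one point you must repair is the final step, where you invoke ``Proposition \ref{P:LB} for the bijectivity of $P$'' and then ``compose with $P^{-1}$.'' Within the paper's logical order this is circular: the bijectivity of $P$ --- that is, part (ii) of Proposition \ref{P:LB}, which constructs $R=P^{-1}$ --- is itself proved \emph{using} Lemma \ref{L:simple} (the lemma is invoked there precisely to identify $Mu_n$ for the approximating simple functions). What saves your argument is that your computation $P(Mu)=u$ already carries the whole content of the assertion: it exhibits $Mu$ as a preimage of $u$ under $P$, and the uniqueness of this preimage requires only the \emph{injectivity} of $P$, which is part (i) of Proposition \ref{P:LB} and is proved independently of this lemma. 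So replace the appeal to bijectivity by an appeal to injectivity (or, equivalently, read the conclusion ``$M=P^{-1}$ on simple functions'' as the identity $P\circ M=\mathrm{id}$ there, which is exactly how the paper uses it); with that substitution your proof is complete and non-circular.
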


\begin{proof}
Let $v : (0,T) \to L^p(\Omega)$ be given by
\begin{equation}\label{simple}
v(t) = \sum_{j \in J} f_j \chi_{I_j}(t),
\end{equation}
with a finite set $J$, disjoint subintervals $\{I_j\}_{j \in J}$ each
with positive measure, characteristics functions $\chi_{I_j}$ over $I_j$,
 and $\{f_j\}_{j \in J} \subset L^p(\Omega) \setminus \{0\}$.
Define $\hat v : Q \to \mathbb R$ by
$$
\hat v(x,t) := \sum_{j \in J} f_j(x) \chi_{I_j}(t).
$$
Then $\hat v$ is Lebesgue-measurable over $Q$. Indeed, for any $a \in
 \mathbb R$, the set
$$
\{(x,t) \in Q \colon \hat v \leq a\}
= \bigcup_{j \in J} \{x \in \Omega \colon f_j(x) \leq a\}
\times I_j
$$
is Lebesgue-measurable in $Q$, since so are $\{x \in \Omega \colon f_j(x)
 \leq a\}$ and $I_j$ in $\Omega$ and $(0,T)$, respectively. 

Even if $v$ is identified with some other simple function $v_0 : (0,T)
 \to L^p(\Omega)$ as
 in (ii) of Remark \ref{R:meas}, $\hat v$ coincides with $\hat v_0 :=
 (v_0(t))(x)$ a.e.~in $Q$. Actually, by assumption $\hat v(x,t) = \hat
 v_0(x,t)$ for all $t \in I$ and $x \in \Omega_t$ with some $I \subset
 (0,T)$ and a family $\Omega_t \subset \Omega$ with full measure. Set
$\hat Q := \{(x,t) \in Q \colon \hat v(x,t) = \hat v_0(x,t) \}$. Since
 $\hat v$ and $\hat v_0$ are Lebesgue measurable, $\hat Q$ is $\mathcal
 L^{N+1}$-measurable. Set $Z_t := \{x \in \Omega \colon (x,t) \in Q
 \setminus \hat Q\}$ for each $t \in (0,T)$. Then for all $t \in I$, we
 find that $Z_t \subset \Omega \setminus \Omega_t$; hence $\mathcal
 L^N(Z_t) = 0$. Hence by Fubini's lemma, we see that
$$
\mathcal L^{N+1}(Q \setminus \hat Q)
= \int^T_0 \mathcal L^N(Z_t) \, \d t
\leq \int_I \mathcal L^N(Z_t) \, \d t
+ \mathcal L^1((0,T)\setminus I) \mathcal L^N(\Omega) = 0.
$$
Hence $M$ is well-defined, and from the definition
we immediately get  the relation  $M = P^{-1}$.
This completes the proof.
\end{proof}

The following proposition enables us to identify $L^p(0,T;L^p(\Omega))$
with $L^p(Q)$.

\begin{proposition}[Relations between Lebesgue and Bochner spaces]\label{P:LB}
Let $\Omega$ be a {\rm (}possibly unbounded and non-smooth{\rm )} domain
 of $\mathbb R^N$. Let $Q = \Omega \times (0,T)$ with $T > 0$. 
\begin{enumerate}
 \item If $p^+ < \infty$, then the mapping $P$ {\rm (}given as in
       \eqref{Apdx:P}{\rm )} is well-defined from $L^{p(x)}(Q)$ into
       $L^{p^-}(0,T;L^{p(x)}(\Omega))$. Moreover, $P$ is linear,
       injective and continuous,  that is, there exists  a
       constant $C \geq 0$ such that
       $$
       \|Pu\|_{L^{p^-}(0,T;L^{p(x)}(\Omega))} \leq C
       \|u\|_{L^{p(x)}(Q)}
       \quad \mbox{ for all } \ u \in L^{p(x)}(Q).
       $$
       In particular, if $p(x) \equiv p < \infty$, then $P : L^p(Q) \to
       L^p(0,T;L^p(\Omega))$ is isometric, i.e.,
       $\|Pu\|_{L^p(0,T;L^p(\Omega))} = \|u\|_{L^p(Q)}$ for all $u \in L^p(Q)$.
 \item For each $u \in L^p(0,T;L^p(\Omega))$, there exists a
       unique representative $\hat u \in L^p(Q)$ of $u$, i.e.,
       $\hat u$ is the unique function in $L^p(Q)$ such that $u = P
       \hat u$.
       Define a mapping ${R} : L^p(0,T;L^p(\Omega)) \to L^p(Q)$ by ${R}u :=
       \hat u$. Then ${R}$ is linear, bijective and isometric.
       Furthermore, $P({R}u) = u$ for all $u \in L^p(0,T;L^p(\Omega))$ and
       ${R}(Pu) = u$ for all $u \in L^p(Q)$\/{\rm ;} hence, $R = P^{-1}$.
       It also holds that
       $$
       u(t) = Ru(\cdot,t) \quad \mbox{ for a.e. } t \in (0,T)
       $$
       for every $u \in L^p(0,T;L^p(\Omega))$.
\end{enumerate}
\end{proposition}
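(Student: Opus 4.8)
The plan is to prove the two items separately, with the well-definedness and strong measurability of $P$ in item (i) being the heart of the matter.

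For item (i), I would first invoke Fubini's lemma: since $u\in L^{p(x)}(Q)$ is $\mathcal L^{N+1}$-measurable with $\iint_Q|u|^{p(x)}\,\d x\,\d t<\infty$, for a.e.\ $t\in(0,T)$ the section $u(\cdot,t)$ is $\mathcal L^N$-measurable and satisfies $\int_\Omega|u(x,t)|^{p(x)}\,\d x<\infty$, so that $Pu(t):=u(\cdot,t)\in L^{p(x)}(\Omega)$. The delicate point is that $Pu$ be \emph{strongly} (Bochner) measurable as an $L^{p(x)}(\Omega)$-valued map, rather than merely Lebesgue measurable on $Q$. To establish this I would rely on Pettis' lemma together with the separability of $L^{p(x)}(\Omega)$ (guaranteed by $p^+<\infty$): it suffices to check weak measurability, i.e.\ that $t\mapsto\langle\zeta,Pu(t)\rangle=\int_\Omega\zeta(x)\,u(x,t)\,\d x$ is measurable for each fixed $\zeta\in L^{p'(x)}(\Omega)=(L^{p(x)}(\Omega))^*$. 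This in turn follows from Fubini's lemma applied to the $\mathcal L^{N+1}$-measurable function $(x,t)\mapsto\zeta(x)u(x,t)$, which is integrable in $x$ for a.e.\ $t$ by Hölder's inequality (Proposition \ref{P:Holder}). Being separably valued and weakly measurable, $Pu$ is then strongly measurable.

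Once measurability is settled, the norm estimate follows from Proposition \ref{P:px-est} by a scaling argument. For a.e.\ $t$, that proposition gives $\sigma^-_{p(\cdot)}(\|Pu(t)\|_{L^{p(x)}(\Omega)})\le\int_\Omega|u(x,t)|^{p(x)}\,\d x$, whence $\|Pu(t)\|_{L^{p(x)}(\Omega)}^{p^-}\le 1+\int_\Omega|u(x,t)|^{p(x)}\,\d x$. Integrating over $(0,T)$ and using Fubini together with the fact (again Proposition \ref{P:px-est}) that $\|v\|_{L^{p(x)}(Q)}=1$ forces $\iint_Q|v|^{p(x)}\le 1$, applied to $v=u/\|u\|_{L^{p(x)}(Q)}$, I would obtain $\|Pu\|_{L^{p^-}(0,T;L^{p(x)}(\Omega))}\le(T+1)^{1/p^-}\|u\|_{L^{p(x)}(Q)}$, which yields continuity with $C=(T+1)^{1/p^-}$. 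Linearity is immediate, and injectivity follows because $Pu=0$ forces $u(\cdot,t)=0$ in $L^{p(x)}(\Omega)$ for a.e.\ $t$, hence $u=0$ in $L^{p(x)}(Q)$ by Fubini. In the constant-exponent case the same Fubini computation with $p^-=p^+=p$ gives the exact identity $\|Pu\|_{L^p(0,T;L^p(\Omega))}^p=\iint_Q|u|^p=\|u\|_{L^p(Q)}^p$, i.e.\ the isometry.

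For item (ii), in the constant-exponent setting, I would reconstruct $P^{-1}$ by density and completeness. Given $u\in L^p(0,T;L^p(\Omega))$, I choose $L^p(\Omega)$-valued simple functions $v_n\to u$ in $L^p(0,T;L^p(\Omega))$ (available from standard Bochner theory since $p<\infty$). By Lemma \ref{L:simple} each $Mv_n=:\hat v_n$ lies in $L^p(Q)$ and satisfies $P\hat v_n=v_n$; the isometry of item (i) then yields $\|\hat v_n-\hat v_m\|_{L^p(Q)}=\|v_n-v_m\|_{L^p(0,T;L^p(\Omega))}$, so $(\hat v_n)$ is Cauchy and converges to some $\hat u\in L^p(Q)$. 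Continuity of $P$ gives $P\hat u=\lim_n v_n=u$, and injectivity of $P$ gives uniqueness of such $\hat u$; setting $Ru:=\hat u$ defines a linear map. The relations $P(Ru)=u$ and $R(Pw)=w$ (the latter because $w$ is the unique representative of $Pw$) show that $R$ is a two-sided inverse of $P$, hence bijective, and the isometry is inherited from $P$. Finally, $P(Ru)=u$ unraveled means precisely $Ru(\cdot,t)=u(t)$ in $L^p(\Omega)$ for a.e.\ $t$, which is the last assertion. The main obstacle is the strong-measurability step in item (i): reconciling $\mathcal L^{N+1}$-measurability on $Q$ with Bochner strong measurability of the section map $t\mapsto u(\cdot,t)$. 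This is exactly where $p^+<\infty$ is indispensable, since it secures both the separability of $L^{p(x)}(\Omega)$ needed for Pettis' lemma and the density of simple functions used in item (ii); it is also the reason the identification fails for $p=\infty$, as recalled in the $L^\infty(Q)\neq L^\infty(0,T;L^\infty(\Omega))$ example.
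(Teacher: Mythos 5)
Your proposal is correct and follows essentially the same route as the paper's proof: Fubini's lemma for the sections, weak measurability plus Pettis' theorem and separability (from $p^+<\infty$) for strong measurability, Proposition \ref{P:px-est} for the norm bound in (i), and density of simple functions combined with Lemma \ref{L:simple} and completeness of $L^p(Q)$ for (ii). Your two small refinements --- making the continuity constant $C=(T+1)^{1/p^-}$ explicit by a scaling argument, and deducing $R(Pw)=w$ directly from injectivity of $P$ rather than by the paper's approximation argument --- are both valid and, if anything, slightly streamline the original.
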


\begin{proof}
 We first verify (i). Let $u \in L^{p(x)}(Q)$. 
 Since $u$ is measurable and $(x,t) \mapsto |u(x,t)|^{p(x)}$ is
 integrable over $Q$, by Fubini's lemma it holds that $u(\cdot,t) \in
 L^{p(x)}(\Omega)$ for a.e.~$t \in (0,T)$. 
 Let us claim that $u(\cdot,t)$ can be uniquely
 determined in the sense of (ii) of Remark
 \ref{R:meas}, although $u$ has to be intended as 
 an equivalence class of
 $L^{p(x)}(Q)$. Indeed, let $u_1, u_2 \in L^{p(x)}(Q)$ satisfy $u_1 =
 u_2$ in $L^{p(x)}(Q)$, that is, $u_1(x,t) = u_2(x,t)$ for all $(x,t)
 \in \hat Q$ with some subset $\hat Q$ of $Q$ satisfying $\mathcal
 L^{N+1}(Q \setminus \hat Q) = 0$. Set $Z_t := \{x \in \Omega \colon
 (x,t) \in Q \setminus \hat Q\}$ for each $t \in (0,T)$. Then by
 Fubini's lemma, $\mathcal L^N(Z_t) = 0$ for all $t \in I$ with a subset
 $I \subset (0,T)$ satisfying $\mathcal L^1((0,T)\setminus I) = 0$. Then
 $u_1(\cdot,t) = u_2(\cdot,t)$ a.e.~in $\Omega$ for all $t \in I$. 
 Thus $u(\cdot,t)$ is uniquely determined as a vector-valued function. 

 Now, let us define an $L^{p(x)}(\Omega)$-valued function $Pu : (0,T) \to
 L^{p(x)}(\Omega)$ by $(Pu)(t) := u(\cdot,t)$.
 We next verify the strong measurability of $Pu$ in $(0,T)$. 
For any $v \in L^{p'(x)}(\Omega) = (L^{p(x)}(\Omega))^*$,
it follows by Proposition \ref{P:Holder} that $uv \in
 L^1(Q)$. 
Therefore we observe by Fubini's lemma that 
$$
\langle v , (Pu)(t) \rangle_{L^{p(x)}(\Omega)} = 
\int_\Omega v(x) u(x,t) ~\d x \in L^1(0,T).
$$
Hence $Pu : (0,T) \to L^{p(x)}(\Omega)$ is weakly measurable 
thanks to the arbitrariness of $v$. Since $L^{p(x)}(\Omega)$ is
 separable by $p^+ < \infty$, Pettis' theorem ensures that $Pu$ is also
 strongly measurable. Moreover, by Proposition \ref{P:px-est} it follows
 that
$$
\int^T_0 \sigma_{p(\cdot)}^- \left(\|(Pu)(t)\|_{L^{p(x)}(\Omega)}\right) \d t 
\leq \int^T_0 \left(
\int_\Omega |u(x,t)|^{p(x)} \d x
\right) \d t
= \iint_Q |u(x,t)|^{p(x)} \d x \, \d t < \infty.
$$
Let us define the (measurable) set
$\mathcal T:= \big\{ t\in (0,T):~ \| Pu(t)\|_{L^{p(x)}(\Omega)}\le 1 \big\}$.
Then it follows that
\begin{align*}
 \int^T_0 \|Pu(t)\|_{L^{p(x)}(\Omega)}^{p^-} \d t
 &= \int_{\mathcal T} \|Pu(t)\|_{L^{p(x)}(\Omega)}^{p^-} \d t
 + \int_{(0,T)\setminus\mathcal T} \sigma_{p(\cdot)}^- \left(
 \|Pu(t)\|_{L^{p(x)}(\Omega)} \right) \d t \\ 
 &\leq T + \iint_Q |u(x,t)|^{p(x)} \d x \, \d t < \infty,
\end{align*}
which implies $Pu \in L^{p^-}(0,T;L^{p(x)}(\Omega))$.
Hence, we obtain that $P$ is a well-defined 
operator from $L^{p(x)}(Q)$ into $L^{p^-}(0,T;L^{p(x)}(\Omega))$. 

The linearity of $P$ follows immediately from its definition.
Let us next check the injectivity of $P$. Let $u_1, u_2 \in L^{p(x)}(Q)$
 satisfy $Pu_1 = Pu_2$. Then one can take $I \subset (0,T)$ and a
 family $(\Omega_t)_{t \in I}$ as in (ii) of Remark \ref{R:meas} such that
 $((Pu_1)(t))(x) = ((Pu_2)(t))(x)$ for all $(x,t) \in \{(x,t)
 \colon x \in \Omega_t, \ t \in I \}$. Since $( (Pu_1)(t) )(x) - (
 (Pu_2)(t) )(x) = u_1(x,t) - u_2(x,t)$ are Lebesgue measurable over $Q$,
 the set $\hat Q = \{(x,t) \in Q \colon u_1(x,t) = u_2(x,t) \}$ is
 $\mathcal L^{N+1}$-measurable. Thus we infer that $u_1(x,t) =
 u_2(x,t)$ for all $(x,t) \in \hat Q$; moreover, by Fubini's lemma,
 the measure $\mathcal L^{N+1}(Q \setminus \hat Q)$ of the complement is
 zero, since $\mathcal L^1((0,T)\setminus I) = 0$ and $\mathcal
 L^N(\Omega \setminus \Omega_t) = 0$ for all $t \in I$. Hence $P$ is
 injective.

In particular, if $p(x) \equiv p$, then, by Fubini's lemma, we see that
$$
\|u\|_{L^p(Q)}^p
= \iint_Q |u(x,t)|^p \d x \, \d t
= \int^T_0 \left( \int_\Omega  |u(x,t)|^p \d x \right) \d t
= \|Pu\|_{L^p(0,T;L^p(\Omega))}^p
$$
for all $u \in L^p(Q)$.

We next prove (ii). For each $u \in L^p(0,T;L^p(\Omega))$, one can take
 a sequence $(u_n)$ of $L^p(\Omega)$-valued simple functions such that
\begin{equation}\label{1}
u_n \to u \quad \mbox{ strongly in } L^p(0,T;L^p(\Omega)).
\end{equation}
Hence $(u_n)$ forms a Cauchy sequence in $L^p(0,T;L^p(\Omega))$.
Recalling Lemma \ref{L:simple} and using Fubini's lemma, we have
\begin{align*}
\|M u_n - M u_m\|_{L^p(Q)}^p
&= \iint_Q \left| (Mu_n)(x,t)-(Mu_m)(x,t) \right|^p \d x \, \d t\\
&= \int^T_0 \left( \int_\Omega \left|(Mu_n)(x,t)-(Mu_m)(x,t)
 \right|^p \d x \right) \d t\\
&= \|u_n - u_m\|_{L^p(0,T;L^p(\Omega))}^p \to 0
\quad \mbox{ as } \ n,m \to \infty.
\end{align*}
Thus $(Mu_n)$ forms a Cauchy sequence in $L^p(Q)$. Hence $Mu_n$
 converges to some element $\hat u$ strongly in $L^p(Q)$. Moreover, we
 obtain that $u = P\hat u$ in $L^p(0,T;L^p(\Omega))$ by observing that 
\begin{align*}
\lefteqn{
\left( \int^T_0 \left\|u(t)-(P\hat u)(t)\right\|_{L^p(\Omega)}^p \d
 t \right)^{1/p}
}\\
&\leq \left( \int^T_0 \|u(t)- u_n(t)\|_{L^p(\Omega)}^p \d t \right)^{1/p} 
+  \left( \int^T_0 \left\|u_n(t)-(P\hat u)(t)\right\|_{L^p(\Omega)}^p \d
 t \right)^{1/p}\\
&= \left( \int^T_0 \|u(t)- u_n(t)\|_{L^p(\Omega)}^p \d t \right)^{1/p} 
+  \left( \iint_Q |(Mu_n)(x,t)-\hat u(x,t)|^p \d x \, \d t \right)^{1/p} \to 0
\end{align*}
as $n \to \infty$. Here we used the fact that $u_n=P(Mu_n)$.
Thus we get $u = P \hat u$, which also implies
$$
\|\hat u\|_{L^p(Q)} \stackrel{\text{(i)}}{=} \|P \hat u\|_{L^p(0,T;L^p(\Omega))}
= \|u\|_{L^p(0,T;L^p(\Omega))}.
$$
Furthermore, let $\hat u_1, \hat u_2 \in L^p(Q)$ be representatives
 of $u \in L^p(0,T;L^p(\Omega))$, that is, $P\hat u_1 = u = P\hat u_2$
 in $L^p(0,T;L^p(\Omega))$. Then it follows from (i) that $\|\hat u_1 -
 \hat u_2\|_{L^p(Q)} = \|P\hat u_1 - P\hat u_2\|_{L^p(0,T;L^p(\Omega))}
 = 0$, which implies $\hat u_1 = \hat u_2$ a.e.~in $Q$. Hence the
 representative $\hat u \in L^p(Q)$ of $u$ is uniquely determined.

Thus one can define the mapping ${R} : L^p(0,T;L^p(\Omega)) \to L^p(Q)$ by
 setting ${R}u = \hat u$ for each $u \in L^p(0,T;L^p(\Omega))$. Then, from the
 above, one can immediately find that $P(Ru) = u$ for all $u \in
 L^p(0,T;L^p(\Omega))$ (hence, ${R}$ is injective), and ${R}$ is
 isometric, that is,
\begin{equation}\label{Q-isom}
\|{R}u\|_{L^p(Q)} = \|u\|_{L^p(0,T;L^p(\Omega))}
\quad \mbox{ for all } \ u \in L^p(0,T;L^p(\Omega)).
\end{equation}
We also claim that ${R}$ is linear. Indeed, for $u_1, u_2 \in
 L^p(0,T;L^p(\Omega))$ and $\alpha, \beta \in \mathbb R$, let us take
 simple functions $u_{1,n}, u_{2,n} : (0,T) \to L^p(\Omega)$ such that
 $u_{i,n} \to u_i$ strongly in $L^p(0,T;L^p(\Omega))$ for $i =
 1,2$. Then observing by the linearity of $M$ that 
$$
{R}(\alpha u_1 + \beta u_2) :=
 L^p(Q)\text{-}\lim \left( M(\alpha u_{1,n} + \beta u_{2,n}) \right)
 = \alpha {R} u_1 + \beta {R} u_2,
$$
we conclude that ${R}$ is linear. 
We next claim that ${R}(Pu) = u$ for all $u \in L^p(Q)$.
Indeed, let $u \in L^p(Q)$. Then $Pu$ belongs to $L^p(0,T;L^p(\Omega))$ by (i);
 hence, there exist simple functions $(Pu)_n$ such that $(Pu)_n \to
 Pu$ strongly in $L^p(0,T;L^p(\Omega))$. One can prove that ${R}((Pu)_n)
 = M((Pu)_n)$. We observe by (i) and Lemma \ref{L:simple} that
 $M((Pu)_n) \to u$ strongly in $L^p(Q)$. On the other hand,
 ${R}((Pu)_n)$ converges to ${R}(Pu)$ strongly in $L^p(0,T;L^p(\Omega))$
 by \eqref{Q-isom}. Thus we get ${R}(Pu) = u$ for all $u \in
 L^p(Q)$. Furthermore, the procedure also yields that ${R}$ is surjective. 
 Thus we conclude that $R$ and $P = R^{-1}$ are bijective.

 Finally, recall that $P(Ru) = u$ for all $u \in L^p(0,T;L^p(\Omega))$ to
 obtain $u(t) = P(Ru)(t) = R u(\cdot,t)$ for a.e.~$t \in (0,T)$. The
 proof is completed.
\end{proof}

The next proposition states, roughly speaking, that 
the operators $R, P$ behave well with respect to
differentiation in time.
\begin{proposition}\label{P:tD}
 Let $\Omega$ be a {\rm (}possibly unbounded and nonsmooth{\rm )} domain
 of $\mathbb R^N$ and let $T > 0$. Then, for every $u \in
 W^{1,p}(0,T;L^p(\Omega))$, we have
$$
 \partial_t (Ru) = R(u') \in L^{p}(Q).
$$
Moreover, if $u \in L^{p(x)}(Q)$ and $\partial_t u \in L^{p(x)}(Q)$, then
 $Pu \in W^{1,p^-}(0,T;L^{p(x)}(\Omega))$ and
$$
(Pu)' = P(\partial_t u) \in L^{p^-}(0,T;L^{p(x)}(\Omega)).
$$
\end{proposition}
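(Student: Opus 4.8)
The plan is to read both assertions as statements about the compatibility of the \emph{weak time-derivative of a Lebesgue function on $Q$} (tested against $\phi\in C^\infty_0(Q)$) with the \emph{weak (Bochner) derivative of a vector-valued function on $(0,T)$} (tested against scalar $\zeta\in C^\infty_0(0,T)$), the two being linked through the isometries $P$ and $R=P^{-1}$ of Proposition~\ref{P:LB} and, crucially, through Fubini's lemma. The bridge I would use throughout is the \emph{slicewise} characterization of a weak $t$-derivative: a function $g\in L^1_{\mathrm{loc}}(Q)$ is the weak $\partial_t$ of $v$ if and only if $v$ admits a representative that is absolutely continuous in $t$ for a.e.\ $x$, with a.e.\ derivative $g(x,\cdot)$. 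The only genuinely new point, relative to the classical constant-exponent theory surveyed in this appendix, is that for a merely measurable exponent one cannot invoke density of smooth functions in $L^{p'(x)}(\Omega)$; I would circumvent this by testing identities in $L^{p(x)}(\Omega)$ against characteristic functions $\chi_E$ of \emph{bounded} measurable sets $E\subset\Omega$, which belong to $L^{p'(x)}(\Omega)$ (as $|E|<\infty$) and separate the points of $L^{p(x)}(\Omega)$.

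For the first assertion I would fix the absolutely continuous representative of $u\in W^{1,p}(0,T;L^p(\Omega))$, so that $u(t)-u(s)=\int_s^t u'(\tau)\,\d\tau$ in $L^p(\Omega)$, and set $v:=Ru$ and $w:=R(u')\in L^p(Q)$, recalling from Proposition~\ref{P:LB} that $v(\cdot,t)=u(t)$ and $w(\cdot,\tau)=u'(\tau)$ for a.e.\ argument. Pairing the fundamental-theorem identity with $\chi_E$ and using Fubini to commute the $\chi_E$-evaluation with the Bochner integral yields $\int_E\big(v(x,t)-v(x,s)\big)\,\d x=\int_E\big(\int_s^t w(x,\tau)\,\d\tau\big)\d x$ for every bounded measurable $E$; since $E$ is arbitrary, $v(x,t)-v(x,s)=\int_s^t w(x,\tau)\,\d\tau$ for a.e.\ $x$ and all $s,t$ in a countable dense set, which then extends to all $s<t$ by continuity of the representative. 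This is precisely slicewise absolute continuity of $v$ with slice-derivative $w$, whence $\partial_t v=w=R(u')\in L^p(Q)$ in the weak sense on $Q$.

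For the second assertion I would argue in the reverse direction. By Proposition~\ref{P:LB}(i), $Pu$ and $P(\partial_t u)$ already belong to $L^{p^-}(0,T;L^{p(x)}(\Omega))$, so it only remains to identify $P(\partial_t u)$ as the Bochner derivative of $Pu$, i.e.\ to prove $\int_0^T (Pu)(t)\,\zeta'(t)\,\d t=-\int_0^T P(\partial_t u)(t)\,\zeta(t)\,\d t$ in $L^{p(x)}(\Omega)$ for every $\zeta\in C^\infty_0(0,T)$. I would test this $L^{p(x)}(\Omega)$-identity against $\chi_E$, with $E\subset\Omega$ bounded measurable. The hypothesis $\partial_t u\in L^{p(x)}(Q)\subset L^1_{\mathrm{loc}}(Q)$ gives, by the slicewise characterization above, a representative with $t\mapsto u(x,t)$ absolutely continuous for a.e.\ $x$ and $\partial_t u(x,\cdot)$ its derivative; an ordinary one-dimensional integration by parts in $t$ (the boundary terms vanishing since $\zeta$ is compactly supported), followed by integration in $x\in E$ and Fubini, furnishes exactly the $\chi_E$-paired identity. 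As the $\chi_E$ separate points of $L^{p(x)}(\Omega)$, the Bochner identity follows and hence $Pu\in W^{1,p^-}(0,T;L^{p(x)}(\Omega))$ with $(Pu)'=P(\partial_t u)$.

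The routine integrability checks (that $u\chi_E\zeta'$ and $(\partial_t u)\chi_E\zeta$ lie in $L^1(Q)$, so that Fubini applies, which holds because $L^{p(x)}\hookrightarrow L^1$ on the finite-measure set $E\times(0,T)$ by Proposition~\ref{P:Holder}, and that $\int_0^T|\partial_t u(x,t)|\,\d t<\infty$ for a.e.\ $x$) I would dispatch quickly. The step I expect to be the main obstacle — and the one meriting care — is the commutation of Bochner integration with the evaluation functional $\langle\,\cdot\,,\chi_E\rangle$ together with the slicewise absolute-continuity reduction: this is exactly where the two incompatible notions of \emph{measure} and \emph{measurability} on $Q$ and on $(0,T;L^{p(x)}(\Omega))$ must be reconciled, and where Fubini's lemma and the choice of characteristic test functions (rather than smooth ones, unavailable for measurable exponents) do all the work.
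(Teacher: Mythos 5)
Your proof is correct, but it takes a genuinely different route from the paper's. The paper never leaves the level of weak formulations: for the first claim it pairs the vector-valued identity $\int_0^T u'(t)\rho(t)\,\d t=-\int_0^T u(t)\rho'(t)\,\d t$ with test functions of the form $(P\phi)(t)$, $\phi\in C^\infty_0(Q)$, and uses $(P\phi)'=P(\partial_t\phi)$ together with Proposition~\ref{P:LB} to rewrite both sides as integrals over $Q$; for the second claim it tests the integration-by-parts identity on $Q$ against split test functions $\rho(t)w(x)$ and then removes $w$ by the density of $C^\infty_0(\Omega)$ in $L^{p'(x)}(\Omega)$ (Theorem 3.4.12 of \cite{DHHR11}). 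You instead descend to the pointwise level---the fundamental theorem of calculus in $W^{1,p}(0,T;L^p(\Omega))$ and the slicewise (ACL-type) characterization of the weak $t$-derivative---and test against characteristic functions $\chi_E$ of bounded measurable sets, which lie in $L^{p'(x)}(\Omega)$ and separate points of $L^{p(x)}(\Omega)$. What your route buys: it bypasses the density theorem entirely, and in particular it still covers the borderline case $p^-=1$ formally allowed by Proposition~\ref{P:ident}, where $(p')^+=\infty$ and the density of $C^\infty_0(\Omega)$ in $L^{p'(x)}(\Omega)$ invoked by the paper genuinely fails. What it costs: your bridge lemma (weak $\partial_t v=g\in L^1_{\mathrm{loc}}(Q)$ if and only if $v$ admits a representative absolutely continuous on a.e.\ line $\{x\}\times(0,T)$ with slice derivative $g(x,\cdot)$) is itself a nontrivial measure-theoretic fact; in an appendix whose entire purpose is care about exactly such identifications, it should be proved (mollification in the $t$-variable alone suffices) or precisely cited, not merely asserted---it carries essentially all the weight in both halves of your argument.

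Two smaller points, neither of which invalidates the proof. First, your stated motivation---that density of smooth functions is ``unavailable for merely measurable exponents''---is inaccurate: density of $C^\infty_0(\Omega)$ in $L^{q(x)}(\Omega)$ requires no regularity of $q$ beyond measurability, only $q^+<\infty$; this is exactly what the paper invokes, and the only true obstruction is $q^+=\infty$, i.e.\ $p^-=1$ in the case at hand. Second, in the first half the $\chi_E$-identity is available for pairs $(s,t)$ taken from the full-measure set $G$ of times at which $Ru(\cdot,t)=u(t)$ a.e.\ in $\Omega$; rather than passing through a countable dense set of times and ``extending by continuity'' (the slice $v(x,\cdot)$ is not yet known to be continuous, so this step as written is loose), it is cleaner to fix $t_0\in G$, set $\tilde v(x,t):=v(x,t_0)+\int_{t_0}^t w(x,\tau)\,\d\tau$, note that for every $t\in G$ one has $v(\cdot,t)=\tilde v(\cdot,t)$ a.e.\ in $\Omega$, and conclude $v=\tilde v$ a.e.\ in $Q$ by Fubini; $\tilde v$ is then the desired slicewise absolutely continuous representative.
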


\begin{proof}
 Let $u \in W^{1,p}(0,T;L^p(\Omega))$. Then $u' \in
 L^p(0,T;L^p(\Omega))$ and 
$$
\int^T_0 u'(t) \rho(t) \, \d t = - \int^T_0 u(t) \rho'(t) \, \d t
\quad \forall \rho \in C^\infty_0(0,T)
$$
(see, e.g.,~\cite{B76} for vector-valued distributions).
Moreover, for any $\phi \in C^\infty_0(Q)$, we see that
$$
\int^T_0 \left\langle (P\phi)(t) , u'(t) \right\rangle_{L^p(\Omega)} \d t 
= - \int^T_0 \left\langle (P\phi)'(t) , u(t) \right\rangle_{L^p(\Omega)} \d t.
$$
Since $u$ and $u'$ belong to $L^p(0,T;L^p(\Omega))$, 
 using
 Proposition~\ref{P:LB} and the fact that $(P\phi)' = P(\partial_t
 \phi)$ (indeed, one can prove this just by integration by parts), 
 the above equality can be rewritten as
$$
\iint_Q \phi R(u') ~\d x \, \d t 
= - \iint_Q (Ru) \partial_t \phi ~\d x \, \d t
\quad \forall \phi \in C^\infty_0(Q),
$$
which implies $\partial_t (Ru) = R(u') \in L^p(Q)$.

If $u \in L^{p(x)}(Q)$ and $\partial_t u \in L^{p(x)}(Q)$, then
by standard integration by parts in Sobolev spaces we have
$$
\iint_Q \phi \partial_t u ~ \d x \, \d t
= - \iint_Q u \partial_t \phi~\d x \, \d t
\quad \mbox{ for all } \ \phi \in C^\infty_0(Q).
$$
Let $\rho \in C^\infty_0(0,T)$ and $w \in C^\infty_0(\Omega)$. Then,
 since $\phi(x,t) = \rho(t)w(x) \in C^\infty_0(Q)$, using Fubini's lemma
 we see that
\begin{align*}
\left\langle w, \int^T_0 Pu(t) \rho'(t) \d t \right\rangle_{L^{p(x)}(\Omega)}
&= \int^T_0 \langle \rho'(t) w , Pu(t) \rangle_{L^{p(x)}(\Omega)} \, \d t\\
&= \iint_Q \rho'(t) w(x) u(x,t)  ~\d x \, \d t\\
&= - \iint_Q \rho(t) w(x) \partial_t u(x,t)  ~\d x \, \d t\\
&= - \left\langle w, \int^T_0 \rho(t) P(\partial_t u)(t) \, \d t
 \right\rangle_{L^{p(x)}(\Omega)}.
\end{align*}
From the fact that $C^\infty_0(\Omega)$ is dense in $L^{p'(x)}(\Omega)$
 (see Theorem 3.4.12 of~\cite{DHHR11}), it follows that
$$
\int^T_0 Pu(t) \rho'(t) \, \d t
= - \int^T_0 \rho(t) P(\partial_t u)(t) \, \d t.
$$
Thus $(Pu)' = P(\partial_t u)$.
\end{proof}



\section{Lower semicontinuity of $\Phi$ on $L^{p(x)}(Q)$}
\label{app:B}

Let $u_n \in \mathcal V = L^{p(x)}(Q)$ be such that $\Phi(u_n) \leq a$
with an arbitrary $a \in \mathbb R$ and $u_n \to u$ strongly in $\mathcal
V$. Then by Fatou's lemma along with the fact that $\sup_{n \in \mathbb N}
\Phi(u_n) \leq a$, 
we see that $\liminf_{n \to \infty} \phi(Pu_n(\cdot)) \in L^1(0,T)$ and
\begin{equation}\label{fubini}
\int^T_0 \liminf_{n \to \infty} \phi(Pu_n(t)) ~\d t
\leq \liminf_{n \to \infty} \int^T_0 \phi(Pu_n(t)) ~\d t \leq a.
\end{equation}
By (ii) of Lemma \ref{L:rel-sp}, $Pu_n \to Pu$ strongly in
$L^1(0,T;V)$. Hence it follows from the lower semicontinuity of $\phi$
in $V$ that
\begin{equation}\label{lsc1}
\phi(Pu(t)) \leq \liminf_{n \to \infty} \phi(Pu_n(t)) < \infty
\quad \mbox{ for a.e. } t \in (0,T),
\end{equation}
which particularly means that $Pu(t) \in D(\phi)$ for a.e.~$t \in
(0,T)$.

On the other hand, we claim that $\phi(Pu(\cdot))$ is Lebesgue
measurable in $(0,T)$. Indeed, since $Pu$ is strongly measurable with
values in $V$ in $(0,T)$, one can take step functions $s_n : (0,T) \to
V$ such that $s_n (t) \to Pu(t)$ strongly in $V$ for a.e.~$t \in
(0,T)$. Let $\phi_\lambda$ be the Moreau-Yosida regularization of $\phi$
in $V$ (in a standard sense). Then $\phi_\lambda(s_n(\cdot))$ is also a
step function in $\Omega$; moreover, $\phi_\lambda(s_n(t)) \to
\phi_\lambda(Pu(t))$ for a.e.~$t \in (0,T)$. Hence
$\phi_\lambda(Pu(\cdot))$ is Lebesgue measurable in
$(0,T)$. Furthermore, since $\phi_\lambda(Pu(t)) \to \phi(Pu(t)) <
\infty$ for a.e.~$t \in (0,T)$, we deduce that $\phi(Pu(\cdot))$ is also
measurable.

Recall \eqref{lsc1} and the measurability of $\phi(Pu(\cdot))$ to get
$$
\int^T_0 \phi(Pu(t)) ~\d t \leq \int^T_0 \liminf_{n \to \infty}
\phi(Pu_n(t)) ~\d t.
$$
Thus it follows from \eqref{fubini} that
$$
\Phi(u) = \int^T_0 \phi(Pu(t)) ~\d t \leq \liminf_{n \to \infty}
\int^T_0 \phi(Pu_n(t)) ~\d t \leq a.
$$
Therefore, $\Phi$ is lower semicontinuous in $\mathcal V$. \qed


%


\begin{thebibliography}{99}
 \bibitem{AiziYan} Aizicovici, S.~and Yan, Q.,
	 \newblock{\rm Convergence theorems for abstract doubly
	 nonlinear differential equations},
	 \newblock{\rm Panamer.~Math.~J.} {\bf 7} (1997), 1--17.
 \bibitem{AM13} Akagi, G. and Matsuura, K.,
	 \newblock{\rm Nonlinear diffusion equations driven by the
	 $p(\cdot)$-Laplacian},
	 \newblock{\rm Nonlinear Differential Equations and Applications
	 (NoDEA)} {\bf 20} (2013), 37--64.
 \bibitem{GO3} Akagi, G.~and \^Otani, M.,
         \newblock {\rm Time-dependent constraint problems arising from
		 macroscopic critical-state models for type-II superconductivity
		 and their approximations},
         \newblock {\rm Adv.\ Math.\ Sci. Appl.} {\bf 14} (2004), 683--712.
\bibitem{GradFlow} Ambrosio, L., Gigli, N.~and Savar\'e, G.,
	\newblock {\it Gradient flows in metric spaces and in the space
	of probability measures}, 
	\newblock{Lectures in Mathematics ETH Z\"urich},
	Birkh\"auser Verlag, Basel, 2005. 
 \bibitem{Arai} Arai, T.,
	 \newblock {\rm  On the existence of the solution for 
	 $\partial \phi(u'(t)) + \partial \psi(u(t)) \ni f(t)$},
	 \newblock {J.~Fac.~Sci.~Univ.~Tokyo Sec.~IA Math.} {\bf 26}  
	 (1979), 75--96.
 \bibitem{AsFrK} Aso, M., Fr\'emond, M.~and  Kenmochi, N.,
	 \newblock {\rm Phase change problems with temperature dependent
	 constraints for the volume fraction velocities},
	 \newblock {Nonlinear Anal.} {\bf 60} (2005), 1003--1023. 
 \bibitem{B75} Barbu, V.,
	 \newblock {\rm Existence theorems for a class of two point
	 boundary problems},
	 \newblock {J.~Differential Equations} {\bf 17} (1975), 236--257.
\bibitem{B76} Barbu, V.,
	\newblock{\it Nonlinear semigroups and differential equations in
	Banach spaces},
	\newblock{Noordhoff, Leyden}, 1976.
 \bibitem{BlaDam} Blanchard, D., Damlamian, A.~and Ghidouche, H.,
	 \newblock{A nonlinear system for phase change with
	 dissipation},
	 \newblock{Differential Integral Equations} {\bf 2} (1989),
	 344--362. 
 \bibitem{HB1} Br\'ezis, H.,
         \newblock {\it Op\'erateurs maximaux monotones et s\'emi-groupes de 
         contractions dans les \'espaces de Hilbert},   
         \newblock {Math Studies, Vol.5, North-Holland, Amsterdam/New 
         York,} 1973.
 \bibitem{Colli} Colli, P.,
	 \newblock {\rm On some doubly nonlinear evolution equations in
	 Banach spaces},
	 \newblock {\rm Japan J.~Indust.~Appl.~Math.} {\bf 9} (1992), 181--203.
 \bibitem{CV} Colli, P.~and Visintin, A.,
	 \newblock {\rm On a class of doubly nonlinear evolution
	 equations},
	 \newblock {\rm Comm.~Partial Differential Equations} {\bf 15}
	 (1990), 737--756.
 \bibitem{DHHR11} Diening, L., Harjulehto, P., H\"ast\"o, P.~and
	 R\r{u}\v{z}i\v{c}ka, M.,
	 \newblock{\it Lebesgue and Sobolev spaces with variable
	 exponents,} Lecture Notes in Mathematics, vol.~2017,
	 Springer-Verlag, Heidelberg, 2011.
 \bibitem{FZ01} Fan, X.~and Zhao, D.,
	 \newblock{On the spaces $L^{p(x)}(\Omega)$ and
	 $W^{m,p(x)}(\Omega)$},
	 \newblock{J.~Math.~Anal.~Appl.} {\bf 263} (2001), 424--446.
 \bibitem{KS} Kurata, K.~and Shioji, N.,
	 \newblock{Compact embedding from $W^{1,2}_0(\Omega)$ to
	 $L^{q(x)}(\Omega)$ and its application to nonlinear elliptic
	 boundary value problem with variable critical exponent},
	 \newblock{J.~Math.~Anal.~Appl.} {\bf 339} (2008), 1386--1394.
 \bibitem{MiTh} Mielke, A.~and Theil, F.,
	 \newblock{\rm On rate-independent hysteresis models},
	 \newblock{\rm  NoDEA, Nonlinear Differential Equations Appl.} 
	 {\bf 11} (2004), 151--189.
 \bibitem{MiRo} Mielke, A.~and Rossi, R.,
	 \newblock{\rm Existence and uniqueness results for a class of
	 rate-independent hysteresis problems},
	 \newblock{\rm Math.~Models Methods Appl.~Sci.} 
	 {\bf 17} (2007), 81--123.
 \bibitem{MiRoSa} Mielke, A., Rossi, R.~and Savar\'e, G.,
	 \newblock{Nonsmooth analysis of doubly nonlinear evolution
	 equations},
	 \newblock{Calc.~Var.~Partial Differential Equations}
	 {\bf 46} (2013), 253--310.
 \bibitem{MOSS} Mizuta, Y., Ohno, T., Shimomura T.~and Shioji, N.,
	 \newblock{Compact embeddings for Sobolev spaces of variable
	 exponents and existence of solutions for nonlinear elliptic
	 problems involving the $p(x)$-Laplacian and its critical
	 exponent},
	 \newblock{Ann.~Acad.~Sci.~Fenn.~Math.} {\bf 35} (2010), 115--130. 
 \bibitem{Musielak} Musielak, J.,
	 \newblock{\it Orlicz spaces and modular spaces,} 
	 \newblock{Lecture Notes in Mathematics},
	 vol.~1034. Springer-Verlag, Berlin, 1983.
 \bibitem{Roubicek} Roub\'i\v cek, T.,
         \newblock {\it Nonlinear partial differential equations with
         applications},
         \newblock {International Series of Numerical Mathematics},
         Vol.~153. Birkh\"auser Verlag, Basel, 2005.
 \bibitem{SSS} Schimperna, G., Segatti, A.~and Stefanelli, U.,
	 \newblock{\rm Well-posedness and long-time behavior for a class
	 of doubly nonlinear equations},
	 \newblock{\rm Discrete Contin.~Dyn.~Syst.}
	 {\bf 18} (2007), 15--38.
 \bibitem{Segatti} Segatti, A.,
	 \newblock {\rm Global attractor for a class of doubly nonlinear
	 abstract evolution equations},
	 \newblock {\rm Discrete Contin.~Dyn.~Syst.} {\bf 14} (2006),
	 801--820.  
 \bibitem{Senba} Senba, T.,
	 \newblock {\rm  On some nonlinear evolution equation},
	 \newblock {\rm Funkcial Ekvac.} {\bf 29}  
	 (1986), 243--257.
 \bibitem{S-ken} Shirakawa, K.,
	 \newblock {\rm Large time behavior for doubly nonlinear systems
	 generated by subdifferentials},
	 \newblock {\rm Adv.\ Math.\ Sci.\ Appl.} {\bf 10} (2000), 417--442.
\bibitem{Visintin96} Visintin, A.,
	\newblock{\em Models of phase transitions}, 
	Progress in Nonlinear Differential Equations and Their
	Applications, vol.~28, Birkh\"auser, 1996.
\end{thebibliography}
\end{document}